\documentclass[10pt, a4paper, reqno]{amsart}

\usepackage[T1]{fontenc}
\usepackage{lmodern}
\usepackage[english]{babel}
\usepackage{mathrsfs}
\usepackage{booktabs}
\usepackage{float}
\usepackage{enumitem}
\usepackage[dvipsnames]{xcolor}
\usepackage{tikz,pgf}
\usetikzlibrary{calc}
\usepackage[all]{xy}
\usepackage{multirow}
\usepackage{array}
\usepackage{amssymb}
\usepackage{algorithm}
\usepackage{algpseudocode} 
\usepackage{mathdots}
\usepackage{yhmath}
\usepackage{calc}
\usepackage{hyperref}
\hypersetup{
    colorlinks,
    linkcolor={red!50!black},
    citecolor={blue!50!black},
    urlcolor={blue!80!black}
}
\usepackage{cleveref}

\linespread{1.2}
\setlength{\parindent}{0pt}
\setlength{\parskip}{.25em}

\algrenewcommand\algorithmicwhile{\textbf{While}}
\algrenewcommand\algorithmicfor{\textbf{For}}
\algrenewcommand\algorithmicdo{\textbf{Do}}
\algrenewcommand\algorithmicif{\textbf{If}}
\algrenewcommand\algorithmicthen{\textbf{Then}}
\algrenewcommand\algorithmicelse{\textbf{Else}}
\algrenewcommand\algorithmicend{\textbf{End}}
\algrenewcommand\algorithmicreturn{\textbf{Return}}

\theoremstyle{plain}
\newtheorem{lemma}{Lemma}[section]
\newtheorem{proposition}[lemma]{\textbf{Proposition}}
\newtheorem{theorem}[lemma]{\textbf{Theorem}}
\newtheorem{corollary}[lemma]{\textbf{Corollary}}

\theoremstyle{definition}
\newtheorem{definition}[lemma]{\textbf{Definition}}
\newtheorem{example}[lemma]{\textbf{Example}}
\newtheorem*{notation}{\textbf{Notation}}
\newtheorem{remark}[lemma]{Remark}
\newtheoremstyle{instyle}{}{}{}{}{}{}{0pt}{{}}
\theoremstyle{instyle}

\title{On the existence of paradoxical motions of generically rigid graphs on the sphere}
\thanks{The final version of this work was published on SIAM J. Discrete Math., 35(1), 325--361, \href{https://doi.org/10.1137/19M1289467}{DOI:10.1137/19M1289467}.}

\author[M. Gallet]{%
Matteo Gallet$^{\ast, \circ,\diamond}$}
\author[G. Grasegger]{%
Georg Grasegger$^{\ast,\triangleright}$}
\author[J. Legersk\'y]{%
Jan Legersk\'y$^{\dagger}$}
\author[J. Schicho]{%
Josef Schicho$^{\ast, \circ}$}
\thanks{$^\ast$ Supported by the Austrian Science Fund (FWF): W1214-N15, 
 Project DK9.} 
\thanks{$^\circ$ Supported by the Austrian Science Fund (FWF): P31061.}
\thanks{$^\diamond$ Supported by the Austrian Science Fund (FWF): Erwin 
Schr\"odinger Fellowship J4253.}
\thanks{$^\triangleright$ Supported by the Austrian Science Fund (FWF): P31888.}
\thanks{$^\dagger$ This project has received funding from the European Union's
Horizon 2020 research and innovation programme under the Marie Sk\l{}odowska-Curie grant
agreement No 675789.}
\address[MG]{International School for Advanced Studies/Scuola Internazionale Superiore di Studi Avanzati (ISAS/SISSA),
Via Bonomea 265, 34136 Trieste, Italy}
\email{mgallet@sissa.it}
\address[GG]{Johann Radon Institute for Computation and Applied Mathematics 
(RICAM), Austrian Academy of Sciences, Linz, Austria}
\email{georg.grasegger@ricam.oeaw.ac.at}
\address[JL, JS]{Johannes Kepler University Linz, Research Institute for Symbolic Computation (RISC), Austria}
\email{jan.legersky@risc.jku.at, jschicho@risc.jku.at}
\address[JL]{Department of Applied Mathematics, Faculty of Information Technology, Czech Technical University in Prague, Czech Republic.}

\newcommand{\Z}{\mathbb{Z}}

\newcommand{\R}{\mathbb{R}}
\newcommand{\C}{\mathbb{C}}

\newcommand{\p}{\mathbb{P}}

\newcommand{\mscr}{\mathscr}
\newcommand{\mcal}{\mathcal}

\newcommand{\SO}{\operatorname{SO}}
\newcommand{\pgl}{\p\!\operatorname{GL}}
\newcommand{\red}{\mathrm{red}}
\newcommand{\blue}{\mathrm{blue}}

\newcommand{\M}{\overline{\mscr{M}}}

\newcommand{\tomap}[2]{#1:#2}
\newcommand{\twotoone}{\tomap{2}{1}}

\newcommand{\casesQ}[1]{$\mathfrak{#1}$}
\newcommand{\caseG}{\casesQ{g}}
\newcommand{\caseO}{\casesQ{o}}
\newcommand{\caseE}{\casesQ{e}}
\newcommand{\caseR}{\casesQ{r}}
\newcommand{\caseL}{\casesQ{l}}

\newcommand{\cuts}[1]{\mathrm{#1}}
\newcommand{\cutou}{T_\cuts{ou}}
\newcommand{\cuteu}{T_\cuts{eu}}
\newcommand{\cutom}{T_\cuts{om}}
\newcommand{\cutem}{T_\cuts{em}}

\newcommand{\divou}{D_\cuts{ou}}
\newcommand{\diveu}{D_\cuts{eu}}
\newcommand{\divom}{D_\cuts{om}}
\newcommand{\divem}{D_\cuts{em}}

\newcommand{\curveC}{\mathcal{C}}
\newcommand{\curveD}{\mathcal{D}}
\newcommand{\curveZ}{\mathcal{Z}}

\DeclareMathOperator{\crossrat}{cr}

\newcommand{\CM}{C\!M}
\newcommand*\phantomas[3][c]{%
\ifmmode
\makebox[\widthof{$#2$}][#1]{$#3$}%
\else
\makebox[\widthof{#2}][#1]{#3}%
\fi
}

\tikzstyle{vertex}=[circle, draw, fill=black, inner sep=0pt, minimum size=4pt]
\tikzstyle{edge}=[line width=1.5pt,black!50!white]
\tikzstyle{gridp}=[inner sep=1pt,circle,fill=black!70!white]
\tikzstyle{gridl}=[black!50!white]

\tikzstyle{lnode}=[circle,white,draw=black!60!white,fill=black!60!white,inner sep=1pt]
\tikzstyle{cnode}=[circle,draw=black!60!white,fill=black!60!white,inner sep=1.5pt]
\tikzstyle{redge}=[edge,Red]
\tikzstyle{bedge}=[edge,NavyBlue]

\colorlet{ncol}{Green!60!black}
\tikzstyle{nvertex}=[vertex, draw=ncol, fill=ncol]
\tikzstyle{edgeq}=[edge,gray!60,densely dashed]
\tikzstyle{nedge}=[edge,ncol]
\tikzstyle{oedge}=[edge,Red!60!black]

\tikzstyle{curveline}=[line width=0.4mm]
\tikzstyle{markedpoint}=[circle, draw, fill,inner sep=0.08cm]

\newcounter{cases}
\renewcommand{\thecases}{(\arabic{cases})}
\newcommand{\newCase}{\refstepcounter{cases}\thecases}
\crefname{cases}{case}{case}

\begin{document}

\begin{abstract}
 We interpret realizations of a graph on the sphere up to rotations as elements 
of a moduli space of curves of genus zero. We focus on those graphs that admit an assignment of 
edge lengths on the sphere resulting in a flexible object. Our interpretation of realizations allows us to provide a 
combinatorial characterization of these graphs in terms of the existence of particular colorings of 
the edges. Moreover, we determine necessary relations for flexibility between the spherical lengths of the 
edges. We conclude by classifying all possible motions on the sphere of the complete bipartite graph with $3+3$ vertices where 
no two vertices coincide or are antipodal.
\end{abstract}

\maketitle

\section{Introduction}

The study of mobile spherical mechanisms is an active area of research, as 
confirmed by several recent publications on the topic, see for 
example~\cite{Corinaldi2018,Essomba2018,Nawratil2011,Nawratil2012,Nawratil2010,
Sun2018}. One of the reasons for this interest is that serial manipulators for 
which all the axes of the joints are concurrent can be considered as spherical 
mechanisms. In this paper, we focus on the combinatorial structure of these 
mechanisms, namely on the graphs whose vertices correspond to joints and whose 
edges correspond to bars. We will always suppose that the graphs are 
connected, without multiedges or self-loops. We describe combinatorial 
properties of these graphs 
ensuring that there exist assignments of edge lengths that make them flexible on 
the sphere. By the latter we mean that there are infinitely many essentially 
distinct ways to realize the vertices of the graph on the sphere so that the 
distance of adjacent vertices is the given edge length. Here by ``essentially 
distinct'' we mean that any two such realizations do not differ by an isometry 
of the sphere. 
Notice that only non-adjacent vertices are allowed to coincide or to be antipodal.
Moreover, we deal with complex realizations, namely we allow the coordinates of 
the points defining a realization of a graph to be complex numbers. This opens 
the door to tools from algebraic geometry that can be employed in the 
study of this problem.

\begin{figure}[ht]
  \begin{center}
    \includegraphics[width=3.8cm]{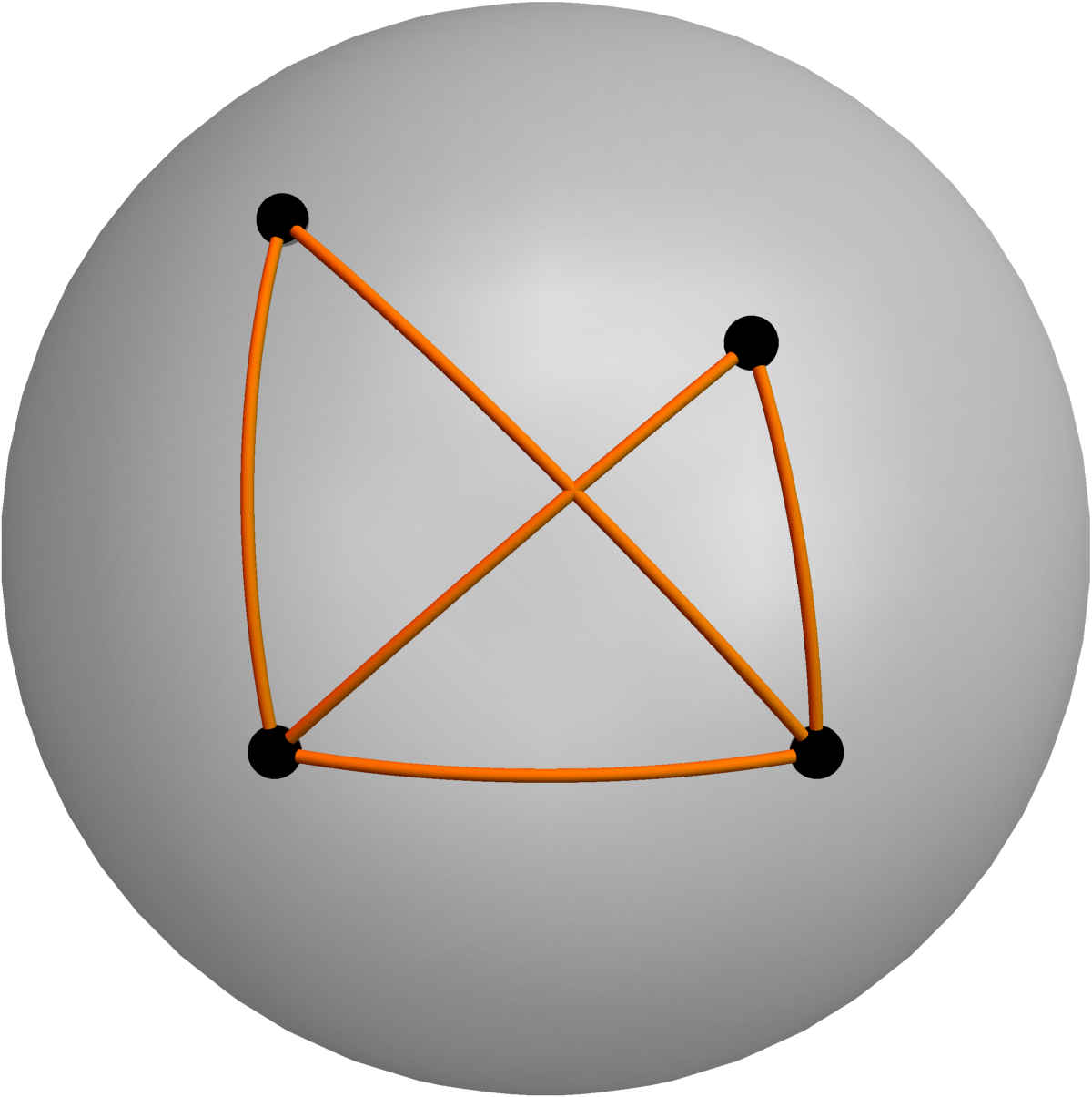} \hspace{0.2cm}
    \includegraphics[width=3.8cm]{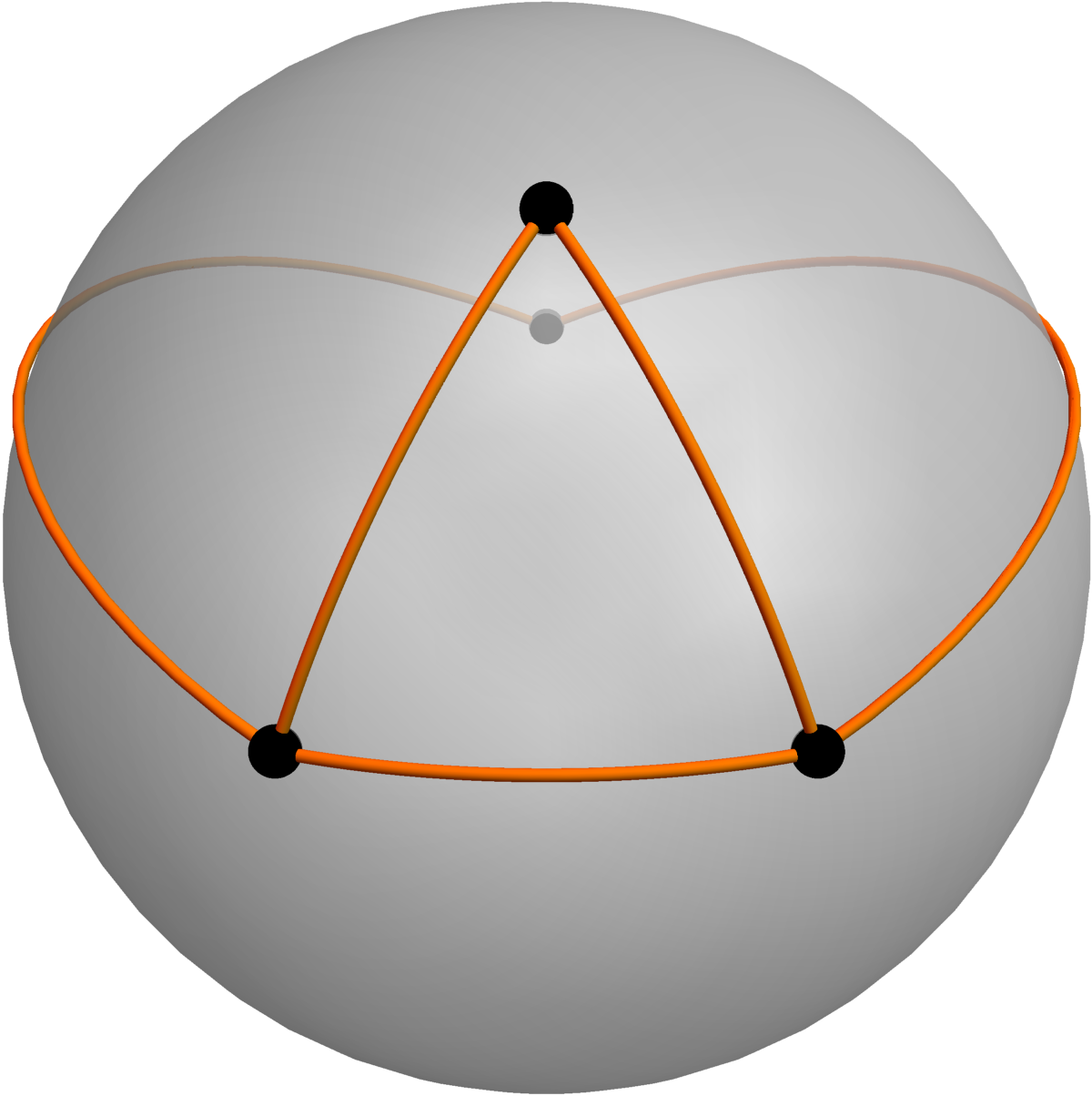} \hspace{0.2cm}
    \includegraphics[width=3.8cm]{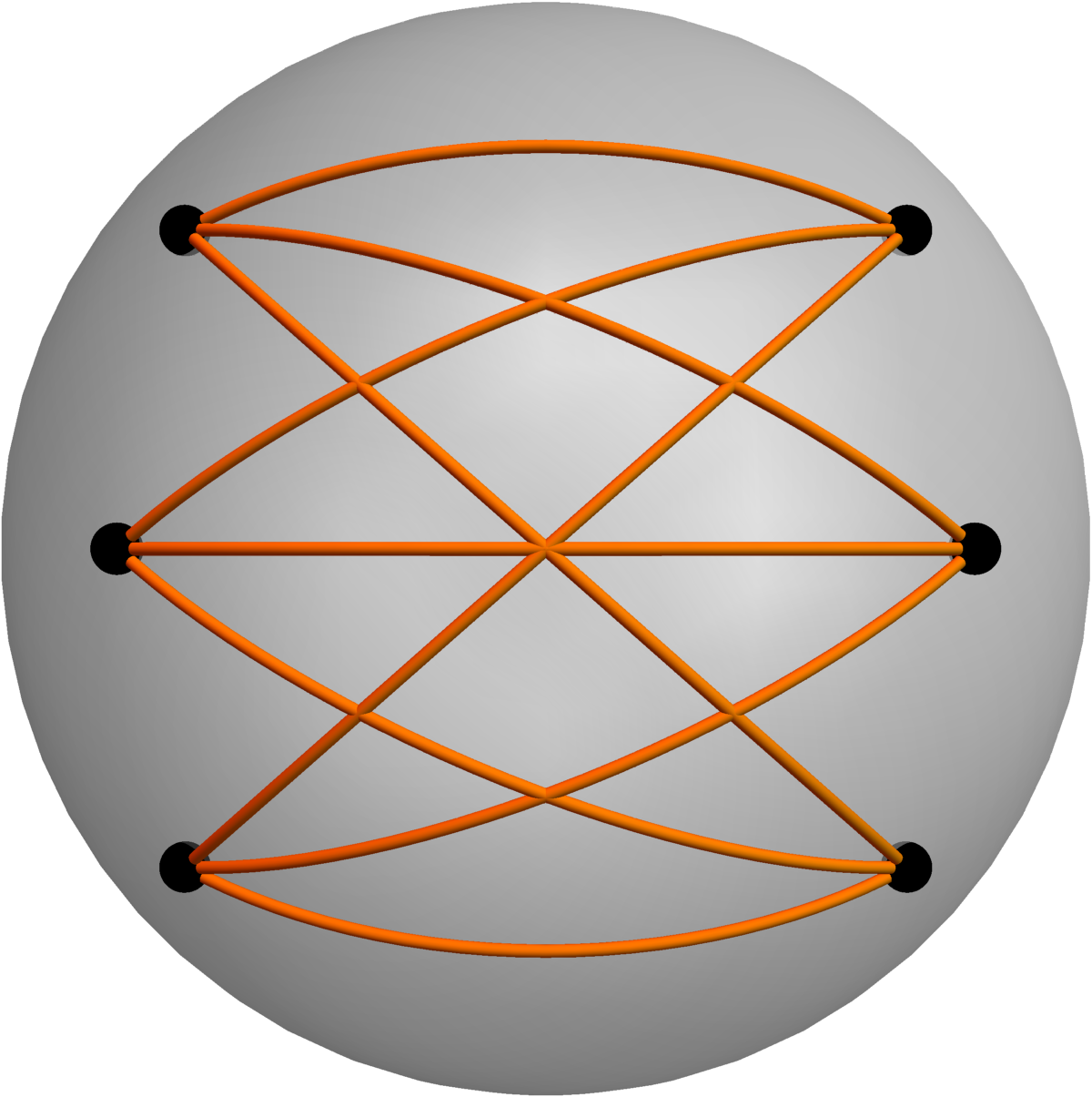}
    \caption{Realizations of graphs on the sphere: the unique graph with 4 vertices and 5 edges (left and middle)
    and the bipartite complete graph $K_{3,3}$ (on the right).}
  \end{center}
\end{figure}

A lot of the current research on rigidity and flexibility of graphs, polyhedra, and mechanisms
analyzes the situation in two- or three-dimensional Euclidean space. 
However, there have been already substantial contributions concerning 
rigidity or flexibility in other ambient spaces, for example 
the torus \cite{NixonRoss2014, Ross2014, Ross2015}, 
the cylinder \cite{NixonOwenPower2012}, 
revolute surfaces \cite{NixonOwenPower2014}, and 
other geometries (spherical, hyperbolic, projective) \cite{Alexandrov1997, Gaifullin2014, Gaifullin2015, Izmestiev, SaliolaWhitely, Stachel2001, Stachel2006}.

Although the combinatorial characterization we obtain applies to all graphs, it 
is of particular interest in the case of \emph{generically rigid} graphs. 
These are graphs such that for a general realization on the sphere there are 
only finitely many essentially distinct realizations having the same edge 
lengths. It is known that generically rigid graphs on the sphere coincide with 
generically rigid graphs in the plane; for a modern account on this topic, 
see for example~\cite{Eftekhari2018}. 
Pollaczek-Geiringer~\cite{Geiringer1927} and Laman~\cite{Laman1970} proved that
generically rigid graphs are precisely those spanned by graphs $G = (V,E)$ such that
$|E| = 2 |V| - 3$ and $|E'| \leq 2 |V'| - 3$ for every nontrivial induced subgraph 
$G' = (V', E')$ of~$G$.
Graphs of the latter kind are called \emph{Laman graphs}.

The two papers \cite{Grasegger2018, Grasegger2018a} study flexible assignments 
of edge lengths of graphs in the plane. The authors provide there a combinatorial description 
of those graphs admitting flexible assignments in terms of special colorings 
of their edges. In the spherical case, we use a subclass of the edge colorings for the planar case. A 
fundamental construction in~\cite{Grasegger2018} is based on parallelograms, which are not 
available on the sphere, hence we need to rely on new ideas.

The main technique we use takes inspiration from \emph{bond 
theory}, which has been developed to study paradoxical motions of serial 
manipulators, for example $5R$  closed chains with two degrees of freedom, or 
mobile $6R$ closed chains (see \cite{Hegedues2015, Hegedues2013, Li2018}). The 
core idea is to consider a compactification of the space of configurations of a 
manipulator which admits some nice algebraic properties. This introduces elements 
``on the boundary'', namely limits of configurations that are, in turn, not 
configurations. The analysis of these elements, called \emph{bonds}, provides 
information about the geometry of the manipulator.

In our case, we reduce the study of realizations of a graph on the unit sphere, 
up to rotations, to the study of elements of a so-called \emph{moduli space of 
stable curves with marked points}. The latter is a well-known object in 
algebraic geometry, and we use it to define the notion of bond for graphs that 
allow a flexible assignment of edge lengths, and to prove the main results of 
this paper. The reduction has been already explained and used 
in~\cite{GalletGraseggerSchicho}, and it is based on the observation that the 
spherical distance between two points on the unit 
sphere can be expressed in terms of the cross-ratio of four points on a conic.
Moreover, the action of rotations on the points of the sphere is translated 
into an action of the general linear group on the points on the conic. Once this 
reduction is established, we can apply the general philosophy of bond theory: 
if a graph admits a flexible assignment, then there is at least a one-dimensional set 
of realizations in the moduli space; this one-dimensional set intersects the boundary 
of the moduli space, originating bonds. It turns out that bonds have a 
rich combinatorial structure, and they can be used to define a special class of 
bichromatic edge colorings on the graph we started with. We call these colorings 
\emph{NAP} (\emph{``No Alternating Path''}) since no $3$-path in the graph has 
an alternating coloring. Our main result is then (see \cref{theorem:NAP_movable}):

\smallskip
\textbf{Theorem.} Let $G$ be a connected graph without multiedges, or 
self-loops. Then $G$ 
admits a flexible assignment of edge lengths on the sphere if and only if it 
admits a NAP-coloring.

\smallskip
Notice that, as we have already remarked, although the previous theorem holds 
in full generality, it is of particular interest when we consider generically 
rigid graphs. In fact, not all Laman graphs admit NAP-colorings; when 
they do, there is a way to realize them on the sphere so that they 
are flexible. This is an example of what we mean by \emph{paradoxical 
motion} in the title of this paper.

We then delve into a particular case, the one of the bipartite graph~$K_{3,3}$,
which is the smallest ``interesting'' graph that is at the same time generically rigid
and admits NAP-colorings. Here ``interesting'' means that we have realizations 
such that no two vertices collapse or are antipodal. 
We obtain a classification of all the possible motions of~$K_{3,3}$ on the 
sphere. Here we stress that, unlike the rest of the paper, we focus on real 
motions. The classification is obtained by carefully analyzing how the 
subgraphs of~$K_{3,3}$ isomorphic to~$K_{2,2}$ move during a motion 
of~$K_{3,3}$. This analysis employs both constraints imposed by the possible 
bonds that may arise for~$K_{3,3}$, and elementary considerations in sphere 
geometry: these two approaches combined narrow the cases to be considered so 
that they can be examined one by one. In contrast to the planar situation ---
where there are only two such motions, described by Dixon more than a hundred 
years ago~\cite{Dixon1899} --- on the sphere there are three possibilities (see 
\cref{theorem:classification_K33}): 

\smallskip
\textbf{Theorem.}  All the possible motions of~$K_{3,3}$ on the sphere, for 
which no two vertices coincide or are antipodal, are the spherical 
counterparts of the planar Dixon motions, and a third new kind of motion, 
in which the angle between the two diagonals of a quadrilateral in~$K_{3,3}$
stays constant during the motion.

\smallskip
\textbf{Structure of the paper.} 
\Cref{realizations_as_moduli} recalls from~\cite{GalletGraseggerSchicho} 
how (complex) realizations on the sphere up to rotations can be interpreted as 
elements of a moduli space of stable curves with marked points. 
\Cref{colorings} introduces the notion of NAP-coloring, and proves the 
characterization of graphs admitting a flexible (complex) assignment on the 
sphere via NAP-colorings. The section ends by explaining how to obtain algebraic 
relations between the spherical lengths of a flexible assignment of a graph 
that has ``enough'' edges (e.g., a graph  spanned by a Laman graph). 
\Cref{K33} analyzes all possible (real) motions of the graph~$K_{3,3}$ on 
the sphere for which no two vertices collapse or are antipodal.

\section{Realizations on the sphere as elements of a moduli space}
\label{realizations_as_moduli}

The aim of this section is to recall how we can interpret a realization of a 
graph on the sphere, up to sphere isometries, as an element of the moduli 
space of stable curves of genus zero with marked points. This interpretation is 
described in \cite[Sections~1--3]{GalletGraseggerSchicho} and here we 
recall its main aspects to make the paper more self-contained; no new results 
are present in this section.

First of all, let us define what we mean by realization of a graph, which we 
suppose connected, and without multiedges or self-loops. As we 
pointed out in the introduction, we work over the complex numbers, and we 
denote by~$S^2_{\C}$ the complexification of the unit sphere~$S^2$, namely
\[
 S^2_{\C} := 
 \bigl\{ 
  (x,y,z) \in \C^3 \, \colon \, x^2 + y^2 + z^2 = 1 
 \bigr\} \,.
\]
On the real sphere~$S^2 \subset \R^3$ we can compute the distance between two 
points as the cosine of the angle they form with respect to the center of the 
sphere. Since this function is algebraic, we can use it also in the complex 
setting. 

\begin{definition}
\label{definition:spherical_distance}
Given two points $T, U \in S_{\C}^2$, we define 
\[
 \delta_{S^2}(T,U) := \left\langle T, U \right\rangle = 
 \sum_{i=1}^{3} T_i U_i \,.
\]
Notice that, if $\bar{U} := -U$ denotes the antipodal point of~$U$, then 
$\delta_{S^2}(T,U) \! = \! -\delta_{S^2}(T, \bar{U})$. 
For technical reasons, we 
define the \emph{spherical distance} as 
\[
 d_{S^2}(T,U) := \frac{1 - \left\langle T, U \right\rangle}{2} \,.
\]
If $T$ and $U$ are real points, then $\delta_{S^2}$ ranges from $-1$ to $1$, 
and attains the value $0$ when the vectors corresponding to $T$ and $U$ are orthogonal, 
while $d_{S^2}$ ranges from~$0$ (when $T$ and $U$ coincide) to~$1$ (when $T$ and $U$ are antipodal).
\end{definition}

Realizations of graphs on the sphere are then assignments of points on the 
sphere to the vertices of the graph.

\begin{definition}
\label{definition:realization}
 Let $G = (V,E)$ be a graph. A \emph{realization} of~$G$ is a function \mbox{$\rho 
\colon V \longrightarrow S^2_{\C}$}. Given an assignment $\lambda \colon E 
\longrightarrow \C \setminus \{0,1\}$ of spherical distances for the edges, we 
say that a realization~$\rho$ is \emph{compatible} with~$\lambda$ if
\[
 d_{S^2} \bigl( \rho(a), \rho(b) \bigr) = \lambda(\{a,b\}) \quad 
 \text{for all } \{a,b\} \in E \,.
\]
Recall that we allow non-adjacent vertices to coincide/be antipodal in such a realization,
whereas the adjacent ones cannot coincide/be antipodal since the spherical distance 
is not allowed to be $0$ or $1$.
\end{definition}

Once we know what a realization is, we can define when an assignment of 
spherical distances is flexible.

\begin{definition}
 Let $G = (V,E)$ be a graph, and let $\lambda \colon E \longrightarrow \C 
\setminus \{0,1\}$ be an 
assignment of edge lengths. We say that $\lambda$ is a \emph{flexible 
assignment} if there exist infinitely many realizations $\rho \colon V 
\longrightarrow S^2_{\C}$ compatible with~$\lambda$ that are essentially 
distinct. We say that two realizations $\rho_1, \rho_2$ of~$G$ are 
\emph{essentially distinct} if there exists no element $\sigma \in \SO_3(\C)$ 
such that $\rho_1 = \sigma \circ \rho_2$, where
\[
 \SO_3(\C) := 
 \bigl\{
  \sigma \in \C^{3 \times 3} \, \colon \, 
  \sigma \sigma^{t} = \sigma^{t} \sigma = \mathrm{id}, \, 
  \det(\sigma) = 1
 \bigr\}
\]
is the complexification of the group of rotations on the unit sphere.
\end{definition}

The first step towards the moduli space is to associate to each point 
in~$S^2_{\C}$ two points on a rational curve. We do this by considering the 
projective closure of~$S^{2}_{\C}$:
\[
 \mathbb{S}^2_{\C} := 
 \bigl\{ 
  (x:y:z:w) \in \p^3_{\C} \, \colon \, 
  x^2 + y^2 + z^2 - w^2 = 0 
 \bigr\}
\]
and defining the conic~$\mcal{A}$ as the intersection
\[
 \mcal{A} := \mathbb{S}^2_{\C} \cap \{ w = 0 \} \,.
\]
Since $\mathbb{S}^2_{\C}$ is a smooth quadric, there are two families of lines 
in~$\mathbb{S}^2_{\C}$, and each point on~$\mathbb{S}^2_{\C}$ lies on exactly 
one line in one family and on exactly one line in the other family. 

\begin{definition}
\label{definition:lifts}
Let $T \in \mathbb{S}^2_{\C}$ and let~$L_1$ and~$L_2$ be the two lines 
in~$\mathbb{S}^2_{\C}$ passing through~$T$. We define the \emph{left} and the 
\emph{right lifts} of~$T$, denoted by~$T_l$ and~$T_r$, as the intersections 
of~$L_1$ and~$L_2$ with the conic~$\mcal{A}$.
\end{definition}

Since the conic~$\mcal{A}$ is a smooth rational curve, it is isomorphic to~$\p^1_{\C}$,
hence the cross-ratio of four points in~$\mcal{A}$ is defined. 
If the four points are the left and right lifts of two points $T, U 
\in S^2_{\C}$, one finds that the spherical distance between~$T$ and~$U$ can be 
expressed in terms of the cross-ratio of the lifts (see 
\cite[Lemma~3.3]{GalletGraseggerSchicho}): 
\[
 d_{S^2}(T, U) = 
 \frac{\mathrm{cr}\bigl( T_l, T_r, U_l, U_r \bigr)}{\mathrm{cr}\bigl( T_l, T_r, U_l, U_r \bigr) - 1} = 
 \mathrm{cr}\bigl( T_l, U_r, U_l, T_r \bigr) \,.
\]
The key step for moving from realizations to elements of a moduli space is that 
two $n$-tuples of points in~$S^2_{\C}$ differ by an element of~$\SO_{3}(\C)$ 
if and only if the corresponding two $2n$-tuples of lifts differ by an element 
of~$\pgl_{2}(\C)$ when all left and right lifts are distinct (see 
\cite[Proposition~3.4]{GalletGraseggerSchicho}).

The previous results clarify that dealing with realizations of a graph on the 
sphere, up to~$\SO_3(\C)$, is equivalent to dealing with tuples of points on a 
smooth rational curve, up to automorphisms, on which constraints for the cross-ratios 
are imposed, provided that the realizations give rise to distinct left and right 
lifts. Tuples of distinct $m$ points on a smooth rational curve up to automorphisms are 
parametrized by the moduli space~$\mscr{M}_{0,m}$, which is isomorphic to
\[
 \bigl( \p^1_{\C} \setminus \{(0:1), (1:0), (1:1)\} \bigr)^{m-3} 
 \setminus
 \left\{
 \begin{array}{c}
  \text{tuples } (r_1, \dotsc, r_{m-3}) \text{ where } \\
  r_i = r_j \text{ for some } i \neq j
 \end{array}
 \right\} \,.
\]
To see why~$\mscr{M}_{0,m}$ is isomorphic to the latter space, 
recall that all smooth rational curves are isomorphic to~$\p^1_{\C}$,
and that its automorphism group~$\pgl_2(\C)$ is $3$-transitive on points, 
namely if we have an $m$-tuple of distinct points in~$\p^1_{\C}$,
up to isomorphism we can suppose that the first three points
are equal to $(0:1)$, $(1:0)$, and~$(1:1)$.
For our purposes, however, we need a \emph{compact} moduli space,
and this is not the case for~$\mscr{M}_{0,m}$. 
We could take $\bigl( \p^1_{\C} \bigr)^{m-3}$ as a candidate for such a compactification, 
but in this way we would not know how to interpret
the elements of $\bigl( \p^1_{\C} \bigr)^{m-3} \setminus \mscr{M}_{0,m}$
as tuples of points on a rational curve.
As in~\cite{GalletGraseggerSchicho}, we use a smooth compactification of~$\mscr{M}_{0,m}$, 
proposed by Knudsen~\cite{Knudsen1983} and denoted by~$\M_{0,m}$. This 
space is composed of the so-called \emph{stable curves of genus zero with $m$ 
marked points} (see \cite[Introduction]{Keel1992}). These are reduced, at worst 
nodal curves~$X$ with $m$ distinct marked points $O_1, \dotsc, O_m$ such that 
the irreducible components of~$X$ form a tree of rational curves, the points 
$\{ O_i \}_{i=1}^{m}$ lie on the smooth locus of~$X$, and for each irreducible 
component of~$X$, there are at least $3$ points which are either singular, or 
marked. The moduli space $\mscr{M}_{0,m}$ sits inside~$\M_{0,m}$ as an open set, 
since every rational curve with distinct marked points is naturally a stable 
curve. The complement of~$\mscr{M}_{0,m}$ in~$\M_{0,m}$ can be interpreted as a 
``boundary'' for~$\mscr{M}_{0,m}$. The boundary 
of~$\M_{0,m}$ will play a prominent role in the next sections. For example, 
$\mscr{M}_{0,4}$ is isomorphic to $\p^1_{\C} \setminus \{(0:1), (1:0), (1:1)\}$ 
and $\M_{0,4} \cong \p^1_{\C}$, where the boundary is constituted of the following 
three stable curves:
\begin{center}
\begin{tikzpicture}
  \draw[curveline] (-1,1) -- (1,-1);
  \draw[curveline] (-1,-1) -- (1,1);
  \draw (-1/2,1/2) node[markedpoint, label=west:$\mathbf{O_1}$] {};
  \draw (1/2,-1/2) node[markedpoint, label=east:$\mathbf{O_2}$] {};
  \draw (-1/2,-1/2) node[markedpoint, label=west:$\mathbf{O_3}$] {};
  \draw (1/2,1/2) node[markedpoint, label=east:$\mathbf{O_4}$] {};
\end{tikzpicture}
\qquad
\begin{tikzpicture}
  \draw[curveline] (-1,1) -- (1,-1);
  \draw[curveline] (-1,-1) -- (1,1);
  \draw (-1/2,1/2) node[markedpoint, label=west:$\mathbf{O_1}$] {};
  \draw (1/2,-1/2) node[markedpoint, label=east:$\mathbf{O_3}$] {};
  \draw (-1/2,-1/2) node[markedpoint, label=west:$\mathbf{O_2}$] {};
  \draw (1/2,1/2) node[markedpoint, label=east:$\mathbf{O_4}$] {};
\end{tikzpicture}
\qquad
\begin{tikzpicture}
  \draw[curveline] (-1,1) -- (1,-1);
  \draw[curveline] (-1,-1) -- (1,1);
  \draw (-1/2,1/2) node[markedpoint, label=west:$\mathbf{O_1}$] {};
  \draw (1/2,-1/2) node[markedpoint, label=east:$\mathbf{O_4}$] {};
  \draw (-1/2,-1/2) node[markedpoint, label=west:$\mathbf{O_3}$] {};
  \draw (1/2,1/2) node[markedpoint, label=east:$\mathbf{O_2}$] {};
\end{tikzpicture}
\end{center}

\phantomsection
\label{page:boundary}
The boundary makes~$\M_{0,m}$ a compact space, so in particular there exist
limits of sequences of stable curves where two marked points get closer and closer:
the limit of this sequence is a stable curve where a new component is added,
and the two marked points are located on this new component (see \cite[pages 546--547]{Keel1992}).
Notice that the elements of~$\M_{0,m}$ are (stable) curves, but of course, 
being~$\M_{0,m}$ an algebraic variety, we can speak of curves \emph{inside} $\M_{0,m}$, 
meaning one-dimensional subvarieties of~$\M_{0,m}$. 
Actually, we will see that flexible assignments of graphs determine curves in~$\M_{0,m}$,
in the sense of one-dimensional subvarieties.
To prevent confusion and distinguish these two concepts, whenever we think of 
an element of~$\M_{0,m}$ as a curve, we always attach the adjective \emph{stable},
while when we speak about subvarieties, we just say \emph{curves}.

We will encode realizations of a graph on the sphere as fibers of a map between moduli spaces. 
The discussion after Definition~\ref{definition:lifts} explains that, 
when realizations determine distinct left and right lifts, 
we can associate them to elements of~$\mscr{M}_{0,2n} \subset \M_{0,2n}$. 
When this is not the case (for example, when we have a realization that is not injective) 
this association is not straightforward. 
However, we are going to see (\cref{lemma:surjective}) that the formalism of~$\M_{0,2n}$ 
allows us to deal with all cases in a uniform way.

\begin{notation}
We denote by $P_1, \dotsc, P_n$, $Q_1, \dotsc, Q_n$ the marked points of stable curves of the moduli space~$\M_{0,2n}$. 
The labeling here reflects the fact that we think about the $2n$ marked points 
as the $2n$ lifts of $n$ points on the sphere, listing first the left lifts, and then the right lifts.
\end{notation}

\begin{definition}[{\cite[Definition~5.1]{GalletGraseggerSchicho}}]
\label{definition:map}
 Let $G = (V,E)$ be a graph, and suppose that $V = \{1, \dotsc, n\}$. We 
define the following morphism
\[
 \Phi_{G} \colon \M_{0,2n} 
 \longrightarrow 
 \prod_{\{a,b\} \in E} \M_{0,4}^{a, b}
 \cong
 \bigl( \p^1_{\C} \bigr)^{|E|}
\]
whose components are the maps $\pi_{a,b} \colon \M_{0,2n} 
\longrightarrow \M_{0,4}^{a,b}$ that forget all the marked points except for the ones 
labeled by $P_a$, $P_b$, $Q_a$, and~$Q_b$. A complex assignment $\lambda \colon E 
\longrightarrow \C \setminus \{0,1\}$ of~$G$ corresponds to an element~$\Lambda$ in the codomain of~$\Phi_{G}$;
due to the correspondence between elements of~$\mscr{M}_{0,2n}$ and points on the sphere, 
the preimage $\Phi_{G}^{-1}(\Lambda) \cap \mathscr{M}_{0,2n}$ encodes realizations of~$G$
on the sphere compatible with the assignment~$\lambda$, up to rotations.
\end{definition}

We conclude this section by describing some divisors in the moduli 
space~$\M_{0,2n}$ which are crucial for our investigations.

\begin{definition}[{\cite[Introduction]{Keel1992}}]
\label{definition:vital_divisor}
 Let $(I,J)$ be a partition of the marked points~$\{P_1, \dotsc, P_n, Q_1, \dotsc, Q_n\}$ where $|I| \geq 2$ and $|J| 
\geq 2$. We define the divisor~$D_{I,J}$ in~$\M_{0,2n}$ to be the divisor whose 
general element is a stable curve with two irreducible components such that the 
marked points in~$I$ lie on one component, while the marked points in~$J$ lie on the other component.
Hence $D_{I,J}$ contains all those stable curves, together with stable curves with more components, 
where the marked points are distributed according to a refinement of the partition $(I,J)$.
\end{definition}

\begin{proposition}[Knudsen, {\cite[Introduction and Fact~2]{Keel1992}}]
\label{proposition:vital_product}
 Any divisor $D_{I,J}$ as in \cref{definition:vital_divisor} is 
isomorphic to the product~$\M_{0,|I|+1} \times \M_{0, |J| + 1}$. Under this 
isomorphism, the point of intersection of the two components of a general stable 
curve in~$D_{I,J}$ counts as an extra marked point in each of the two factors of 
the product. \end{proposition}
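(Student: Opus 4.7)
The plan is to exhibit mutually inverse morphisms $\phi \colon D_{I,J} \to \M_{0,|I|+1} \times \M_{0,|J|+1}$ and $\psi$ in the reverse direction, exploiting that Knudsen's space $\M_{0,m}$ is a fine moduli space, so that morphisms into it correspond to flat families of stable $m$-marked genus zero curves.

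To define $\phi$, I would start from the universal family $\mscr{X} \to D_{I,J}$ obtained by restricting the universal stable curve over $\M_{0,2n}$. By the very definition of $D_{I,J}$, every geometric fiber of $\mscr{X}$ has a distinguished node separating the marks indexed by $I$ from those indexed by $J$. I would verify that these distinguished nodes sweep out a section $\nu \colon D_{I,J} \to \mscr{X}$, and that partial normalization of $\mscr{X}$ along $\nu$ splits it into two flat subfamilies $\mscr{X}_I$ and $\mscr{X}_J$. Equipping $\mscr{X}_I$ with the marks in $I$ together with the extra section coming from $\nu$ gives a family of stable $(|I|+1)$-marked genus zero curves: stability follows from that of $\mscr{X}$, since the new mark replaces the ``special point'' that the node contributed on the component. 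The universal property of $\M_{0,|I|+1}$ produces one factor of $\phi$, and similarly one obtains the other.

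Conversely, to define $\psi$ I would pull back the two universal families to $\M_{0,|I|+1} \times \M_{0,|J|+1}$, obtaining $\mscr{Y}_I$ and $\mscr{Y}_J$, and glue them along their distinguished sections (the $(|I|+1)$-st and $(|J|+1)$-st) via a pushout. The result is a flat family of at-worst-nodal curves whose dual graph is a tree and whose arithmetic genus is zero; the remaining $|I|+|J|=2n$ sections make it stable. The universal property of $\M_{0,2n}$ then provides a morphism $\psi$, which factors through $D_{I,J}$ by construction.

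The main obstacle is carrying out these steps scheme-theoretically rather than merely pointwise: one must show that the locus of distinguished nodes genuinely defines a Cartier section $\nu$, that partial normalization commutes with arbitrary base change, and that the gluing pushout produces a flat family of stable curves with the predicted dual graph. Once these technicalities are dispatched, the identities $\psi \circ \phi = \id_{D_{I,J}}$ and $\phi \circ \psi = \id$ follow by Yoneda, as both compositions act trivially on the corresponding universal families.
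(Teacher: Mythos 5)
The paper does not actually prove this proposition: it is quoted from Keel \cite{Keel1992}, who attributes it to Knudsen \cite{Knudsen1983}, so there is no in-text argument to compare against. Your sketch is nonetheless the standard proof found in those sources, phrased via the fine-moduli universal property of $\M_{0,m}$. In the usual exposition the argument runs in the direction of your $\psi$: Knudsen's clutching morphism glues the two pulled-back universal curves along their distinguished last sections, produces a flat family of stable $2n$-pointed genus-zero curves with tree dual graph, and by the universal property yields a map $\M_{0,|I|+1}\times\M_{0,|J|+1}\to\M_{0,2n}$; one then shows this is a closed embedding (properness plus a tangent-space computation) and identifies its image with $D_{I,J}$ on geometric fibers. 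Your $\phi$, normalizing the restricted universal curve along the family of distinguished nodes, is a correct and slightly less traditional way to exhibit the inverse, and the technical lemmas you flag (the node locus is a section, partial normalization commutes with base change, the gluing pushout is flat, stable, and at-worst-nodal of arithmetic genus zero) are precisely the content Knudsen establishes. So the approach is correct and consistent with the cited references; in genus zero one can also make everything quite explicit using Keel's inductive blow-up description of $\M_{0,m}$, under which $D_{I,J}$ is visibly a product of smaller moduli spaces, which gives a more concrete alternative to the abstract gluing argument.
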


\begin{remark}
\label{remark:keel_relations}
 Recall from \cite{Keel1992} that if $a,b \in \{1, \dotsc, n\}$, then 
there is a way to identify $\p^1_{\C}$ with $\M_{0,4}$ so that the 
preimages of the points~$(0:1)$, $(1:0)$, and~$(1:1)$ under the forgetful
map~$\pi_{a,b} \colon \M_{0,2n} \longrightarrow \M_{0,4}$ are given by
\[
 \bigcup_{\substack{P_a,P_b \in I \\ Q_a,Q_b \in J}} D_{I,J}, \quad
 \bigcup_{\substack{P_a,Q_a \in I \\ P_b,Q_b \in J}} D_{I,J}, \quad
 \text{and }
 \bigcup_{\substack{P_a,Q_b \in I \\ P_b,Q_a \in J}} D_{I,J}.
\]
\end{remark}

\section{Bonds and NAP-colorings}
\label{colorings}

In this section we characterize combinatorially the property for a graph of 
admitting a flexible assignment of spherical distances for the edges. We do so 
by proving that flexibility is equivalent to the existence of particular 
colorings. This equivalence is shown by considering how configuration curves of 
mobile graphs intersect the boundary of~$\M_{0,m}$. 

\begin{definition}
\label{definition:cut}
Consider a stable curve~$X$ of genus zero with $2n$ marked points $P_1, \dotsc, P_n$, $Q_1, \dotsc, Q_n$ 
that has at least two irreducible components, namely $X$ belongs to the boundary $\M_{0,2n} \setminus \mscr{M}_{0,2n}$. 
Pick a singular point~$z$ of~$X$. Then $X \setminus \{z\}$ has exactly two connected 
components~$X_1$ and~$X_2$. We define the \emph{cut of $P_1, \dotsc, P_n$, $Q_1, \dotsc, Q_n$ 
determined by~$(X,z)$} to be the partition $(I,J)$ of the marked points $\{P_1, \dotsc, P_n$, $Q_1, \dotsc, Q_n\}$ 
given by
\[
 I := \{P_1, \dotsc, P_n, Q_1, \dotsc, Q_n\} \cap X_1,
 \qquad
 J := \{P_1, \dotsc, P_n, Q_1, \dotsc, Q_n\} \cap X_2.
\]
\end{definition}

\begin{remark}
\label{remark:cut_divisor}
 Notice that if $(X,z)$ induces a cut~$(I,J)$, then $X \in D_{I,J}$.
Moreover, if in a cut~$(I, J)$ we swap~$P_k$ with~$Q_k$ for all~$k$,
obtaining another cut~$(\bar{I}, \bar{J})$, then the divisor~$D_{\bar{I},
\bar{J}}$ is the complex conjugate of~$D_{I,J}$.
\end{remark}

As we explained in the introduction, one of the key ideas of this paper is to 
look at limit elements of the configuration set of a graph with a flexible 
assignment of spherical edge lengths.

\begin{definition}
 Let $G = (V,E)$ be a graph. A \emph{bond} of~$G$ is any stable curve with at least 
two irreducible components in a fiber~$\Phi_G^{-1}(\Lambda)$ 
(see \cref{definition:map}) where no component of~$\Lambda \in \left(\p^1_{\C}\right)^{|E|}$ is 
$(0:1)$, $(1:0)$, or $(1:1)$, where the identification $\M_{0,4} \cong \p^1_{\C}$ 
is made according to \cref{remark:keel_relations}. 
\end{definition}

Bonds determine special cuts:

\begin{lemma}
\label{lemma:cuts_no_two}
 Let $G = (V,E)$ be a graph and let $X$ be a bond of~$G$. Pick a singular 
point~$z$ of~$X$. Then from \cref{definition:cut} we get a 
cut~$(I,J)$, which is a partition of the $2n$-tuple of the marked points $P_1, 
\dotsc, P_n$, $Q_1, \dotsc, Q_n$ of the stable curves in~$\M_{0,2n}$. Such a cut 
satisfies the following condition: for every edge $\{a,b\} \in E$, the 
cardinality of the intersection $I \cap \{ P_a, P_b, Q_a, Q_b\}$ is never~$2$, 
and the same holds for $J \cap \{ P_a, P_b, Q_a, Q_b\}$.
\end{lemma}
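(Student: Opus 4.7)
The plan is to derive the statement directly from the definition of a bond combined with the explicit divisor description recalled in Remark~\ref{remark:keel_relations}. Since $X$ is a bond, by definition no coordinate of $\Phi_G(X) = \Lambda$ equals $(0:1)$, $(1:0)$, or $(1:1)$; equivalently, for every edge $\{a,b\} \in E$, the image $\pi_{a,b}(X) \in \M_{0,4}$ lies in the open stratum $\mscr{M}_{0,4}$. On the other hand, as observed in Remark~\ref{remark:cut_divisor}, the cut $(I,J)$ extracted from $(X,z)$ places $X$ inside the divisor $D_{I,J}$.

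I would then argue by contraposition. Suppose that for some edge $\{a,b\} \in E$ the intersection $I \cap \{P_a,P_b,Q_a,Q_b\}$ has cardinality exactly $2$; the claim for $J$ is equivalent by the symmetry $D_{I,J} = D_{J,I}$. Up to exchanging the roles of $I$ and $J$, the pair sitting in $I$ must be one of the three unordered pairs $\{P_a, P_b\}$, $\{P_a, Q_a\}$, or $\{P_a, Q_b\}$, with the complementary pair in $J$. These three possibilities correspond bijectively to the three unions of boundary divisors listed in Remark~\ref{remark:keel_relations}, describing respectively the preimages under $\pi_{a,b}$ of the three points $(0:1)$, $(1:0)$, $(1:1)$ of $\M_{0,4}$.

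It would then follow that
\[
 X \in D_{I,J} \subseteq \pi_{a,b}^{-1}\bigl(\{(0:1),(1:0),(1:1)\}\bigr),
\]
contradicting the bond hypothesis. This would establish that no such edge $\{a,b\}$ can exist, and hence $|I \cap \{P_a,P_b,Q_a,Q_b\}| \neq 2$ for every edge, with the same conclusion for $J$ by the aforementioned symmetry.

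I do not foresee any serious obstacle: the proof is essentially a translation between boundary divisors of $\M_{0,2n}$ and boundary points of $\M_{0,4}$ under the forgetful map, a dictionary which is entirely supplied by Remark~\ref{remark:keel_relations}. The only minor bookkeeping is checking that the three listed pair-patterns exhaust all ways of splitting the four marked points $\{P_a,P_b,Q_a,Q_b\}$ into two pairs, which is immediate since there are exactly three such unordered bipartitions.
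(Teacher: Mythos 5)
Your argument is correct and is essentially the paper's own proof, just spelled out more explicitly: both argue by contraposition from Remark~\ref{remark:keel_relations}, observing that a cut with $|I \cap \{P_a,P_b,Q_a,Q_b\}| = 2$ for an edge $\{a,b\}$ would place $X$ in a divisor whose $\pi_{a,b}$-image is one of $(0:1)$, $(1:0)$, $(1:1)$, contradicting the definition of a bond.
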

\begin{proof}
 This lemma follows from \cref{remark:keel_relations}: in fact, if the 
statement did not hold for an edge~$\{a,b\}$, we would have that $\pi_{a,b}$ takes one of the 
three values $(0:1)$, $(1:0)$, or $(1:1)$ on~$X$. On the other hand, 
since $X$ is a bond and $\{a,b\}$ is an edge, $\pi_{a,b}(X)$ cannot be 
one of these three values.
\end{proof}

Our first goal now is to show that a graph admitting a flexible assignment has 
a bond with good properties (\cref{proposition:flexible_bond}). To 
do so, we introduce the \emph{Cayley-Menger variety}. For $n \geq 2$, we 
define $\CM_n$ to be the Zariski closure in $(\p^1_{\C})^{\binom{n}{2}}$ of 
\[ 
 \Bigl\{ 
  \bigl( d_{a,b}(W_1, \dotsc, W_n) \bigr)_{1 \leq a < b \leq n} 
  \text{ for } (W_1, \dotsc, W_n) \in (S^2_{\C})^n 
 \Bigr\} \,,
\]
where $d_{a,b} \colon (S^2_{\C})^n \longrightarrow \p^1_{\C}$ maps a 
tuple~$(W_1, \dotsc, W_n)$ to the point $\bigl( d_{S^2}(W_a, W_b) : 1 
\bigr)$ in~$\p^1_{\C}$. Hence, by definition, the Cayley-Menger variety~$\CM_n$ 
comes with $\binom{n}{2}$ projection maps $\tau_{a,b} \colon \CM_n 
\longrightarrow \p^1_{\C}$. Given a graph $G = (V,E)$ where $V = \{1, 
\dotsc, n\}$, we denote the product $\prod_{\{a,b\} \in E} \tau_{a,b}$ 
by~$\tau_G$.

\begin{lemma}
\label{lemma:cayley_menger}
 Let $\lambda \in \C^{E} \setminus \{0,1\}$ be a flexible assignment of a graph~$G = (V,E)$. 
Then there exists a non-edge $\{c,d\}$ such that the restriction 
of~$\tau_{c,d}$ to $\tau_G^{-1}(\Lambda)$ is surjective, where $\Lambda$ is the 
point in $(\p^1_{\C})^{|E|}$ corresponding to~$\lambda$.
\end{lemma}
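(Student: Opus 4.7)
The plan is to argue by contradiction, after first showing that $\tau_G^{-1}(\Lambda)$ contains a positive-dimensional, projective irreducible component.

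To see the positive dimension, let $\mathcal{R} \subseteq (S^2_\C)^n$ be the variety of realizations of~$G$ compatible with~$\lambda$. Flexibility of~$\lambda$ says precisely that $\mathcal{R}$ contains infinitely many $\SO_3(\C)$-orbits. The map $\mathcal{R} \longrightarrow \CM_n$ that sends a realization to its tuple of all pairwise spherical distances is constant on $\SO_3(\C)$-orbits; moreover, two realizations with the same Gram matrix of inner products differ by an element of the slightly larger group $\mathrm{O}_3(\C)$, and since $[\mathrm{O}_3(\C):\SO_3(\C)]=2$, each fiber of $\mathcal{R}\to\CM_n$ is a union of at most two $\SO_3(\C)$-orbits. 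So the image is infinite, and it lies inside $\tau_G^{-1}(\Lambda)$ because the distances indexed by edges are prescribed by~$\lambda$. By construction $\CM_n$ is closed in $(\p^1_\C)^{\binom{n}{2}}$ and therefore projective, so there exists an irreducible component $Z\subseteq \tau_G^{-1}(\Lambda)$ that is projective and of positive dimension.

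Next, observe that $G$ cannot be the complete graph~$K_n$, since then $\tau_G$ would be the closed embedding $\CM_n \hookrightarrow (\p^1_\C)^{\binom{n}{2}}$, all its fibers would be singletons, and $\lambda$ could not be flexible. Hence the set of non-edges of~$G$ is nonempty. Now suppose, for contradiction, that for every non-edge $\{c,d\}$ the restriction $\tau_{c,d}|_Z$ fails to be surjective. Since $Z$ is projective and $\tau_{c,d}$ is a morphism to $\p^1_\C$, the image $\tau_{c,d}(Z)$ is closed; being a proper closed subset of the irreducible curve $\p^1_\C$, it must be finite. Consider now the composition $Z \hookrightarrow \CM_n \hookrightarrow (\p^1_\C)^{\binom{n}{2}}$, which is just the product of all $\tau_{a,b}$ over unordered pairs $\{a,b\}$: on edge coordinates it is constant (equal to the corresponding component of~$\Lambda$), and on each non-edge coordinate its image is finite by assumption. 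Hence $Z$ maps to a finite set under this composition. But the composition is injective, since $\CM_n$ is by definition cut out inside the product by these coordinates. So $Z$ would be finite, contradicting $\dim Z >0$. Therefore some non-edge~$\{c,d\}$ satisfies the conclusion.

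The main potential obstacle is the very first step: guaranteeing that infinitely many essentially distinct realizations actually produce infinitely many distinct points of $\CM_n$, and not merely points accumulating in a common fiber. This relies on the fact that over~$\C$ the Gram matrix of inner products determines an $n$-tuple of unit vectors up to the action of $\mathrm{O}_3(\C)$, so the fibers of $\mathcal{R}\to \CM_n$ are finite unions of $\SO_3(\C)$-orbits. Once this is in place, the remainder is a routine application of projectivity and the fact that proper closed subsets of~$\p^1_\C$ are finite.
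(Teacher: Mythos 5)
Your proof is correct and takes essentially the same approach as the paper: in both arguments the key point is that infinitely many essentially distinct realizations force the image inside $\CM_n$ (hence inside $\tau_G^{-1}(\Lambda)$) to be infinite, so some non-edge projection to $\p^1_{\C}$ has infinite, and therefore (by properness) full, image. The paper asserts tersely that ``the distance of at least two vertices must vary among the infinitely many essentially distinct realizations,'' whereas you justify this by noting that the Gram matrix of unit vectors determines the configuration up to $\mathrm{O}_3(\C)$, so the fibers of $\mathcal{R}\to\CM_n$ are finite unions of $\SO_3(\C)$-orbits; this fills a gap the paper leaves to the reader, and the contradiction wrapper is merely a reformulation of the same surjectivity argument.
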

\begin{proof}
 This follows from the definition of flexible assignment: in fact, we know that 
the image of $\tau_{c,d}|_{\tau_G^{-1}(\Lambda)}$ is composed of 
infinitely many elements for some non-edge $\{c,d\}$, because the distance of at 
least two vertices must vary among the infinitely many essentially distinct 
realizations. Hence the map must be surjective being a morphism between 
projective varieties.
\end{proof}

\begin{lemma}
\label{lemma:surjective}
 If a graph~$G = (V,E)$ has a flexible assignment on the 
sphere, then there exists 
$\Lambda \in \prod_{\{a,b\} \in E} \M_{0,4}^{a, b}$ and a 
non-edge $\{c,d\}$ such that the restriction of~$\pi_{c,d}$ to
$\Phi^{-1}_{G}(\Lambda)$ is surjective.
\end{lemma}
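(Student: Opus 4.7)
The strategy is to transfer the surjectivity statement of Lemma~\ref{lemma:cayley_menger} from the Cayley-Menger variety to the moduli space by exhibiting a surjective morphism $\Psi \colon \M_{0,2n} \twoheadrightarrow \CM_n$ that intertwines the projections $\pi_{a,b}$ and $\tau_{a,b}$ over every pair $\{a,b\} \in \binom{V}{2}$.

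To construct $\Psi$, I would look at the product morphism
\[
 \prod_{\{a,b\} \in \binom{V}{2}} \pi_{a,b} \colon \M_{0,2n} \longrightarrow \bigl(\p^1_\C\bigr)^{\binom{n}{2}},
\]
where each factor $\M_{0,4}^{a,b}$ is identified with $\p^1_\C$ as in Remark~\ref{remark:keel_relations}. On the dense open subset $\mscr{M}_{0,2n}$, a point is a tuple of $2n$ distinct marked points on a smooth $\p^1$ modulo $\pgl_2(\C)$, which via the reduction recalled in Section~\ref{realizations_as_moduli} comes (modulo $\SO_3(\C)$) from a configuration $\rho$ of $n$ points on $S^2_\C$ with all distinct left and right lifts; on such tuples the $\{a,b\}$-th component returns $d_{S^2}(\rho(a),\rho(b))$. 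Hence the image of $\mscr{M}_{0,2n}$ is the set of spherical-distance tuples of distinct-lift configurations, which is dense in $\CM_n$ by the very definition of the latter. Because $\M_{0,2n}$ is projective, the image of the morphism is closed, so it lies in and equals $\CM_n$, yielding a surjective morphism $\Psi \colon \M_{0,2n} \twoheadrightarrow \CM_n$ satisfying $\tau_{a,b} \circ \Psi = \pi_{a,b}$ for every pair $\{a,b\}$; in particular $\tau_G \circ \Psi = \Phi_G$.

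Let $\Lambda$ be the point of $\prod_{\{a,b\} \in E} \M_{0,4}^{a,b}$ corresponding to a given flexible assignment $\lambda$ of $G$. By Lemma~\ref{lemma:cayley_menger} there is a non-edge $\{c,d\}$ such that $\tau_{c,d}$ restricted to $\tau_G^{-1}(\Lambda)$ is surjective. Surjectivity of $\Psi$ together with $\tau_G \circ \Psi = \Phi_G$ gives $\Psi\bigl(\Phi_G^{-1}(\Lambda)\bigr) = \tau_G^{-1}(\Lambda)$, hence
\[
 \pi_{c,d}\bigl(\Phi_G^{-1}(\Lambda)\bigr) \;=\; \tau_{c,d}\bigl(\Psi(\Phi_G^{-1}(\Lambda))\bigr) \;=\; \tau_{c,d}\bigl(\tau_G^{-1}(\Lambda)\bigr) \;=\; \p^1_\C,
\]
and the lemma is proved.

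The main obstacle is checking that the image of $\mscr{M}_{0,2n}$ under $\Psi$ is indeed dense in $\CM_n$: this amounts to verifying that configurations of $n$ points on $S^2_\C$ with all distinct left/right lifts form a non-empty open subset of $(S^2_\C)^n$ and that the induced map to $\mscr{M}_{0,2n}$ is dominant. Both are consequences of the explicit lift construction of~\cite{GalletGraseggerSchicho} recalled in Section~\ref{realizations_as_moduli}, but some care is needed to ensure that the identification $\M_{0,4}^{a,b} \cong \p^1_\C$ of Remark~\ref{remark:keel_relations} is precisely the one for which $\pi_{a,b}$ agrees with $\tau_{a,b} \circ \Psi$, namely the one matching the cross-ratio formula for $d_{S^2}$ recalled before the definition of $\M_{0,m}$.
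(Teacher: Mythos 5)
Your proposal is correct and takes essentially the same approach as the paper: both construct the map $\M_{0,2n} \to \CM_n$ as the product of all forgetful maps $\pi_{a,b}$, observe it makes the relevant diagram commute, prove it is surjective, and then pull back the surjectivity statement of Lemma~\ref{lemma:cayley_menger}. The only cosmetic difference is how surjectivity is finished off --- you invoke properness of $\M_{0,2n}$ to get a closed image containing a dense subset of $\CM_n$, while the paper instead appeals to irreducibility and equality of dimensions; these are equivalent here. Your final step spelling out $\Psi\bigl(\Phi_G^{-1}(\Lambda)\bigr) = \tau_G^{-1}(\Lambda)$ makes explicit what the paper compresses into ``inspecting the diagram.''
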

\begin{proof}
By \cref{lemma:cayley_menger} there exists $\Lambda \in \bigl( \p^1_{\C} 
\bigr)^{|E|}$ and a non-edge $\{c,d\}$ such that 
$\tau_{c,d}|_{\tau_G^{-1}(\Lambda)}$ is surjective. We construct a map 
$\eta \colon \M_{0,2n} \longrightarrow \CM_n$ such that the following diagram 
is commutative:
\begin{center}
  \begin{tikzpicture}
    \node (Mn) at (0,-1.1) {$\M_{0,2n}$};
    \node (CM) at (0,1.1) {$\CM_n$};
    \node[anchor=east] (P) at (-1,0) {$\p^1_{\C}$};
    \node[align=left,anchor=west] (PM) at (2,0) {$\displaystyle\bigl( \p^1_{\C} \bigr)^{|E|} \cong \prod_{\{a,b\} \in E}^{\phantom{\{a,b\} \in E}} \M_{0,4}^{a, b}$};
    \draw[->] (Mn) to node[left,font=\scriptsize] {$\eta$} (CM);
    \draw[->] (Mn) to node[below left,font=\scriptsize] {$\pi_{c,d}$} (P);
    \draw[->] (Mn) to node[below,font=\scriptsize] {$\Phi_G$} (PM);
    \draw[->] (CM) to node[above left,font=\scriptsize] {$\tau_{c,d}$} (P);
    \draw[->] (CM) to node[above,font=\scriptsize] {$\tau_G$} (PM);
  \end{tikzpicture}
\end{center}
The morphism $\eta$ is defined as follows: take the product of all maps 
$\pi_{a,b}$ where $a,b \in \{1, \dotsc, n\}$ with $a \neq b$. This 
gives a map $\M_{0,2n} \longrightarrow \bigl( \p^1_{\C} \bigr)^{\binom{n}{2}}$, 
once we identify each $\M_{0,4}^{a,b}$ with $\p^1_{\C}$. The image of 
$\mscr{M}_{0,2n}$ is contained in~$\CM_n$ and contains an open subset 
of~$\CM_n$, because from an element in~$\mscr{M}_{0,2n}$ we can construct a set 
of $n$ points in~$S^2_{\C}$, and conversely a general choice of $n$ distinct 
points in~$S^2_\C$ determines a general element of~$\CM_n$. Since 
both~$\M_{0,2n}$ and $\CM_n$ are irreducible and of the same dimension, the map 
$\eta$ has the desired codomain and is surjective. By construction, the 
previous diagram is commutative. Now the statement follows by inspecting the 
diagram.
\end{proof}

\begin{proposition}
\label{proposition:flexible_bond}
 If $G$ has a flexible assignment~$\lambda$ on the sphere, then it admits a 
bond. Moreover, we can choose the bond so that it induces a cut~$(I,J)$ such 
that there exists a non-edge $\{c,d\}$ for which $P_c, Q_c \in I$ and $P_d, Q_d 
\in J$. 
\end{proposition}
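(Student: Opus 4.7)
The plan is to combine Lemma~\ref{lemma:surjective}, the description of the preimages of the three boundary points of $\M_{0,4}$ provided by Remark~\ref{remark:keel_relations}, and the product structure of the boundary divisors given by Proposition~\ref{proposition:vital_product}: flexibility produces a positive-dimensional locus in some fiber of $\Phi_G$, and a carefully chosen point where this locus meets the boundary of $\M_{0,2n}$ should be a bond whose cut separates $\{P_c,Q_c\}$ from $\{P_d,Q_d\}$.

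First I would invoke Lemma~\ref{lemma:surjective} to produce a labeling point $\Lambda \in \prod_{\{a,b\}\in E} \M_{0,4}^{a,b}$ and a non-edge $\{c,d\}$ for which the restriction $\pi_{c,d}|_{\Phi_G^{-1}(\Lambda)}$ is surjective onto $\M_{0,4}^{c,d}$. Since $\lambda$ takes values in $\C \setminus \{0,1\}$, under the identification $\M_{0,4} \cong \p^1_{\C}$ of Remark~\ref{remark:keel_relations} no component of $\Lambda$ coincides with one of the three special points $(0:1),(1:0),(1:1)$; so any stable curve in $\Phi_G^{-1}(\Lambda)$ lying in the boundary $\M_{0,2n}\setminus \mscr{M}_{0,2n}$ is automatically a bond. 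Next, surjectivity supplies a curve $X \in \Phi_G^{-1}(\Lambda)$ with $\pi_{c,d}(X) = (1:0)$; since $(1:0)$ is a boundary point of $\M_{0,4}^{c,d}$, the curve $X$ must have at least two irreducible components, and is therefore a bond.

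Finally, I would extract the required cut. By Remark~\ref{remark:keel_relations}, the condition $\pi_{c,d}(X) = (1:0)$ forces $X$ to lie in some divisor $D_{I_0,J_0}$ with $P_c,Q_c \in I_0$ and $P_d,Q_d \in J_0$. By Proposition~\ref{proposition:vital_product}, every point of $D_{I_0,J_0}$ arises by gluing a stable curve in $\M_{0,|I_0|+1}$ and one in $\M_{0,|J_0|+1}$ at a distinguished node $z$, placing the $I_0$-marked points on one side and the $J_0$-marked points on the other. The cut determined by $(X,z)$ in the sense of Definition~\ref{definition:cut} is then precisely $(I_0,J_0)$, which is exactly the kind of cut required by the statement.

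The only step I expect to require care is the last one, since $X$ may actually sit strictly deeper in the boundary: if either factor $\M_{0,|I_0|+1}$ or $\M_{0,|J_0|+1}$ is itself a boundary curve, then $X$ has more than two irreducible components, hence several candidate singular points, and one must single out the right one. However, the gluing node $z$ produced by the product decomposition of Proposition~\ref{proposition:vital_product} is uniquely distinguished among these, and removing it splits $X$ into exactly the two connected pieces carrying $I_0$ and $J_0$, so no genuine obstacle arises.
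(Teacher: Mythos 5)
Your proposal is correct and follows essentially the same approach as the paper's (much terser) proof: use Lemma~\ref{lemma:surjective} to land in $\pi_{c,d}^{-1}\bigl((1:0)\bigr)$, then read off the cut via Remark~\ref{remark:keel_relations} and Proposition~\ref{proposition:vital_product}. Your extra care about singling out the right node when $X$ sits deeper in the boundary is a genuine (if minor) point the paper leaves implicit, and you handle it correctly.
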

\begin{proof}
 By \cref{lemma:surjective}, there exists a non-edge $\{c,d\}$ such that 
$\pi_{c,d}$ is surjective on $\Phi_G^{-1}(\Lambda)$. Hence, there 
exists an element in $\Phi_G^{-1}(\Lambda) \cap 
\pi_{c,d}^{-1}\bigl((1:0)\bigr)$. This is a bond fulfilling the 
conditions in the statement (see also \cref{remark:keel_relations}).
\end{proof}

The special cuts arising from bonds allow us to define colorings of the edges of a graph.

\begin{definition}
\label{definition:bond_coloring}
 Let $G = (V,E)$ be a graph and let $(I,J)$ be a cut induced by a bond of~$G$.
We define a coloring $\varepsilon_{I,J} \colon E \longrightarrow \{ \red, 
\blue\}$ as follows: the edge $\{a,b\}$ is colored red if at least three among 
$P_a, P_b, Q_a, Q_b$ are in~$I$; 
it is colored blue if at least three among $P_a, P_b, Q_a, Q_b$ are in~$J$. 
\Cref{lemma:cuts_no_two} guarantees that every edge has a color.
\end{definition}

\begin{remark}
 Notice that if in \cref{definition:bond_coloring} we swap~$I$ 
and~$J$, then we swap red and blue in the coloring.
\end{remark}

We now describe a crucial property of these colorings~$\varepsilon_{I,J}$. To 
do so, we start with a little detour to the planar situation.

\emph{No almost cycle (NAC) colorings} were introduced in~\cite{Grasegger2018} 
as a combinatorial tool to characterize graphs having flexible assignments in 
the plane. We recall their definition.

\begin{definition}
 Let $G = (V,E)$ be a graph, and let $\varepsilon \colon E \longrightarrow \{ 
\red, \blue \}$ be a coloring of its edges. We say that a cycle is an \emph{almost 
red cycle} if exactly one of its edges is blue, and analogously for almost blue 
cycles. 
We say that $\varepsilon$ is a \emph{NAC-coloring} if it is surjective 
and there are no almost red cycles or almost blue cycles in~$G$.
In other words, every cycle is either monochromatic or has at least two edges 
for each color.
\end{definition}

The main property of NAC-colorings can be stated as follows (see 
\cite[Theorem~3.1]{Grasegger2018}): a graph admits a flexible assignment in the 
plane if and only if it admits a NAC-coloring.

Here we introduce another kind of coloring, called NAP-coloring. 

\begin{definition}
 Let $G = (V,E)$ be a graph, and let $\varepsilon \colon E \longrightarrow \{ 
\red, \blue \}$ be a coloring of its edges. We say that a path $(v,w,z,t)$ in~$G$ is 
an \emph{alternating path} if $\{v,w\}$ and $\{z,t\}$ have the same color and 
$\{w,z\}$ has the opposite color. We say that $\varepsilon$ is a 
\emph{NAP-coloring} (``no alternating path'') if it is surjective, all the $3$-cycles in~$G$
are monochromatic, and there are no alternating paths in~$G$. 
In other words, every edge has an incident vertex such that all its 
incident edges have the same color.
\end{definition}

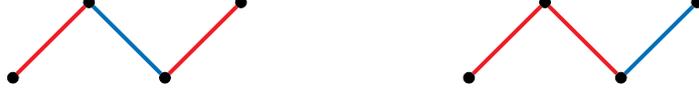
\begin{figure}[ht]
  \begin{center}
    \begin{tikzpicture}
      \begin{scope}
        \node[vertex] (a) at (0,0) {};
				\node[vertex] (b) at (1,1) {};
				\node[vertex] (c) at (2,0) {};
				\node[vertex] (d) at (3,1) {};
				
				\draw[redge] (a)edge(b) (c)edge(d);
				\draw[bedge] (b)edge(c);
      \end{scope}
      \begin{scope}[xshift=6cm]
        \node[vertex] (a) at (0,0) {};
				\node[vertex] (b) at (1,1) {};
				\node[vertex] (c) at (2,0) {};
				\node[vertex] (d) at (3,1) {};
				
				\draw[redge] (a)edge(b) (b)edge(c);
				\draw[bedge] (c)edge(d);
      \end{scope}
    \end{tikzpicture}
  \end{center}
  \caption{On the left, an alternating path. On the right, a path which is 
not alternating.}
\end{figure}

\begin{figure}[ht]
  \begin{center}
    \begin{tikzpicture}
      \begin{scope}
        \node[vertex] (a) at (0,0) {};
				\node[vertex] (b) at (2,1) {};
				\node[vertex] (c) at (2,-1) {};
				\node[vertex] (d) at (4,0) {};
				\node[vertex] (e) at (6,0) {};
				
				\draw[redge] (a)edge(b) (b)edge(d) (b)edge(e);
				\draw[bedge] (a)edge(c) (c)edge(d) (c)edge(e);
      \end{scope}
    \end{tikzpicture}
  \end{center}
  \caption{An example of a NAP-coloring of the complete bipartite graph $K_{3,2}$.}
\end{figure}
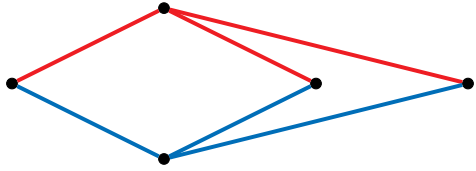
\begin{remark}
\label{remark:NAP_implies_NAC}
 Every NAP-coloring is a NAC-coloring.
\end{remark}

\begin{proposition}
\label{proposition:bond_NAP}
 Let $(I,J)$ be a cut induced by a bond of a graph~$G$. The 
coloring~$\varepsilon_{I,J}$ from \cref{definition:bond_coloring} is a NAP-coloring if and only if there exists a 
non-edge $\{c,d\}$ for which $P_c, Q_c \in I$ and $P_d, Q_d \in J$.
\end{proposition}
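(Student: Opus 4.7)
The plan is to translate the cut data into a vertex labeling and read off the coloring structure from it. For each vertex $v \in V$, exactly one of three situations holds: either $P_v, Q_v \in I$ (call $v$ \emph{fully-$I$}), or $P_v, Q_v \in J$ (\emph{fully-$J$}), or one lift of $v$ lies in $I$ and the other in $J$ (\emph{split}). The first step is to observe, via Lemma~\ref{lemma:cuts_no_two}, which pairs of vertex-types are allowed to span an edge: an edge between two split vertices, or between a fully-$I$ and a fully-$J$ vertex, would contribute exactly two lifts to $I$, so it is forbidden. Hence every edge has at least one non-split endpoint, and a fully-$I$ vertex is never adjacent to a fully-$J$ vertex.

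Next, I would check that the color of an edge is controlled by its non-split endpoint: any edge incident to a fully-$I$ vertex contains either 3 or 4 lifts in~$I$ and is therefore red, while any edge incident to a fully-$J$ vertex is blue. In particular, for any non-split vertex $v$, all edges incident to $v$ share the same color. This immediately yields the no-alternating-path condition: if $(v,w,z,t)$ were an alternating path, the middle edge $\{w,z\}$ would force at least one of $w,z$ to be non-split, but that vertex carries two incident edges of different colors, a contradiction. Thus the non-alternating condition is automatic from the cut data, and the only remaining ingredient for being a NAP-coloring is surjectivity of~$\varepsilon_{I,J}$.

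With this structural description in hand, the equivalence in the proposition reduces to surjectivity. For the forward direction, assume $\varepsilon_{I,J}$ is a NAP-coloring; surjectivity gives both a red and a blue edge, each of which must have a fully-$I$ endpoint $c$ and a fully-$J$ endpoint $d$ respectively. Since fully-$I$ and fully-$J$ vertices cannot be adjacent by the first step, $\{c,d\}$ is a non-edge with $P_c, Q_c \in I$ and $P_d, Q_d \in J$. For the converse, the existence of such a non-edge provides at least one fully-$I$ vertex $c$ and one fully-$J$ vertex $d$; since $G$ is connected (and hence $c$ and $d$ have incident edges), these edges are red and blue respectively, giving surjectivity. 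Combined with the automatic no-alternating-path property, this produces a NAP-coloring.

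The proof is essentially a case analysis in two variables (vertex type of each endpoint), driven by Lemma~\ref{lemma:cuts_no_two}. The one subtlety I would be careful with is the surjectivity step: one must not merely produce a red or blue edge, but rather ensure that both colors appear, which is where the hypothesis that a full fully-$I$ vertex and a full fully-$J$ vertex both exist is essential — split vertices alone produce no edges of guaranteed color without a non-split partner. The rest is bookkeeping.
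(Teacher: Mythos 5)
Your proof is correct, and the overall skeleton matches the paper's: both rest on Lemma~\ref{lemma:cuts_no_two}, both observe that the no-alternating-path property holds for every~$\varepsilon_{I,J}$ regardless of the non-edge hypothesis, and both reduce the equivalence to surjectivity. The genuine difference is organizational and lies mainly in the ``only if'' direction. The paper locates two \emph{incident} edges $\{c,e\}$ and $\{d,e\}$ of different colors (which tacitly uses connectivity plus surjectivity to find a color change along a walk) and then reads off the membership of $P_c,Q_c,P_e,Q_e,\dotsc$ in $I$ and $J$ directly. You instead classify each vertex as fully-$I$, fully-$J$, or split, prove that every red edge has a fully-$I$ endpoint and every blue edge a fully-$J$ endpoint, and that fully-$I$ and fully-$J$ vertices are never adjacent; surjectivity then hands you a red edge and a blue edge with no adjacency requirement, and you are done. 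This avoids having to produce a common neighbor and is a bit slicker. Your vertex trichotomy also makes the ``no alternating path'' step transparent: the middle edge of a $3$-path cannot join two split vertices, so one of its endpoints has all incident edges monochromatic. The only place worth a second of care, which you did handle correctly, is that ``at least three lifts in $I$'' means the edge has a fully-$I$ endpoint but not necessarily two, so the color is genuinely read off from the non-split endpoint and not from both.
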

\begin{proof}
 Assume that there exists such a non-edge $\{c,d\}$. 
Let us suppose that there exist edges $\{v,w\}$, $\{w,z\}$, and $\{z,t\}$ in~$G$ 
such that $\{v,w\}$ is red, $\{w,z\}$ is blue, and $\{z,t\}$ is red.
Notice that here it may happen that $v = t$. 
Then at least three out of $P_v, P_w, Q_v, Q_w$ are in~$I$, 
and the same holds for $P_z, P_t, Q_z, Q_t$. 
This contradicts the fact that at least three of $P_w, P_z, Q_w, Q_z$ are in~$J$. 
The same argument works if we swap red and blue. 
To conclude the proof, we just need to prove that the coloring is surjective. 
Consider now the non-edge $\{c,d\}$. 
Since the graph is connected, there exist edges $\{c,e\}$ and $\{d,f\}$. 
By construction, $\varepsilon_{I,J}(\{c,e\}) \neq \varepsilon_{I,J}(\{d,f\})$, 
so the coloring is surjective.

Now assume that $\varepsilon_{I,J}$ is a NAP-coloring. 
Then there exist two incident edges $\{c,e\}$ and $\{d,e\}$ of different colors, 
say $\varepsilon_{I,J}(\{c,e\}) = \red$ and $\varepsilon_{I,J}(\{d,e\}) = \blue$. 
Because of the NAP hypothesis, $\{c,d\}$ cannot be an edge of the graph. 
Then we must have that either $P_c, Q_c, P_e \in I$ and $P_d, Q_d, Q_e \in J$, 
or $P_c, Q_c, Q_e \in I$ and $P_d, Q_d, P_e \in J$. 
In both cases the statement holds.
\end{proof}

\begin{theorem}
\label{theorem:NAP_movable}
 Let $G$ be a connected graph without multiedges, or 
self-loops. Then $G$ has a flexible assignment on the sphere 
if and only if it admits a NAP-coloring.
\end{theorem}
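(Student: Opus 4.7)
The forward direction is essentially already in place, and the plan is to combine results already proved: Proposition~\ref{proposition:flexible_bond} produces, from a flexible assignment, a bond whose induced cut $(I,J)$ has a non-edge $\{c,d\}$ with $P_c, Q_c \in I$ and $P_d, Q_d \in J$, and Proposition~\ref{proposition:bond_NAP} then guarantees that the associated coloring $\varepsilon_{I,J}$ is a NAP-coloring. All the substantive work is therefore in the converse.

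For the converse, given a NAP-coloring $\varepsilon$, I plan to exhibit a flexible assignment directly on the sphere. First I would unpack the NAP condition into a structural decomposition of $V$: the condition is equivalent to saying that every edge has a \emph{monochromatic} endpoint (one whose incident edges all carry the same color), so $V$ partitions into red-only, blue-only, mixed, and isolated vertices, and this partition enjoys two crucial properties --- no edge joins two mixed vertices, and no edge joins a red-only to a blue-only vertex (such an edge would have to be simultaneously red and blue). Connectedness of $G$ together with surjectivity of $\varepsilon$ then force the three classes red-only, blue-only, and mixed to all be nonempty.

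The construction I would carry out is: collapse every mixed vertex to the north pole $N = (0,0,1) \in S^2$ (legitimate, since mixed vertices are pairwise non-adjacent), choose a generic real placement on $S^2 \setminus \{N, -N\}$ for the red-only and blue-only vertices, and let $\lambda$ record the resulting spherical distances on $E$; genericity ensures $\lambda(e) \in \C \setminus \{0,1\}$ on every edge. A candidate flexible family $\rho_\theta$ is then obtained by rotating the red-only vertices by angle $\theta \in \C$ around the polar axis, while leaving mixed and blue-only vertices fixed. Compatibility with $\lambda$ is immediate: the rotation is an isometry fixing $N$, so distances within the red-only class and between it and the mixed vertex at $N$ are preserved, while no edge runs between the red-only and blue-only classes.

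The step I expect to be the principal obstacle is verifying that distinct values of $\theta$ really yield essentially distinct realizations in the complex sense. The plan is to observe that any $\sigma \in \SO_3(\C)$ intertwining $\rho_\theta$ and $\rho_{\theta'}$ must fix $N$ (forced by the mixed vertices), hence lies in the one-dimensional complex torus stabilizing $N$; it must also fix the image of every blue-only vertex, which is identical in both realizations; and a generic point off the polar axis is fixed inside this torus only by the identity. This should force $\sigma = \id$, and then comparison on red-only vertices pins down $\theta \equiv \theta' \pmod{2\pi}$, yielding infinitely many essentially distinct realizations and hence flexibility of $\lambda$.
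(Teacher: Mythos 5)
Your proposal is correct and follows essentially the same route as the paper: in the converse direction the paper likewise places the vertices incident to both colors (its set $\mathcal{T}$) on the poles and rotates one color class about the polar axis while keeping the other fixed. The only difference is cosmetic (the paper distributes $\mathcal{T}$ among both poles, you use just one), and you additionally spell out the verification that distinct rotation angles give essentially distinct realizations over $\SO_3(\C)$ --- a point the paper states without proof, so your more careful treatment is a welcome supplement rather than a deviation.
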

\begin{proof}
 Suppose that $G$ has a flexible assignment on the sphere. Then
\cref{proposition:flexible_bond} combined with 
\cref{proposition:bond_NAP} shows that $G$ has a NAP-coloring.

Suppose now that $G$ admits a NAP-coloring. Let $\mathcal{T}$ be 
the set of vertices which are incident to a red as well as a blue edge. No two 
vertices in~$\mathcal{T}$ are adjacent, because this would violate the NAP 
condition. Let $\rho \colon V \longrightarrow S_{\C}^2$ be any realization 
mapping each vertex from~$\mathcal{T}$ either to the North pole $(0,0,1)$ or to
the South pole~$(0,0,-1)$, i.e., $\rho(\mathcal{T}) = \{ (0,0,1), (0,0,-1)\}$. 
Then the blue part of the graph can rotate around the polar axis, 
while keeping the red part fixed (see \cref{figure:flexible_realization}).
This shows that $G$ has a flexible assignment on the sphere.
\end{proof}
\begin{figure}[ht]
  \begin{center}
    \begin{tabular}{m{5cm}m{6cm}}
      \includegraphics[width=4.5cm]{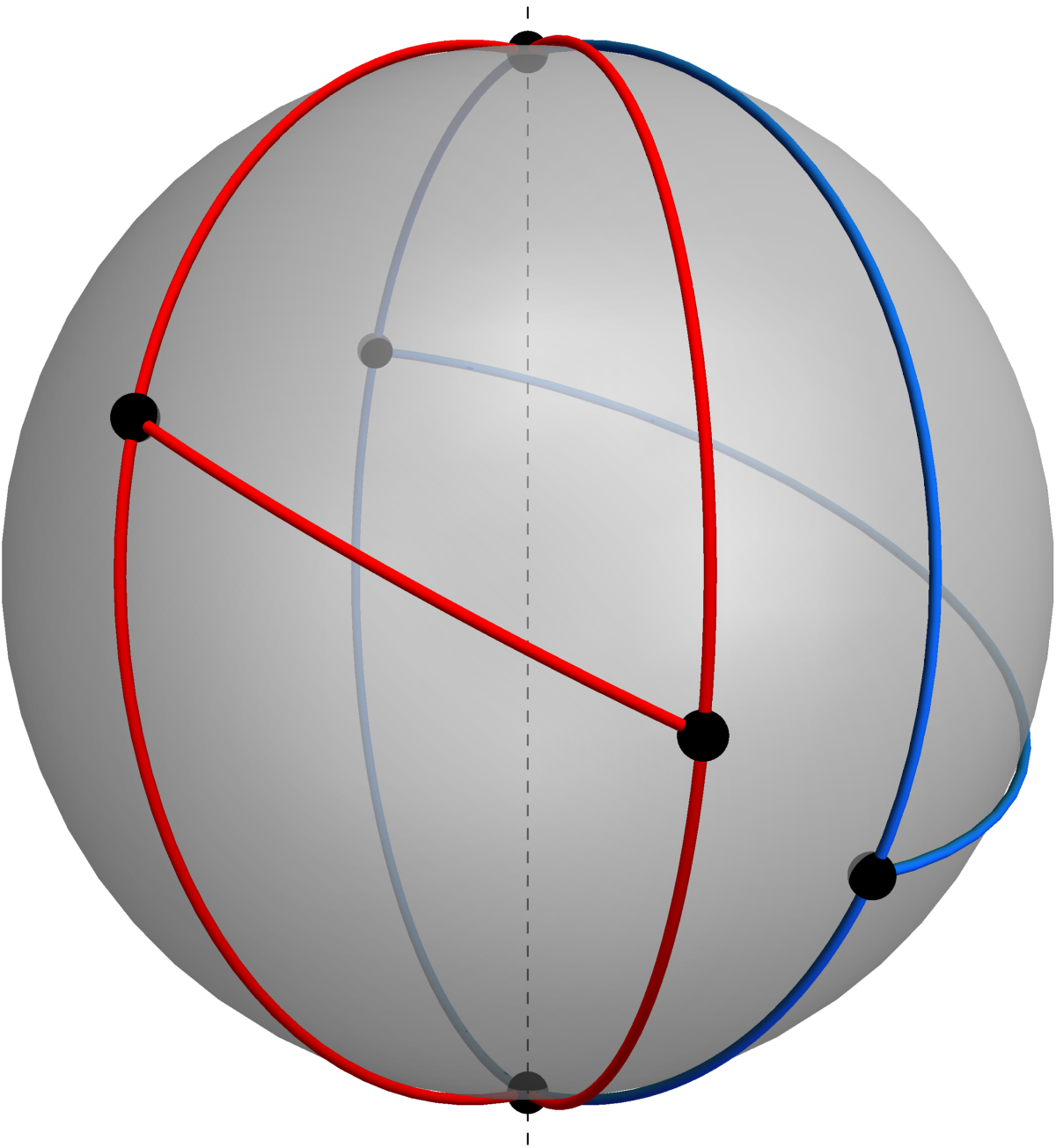} &
      \begin{tikzpicture}[scale=1.5]
        \fill[white] (-0.5,-0.5) rectangle (3,2.5);
        \node[vertex] (a) at (0,0) {};
				\node[vertex] (b) at (2,1) {};
				\node[vertex] (c) at (0,2) {};
				\node[vertex] (d) at (1,0) {};
				\node[vertex] (f) at (1,2) {};
				\node[vertex] (g) at (3,1) {};
				
				\draw[bedge] (a)edge(d) (a)edge(f) (a)edge(c);
				\draw[redge] (b)edge(d) (b)edge(f) (b)edge(g) (d)edge(g) (f)edge(g);
				\draw[bedge] (c)edge(d) (c)edge(f);
      \end{tikzpicture}\\
      \includegraphics[width=4.5cm]{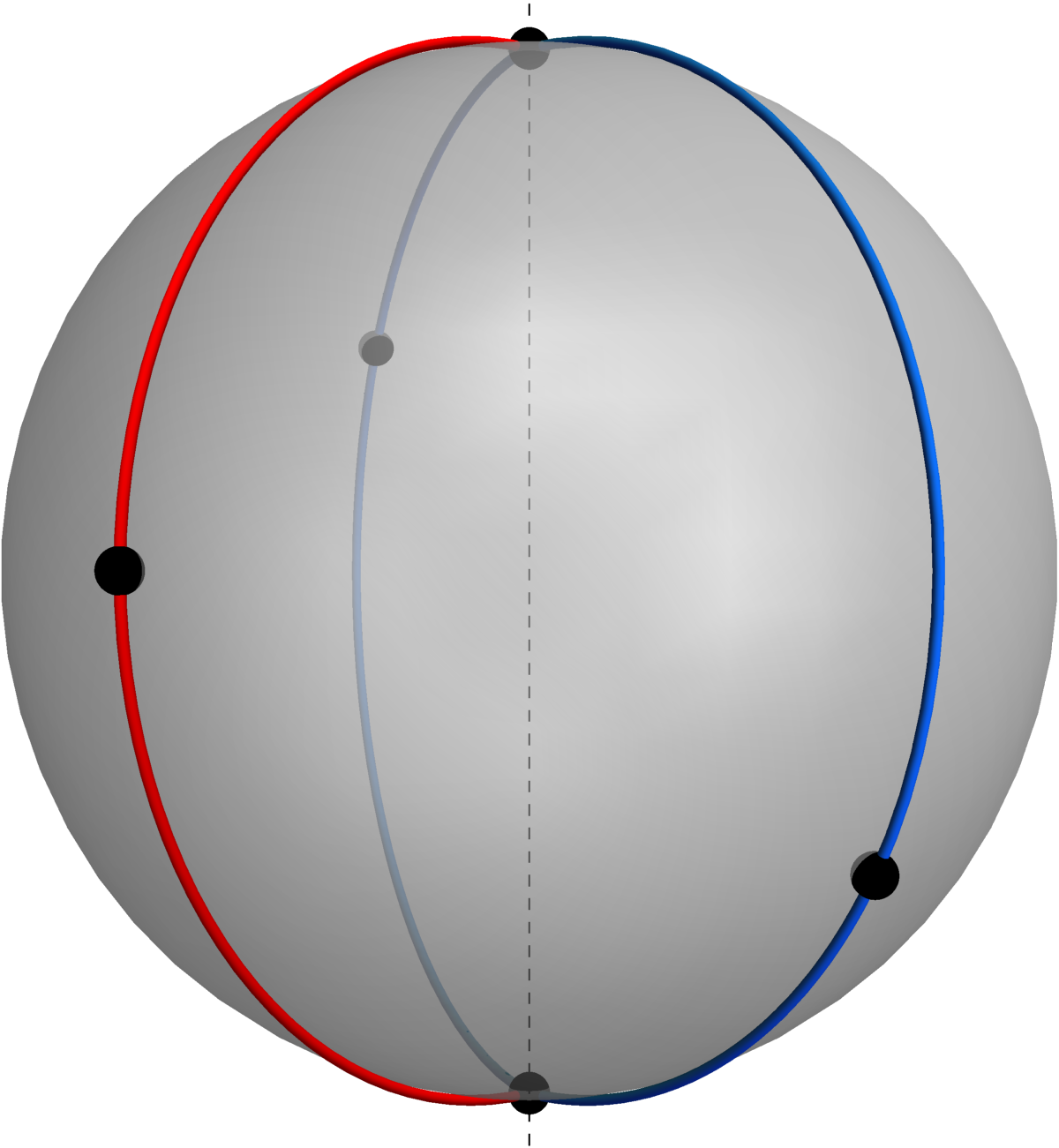} &
      \begin{tikzpicture}[scale=1.5]
        \fill[white] (-0.5,-0.5) rectangle (3,2.5);
        \node[vertex] (a) at (0,0) {};
				\node[vertex] (b) at (2,1) {};
				\node[vertex] (c) at (0,2) {};
				\node[vertex] (d) at (1,0) {};
				\node[vertex] (e) at (1,1) {};
				\node[vertex] (f) at (1,2) {};
				
				\draw[bedge] (a)edge(d) (a)edge(e) (a)edge(f);
				\draw[redge] (b)edge(d) (b)edge(e) (b)edge(f);
				\draw[bedge] (c)edge(d) (c)edge(e) (c)edge(f);
      \end{tikzpicture}
    \end{tabular}
    \caption{The NAP-colorings on the right induce flexible assignments: the two (respectively, three) vertices incident to both blue and red edges
are mapped to two antipodal points on the sphere. For the second graph, this leads to identification of two points: the vertex on top is sent to the
North Pole, and the vertices in the middle and at the bottom are sent to the South Pole.}
    \label{figure:flexible_realization}
  \end{center}
\end{figure}

\begin{remark}
 \Cref{theorem:NAP_movable} holds also over the reals in the following sense:
 if we have a flexible assignment with infinitely many, up to rotations, real compatible realizations, 
 then in particular we have a complex one, so we obtain a NAP-coloring;
 vice versa, the construction of the flexible assignment starting from a NAP-coloring 
 can be performed also on the real unit sphere~$S^2 \subset \R^3$.
\end{remark}

From \cref{remark:NAP_implies_NAC}, we get:

\begin{corollary}
 If a graph has a flexible assignment on the sphere, then it has a flexible 
assignment in the plane.
\end{corollary}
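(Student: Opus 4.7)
The plan is a three-step chain that bundles together results already established in the paper together with one external citation. The corollary is essentially a direct consequence of the NAP-versus-NAC comparison, so I expect the ``proof'' to be only a sentence or two.

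First, I would assume the hypothesis: $G$ admits a flexible assignment of edge lengths on the sphere. By Theorem~\ref{theorem:NAP_movable}, this is equivalent to the existence of a NAP-coloring $\varepsilon \colon E \longrightarrow \{\red, \blue\}$ on~$G$. Second, I would invoke Remark~\ref{remark:NAP_implies_NAC}, which asserts that every NAP-coloring is automatically a NAC-coloring; in particular the same $\varepsilon$ qualifies as a NAC-coloring of~$G$. Third, I would apply the characterization of flexibility in the plane established in \cite[Theorem~3.1]{Grasegger2018}, recalled just before Definition of NAP in this section, which says that a graph admits a flexible assignment in the plane if and only if it admits a NAC-coloring. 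Applying this direction of the equivalence to the NAC-coloring obtained in the previous step yields a flexible labeling of~$G$ in the plane, which is the desired conclusion.

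The only place where I would pause to check is the implicit assumption that our sphere convention ``no two adjacent vertices coincide or are antipodal'' matches the planar convention of \cite{Grasegger2018} in terms of what counts as a flexible labeling. Since a NAC-coloring of $G$ is purely combinatorial data that does not remember the geometric convention, the cited theorem produces a flexible planar labeling from the combinatorial input alone, and no further bookkeeping is needed. So the main ``obstacle'' is really just notational: making sure the reader sees that the two characterizations (sphere via NAP, plane via NAC) are stated in compatible language, after which the inclusion NAP $\subseteq$ NAC from Remark~\ref{remark:NAP_implies_NAC} closes the argument in a single line.
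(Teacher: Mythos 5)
Your argument is exactly the one the paper has in mind: it deduces the corollary from Theorem~\ref{theorem:NAP_movable}, Remark~\ref{remark:NAP_implies_NAC} (NAP implies NAC), and the planar characterization from \cite{Grasegger2018}. This matches the paper's (one-line) derivation, so the proposal is correct and uses the same approach.
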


In the next section, we describe the real motions on the sphere of the bipartite graph~$K_{3,3}$ for which no two vertices coincide or are antipodal. 
The graph~$K_{3,3}$ is the smallest ``interesting'' Laman graph admitting a NAP-coloring.
In fact, the graph in \cref{figure:NAP_254} is the smallest minimally rigid graph with a NAP-coloring but, as we can easily see, its flexible assignments force some of its vertices to be either coincident or antipodal.
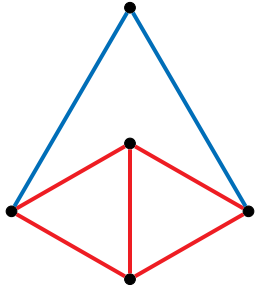
\begin{figure}[ht]
  \begin{center}
    \begin{tikzpicture}[scale=1.8]
      \node[vertex] (1) at (-0.866025,0) {};
      \node[vertex] (2) at (0,0.5) {};
      \node[vertex] (3) at (0,-0.5) {};
      \node[vertex] (4) at (0.866025,0) {};
      \node[vertex] (5) at (0,1.5) {};
      
      \draw[redge] (1)edge(2) (1)edge(3) (2)edge(3) (2)edge(4) (3)edge(4);
      \draw[bedge] (1)edge(5) (4)edge(5);
    \end{tikzpicture}
  \end{center}
  \caption{Smallest minimally rigid graph with a NAP-coloring. In the corresponding flexible assignment, the leftmost and the rightmost
	vertex are sent to the same point or to two antipodal points on the sphere.}
  \label{figure:NAP_254}
\end{figure}

We conclude this section by pointing out how our setup allows us to 
determine necessary conditions for the spherical lengths of the edges of a 
graph having a flexible assignment.

Let us assume that a graph~$G = (V,E)$ has a flexible assignment on the 
sphere, and suppose that $|E| > 2|V|-4$. By assumption, $G$ admits a bond. 
Let $(I,J)$ be a cut determined by this bond, and consider the coloring~$\varepsilon_{I,J}$. 
Consider the divisor~$D_{I,J}$ from \cref{definition:vital_divisor}. 
By \cref{proposition:vital_product}, the divisor~$D_{I,J}$ is 
isomorphic to~$\M_{0, |I|+1} \times \M_{0, |J|+1}$. For each red edge~$\{a,b\}$ 
in~$G$, the cross-ratio $\mathrm{cr}(P_a, P_b, Q_a, Q_b)$ is defined 
on~$\M_{0, |I|+1}$. Therefore, we have a map 
\[
 \M_{0, |I|+1} \longrightarrow 
 \prod_{\substack{\{a,b\} \in E \\ \varepsilon_{I,J}(\{a,b\}) = \red}} 
 \M_{0,4}^{a, b} \,.
\]
If we assume that the number of red edges is bigger than $|I|-2$, the previous 
map cannot be dominant. Hence, its image satisfies at least one algebraic 
equation. Analogously, if the number of blue edges is bigger than $|J|-2$, we 
obtain an algebraic equation between the spherical lengths of blue edges. 

By assumption, the number of edges is bigger than $|I|+|J|-4$, so at least one 
of the previous conditions is met, hence we get an equation for the spherical 
lengths of edges. 
Notice that a class for which the condition $|E|>2|V|-4$ holds is the one of 
Laman graphs (i.e.\ minimally rigid graphs in the plane or, equivalently,
on the sphere).

\section{Flexibility of \texorpdfstring{$K_{3,3}$}{the complete bipartite 
graph 3,3}}
\label{K33}

The goal of this section is to analyze assignments for the spherical lengths of 
the edges of~$K_{3,3}$, the complete bipartite graph with $3+3$ vertices, that 
are flexible on the sphere. In contrast to the first part of the paper, here 
we only consider \emph{real} realizations, and this hypothesis will be 
crucial in several steps of our analysis. We want to classify all motions on 
the sphere of $K_{3,3}$ for which no two vertices coincide or are antipodal. 
After some preliminary discussions, we analyze the possible motions of the 
subgraphs of~$K_{3,3}$ isomorphic to~$K_{2,2}$. After that, we prove that there 
is only a limited number of cases that we need to consider in order to 
classify all motions of~$K_{3,3}$. 
Eventually, we analyze the possible cases one by one.

We fix once and for all the following labeling for the vertices of~$K_{3,3}$:
  \begin{center}
    \begin{tikzpicture}[scale=1.5]
      \begin{scope}
        \node[vertex, label=west:5] (a) at (0,0) {};
				\node[vertex, label=west:3] (b) at (0,1) {};
				\node[vertex, label=west:1] (c) at (0,2) {};
				\node[vertex, label=east:6] (d) at (1,0) {};
				\node[vertex, label=east:4] (e) at (1,1) {};
				\node[vertex, label=east:2] (f) at (1,2) {};
				
				\draw[edge] (a)edge(d) (a)edge(e) (a)edge(f);
				\draw[edge] (b)edge(d) (b)edge(e) (b)edge(f);
				\draw[edge] (c)edge(d) (c)edge(e) (c)edge(f);
      \end{scope}
    \end{tikzpicture}
  \end{center}

This choice has as a consequence, that any subgraph of~$K_{3,3}$ that is isomorphic
to~$K_{2,2}$ has exactly two vertices with even label and two vertices with odd 
label. These are called \emph{even} and \emph{odd vertices}, respectively.

Recall that Dixon described two motions of~$K_{3,3}$ in the plane more than a 
hundred years ago~\cite{Dixon1899}, and Walter and Husty proved that these are 
the only possible ones~\cite{Walter2007}. For the first motion, the 
odd vertices $1$, $3$, and $5$ are placed on a line and the even vertices $2$, 
$4$, and $6$ are placed on another line, perpendicular to the first one. Then, 
the induced distances between adjacent vertices are taken as edge lengths, and 
this constitutes a flexible assignment. The second construction works as 
follows: we consider two rectangles with the same intersection of the diagonals 
such that the edges are parallel/orthogonal to each other. Consider the 
realization of~$K_{4,4}$ where the partition of the vertex set is given by the 
vertices of the two rectangles. This provides a flexible assignment 
for~$K_{4,4}$, from which one can extract an assignment for~$K_{3,3}$ by 
forgetting a pair of vertices. This construction can be described in terms of 
symmetry: take two orthogonal lines, and consider two points in a quadrant 
determined by the two lines; then construct  the eight points given by 
reflecting the two original ones around the two lines. Dixon's motions can be 
also described on the sphere \cite{Wunderlich1976} (compare also 
\cite{Bottema1960, Stachel2013}).

\begin{figure}[ht]
  \begin{center}
    \begin{tikzpicture}[axes/.style={black!50!white,dashed}]
      \begin{scope}
        \draw[axes] (-1.5,0) -- (2.5,0);
				\draw[axes] (0,-2) -- (0,3.5);
        
        \node[vertex, label=west:5] (a) at (0,-1.5) {};
				\node[vertex, label=west:3] (b) at (0,1.8) {};
				\node[vertex, label=west:1] (c) at (0,3) {};
				\node[vertex, label=south:6] (d) at (1.2,0) {};
				\node[vertex, label=south:4] (e) at (-1.2,0) {};
				\node[vertex, label=south:2] (f) at (2,0) {};
				
				\draw[edge] (a)edge(d) (a)edge(e) (a)edge(f);
				\draw[edge] (b)edge(d) (b)edge(e) (b)edge(f);
				\draw[edge] (c)edge(d) (c)edge(e) (c)edge(f);
				
      \end{scope}
    \end{tikzpicture}
    \quad \quad
    \begin{tikzpicture}[axes/.style={black!50!white,dashed}]
      \begin{scope}
        \node[vertex, label=west:5] (a) at (0,0) {};
				\node[vertex, label=east:3] (b) at (3,0) {};
				\node[vertex, label=east:1] (c) at (3,2) {};
				\node[vertex, label=west:6] (d) at (1,-1.5) {};
				\node[vertex, label=east:4] (e) at (2,-1.5) {};
				\node[vertex, label=east:2] (f) at (2,3.5) {};
				\node[vertex, label=west:7] (g) at (0,2) {};
				\node[vertex, label=west:8] (h) at (1,3.5) {};
				
				\draw[axes] (c)--(b)--(a)--(g)--(c)--cycle;
				\draw[axes] (f)--(e)--(d)--(h)--(f)--cycle;
				
				\draw[edge] (a)edge(d) (a)edge(e) (a)edge(f);
				\draw[edge] (b)edge(d) (b)edge(e) (b)edge(f);
				\draw[edge] (c)edge(d) (c)edge(e) (c)edge(f);
				
      \end{scope}
    \end{tikzpicture} 
  \end{center}
  \caption{The two motions of the complete graph $K_{3,3}$ discovered by Dixon. In the motion on the left, three points are
  moving on the $x$-axis, and three points are moving on the $y$-axis. The right is a symmetric motion of $K_{4,4}$ consisting
  of the vertices of two rectangles sharing their symmetry axes, with two points omitted.}
\end{figure}
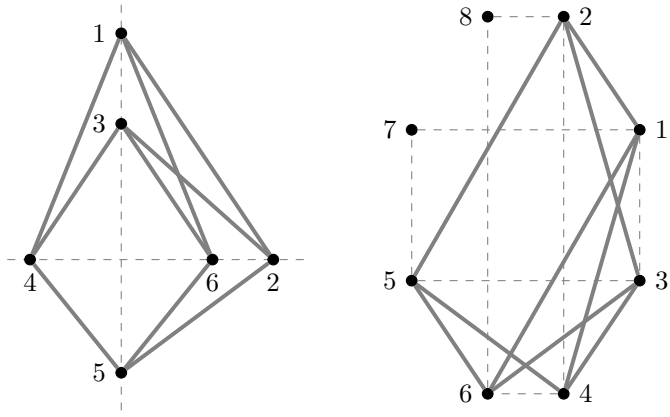

The main result we obtain in this section is the following (see 
\cref{theorem:classification_K33}):

\smallskip
{\noindent \textbf{Classification.}} The only (real) motions of~$K_{3,3}$ on 
the sphere such that no two vertices coincide, or are antipodal, are the two 
analogous motions of the planar ones, and a third new motion.

\smallskip
The classification follows from a careful analysis of the relations between the 
curves of realizations in the moduli spaces~$\M_{0,2n}$ of~$K_{3,3}$ and of its 
subgraphs.

\begin{remark}
One can check that all the NAP-colorings (up to swapping the two colors) of~$K_{3,3}$ are the ones listed in 
\cref{figure:NAP_33}. \Cref{theorem:NAP_movable} ensures the 
existence of flexible assignments, but the construction provided there yields 
non-injective realizations.
\begin{figure}[H]
  \begin{center}
    \begin{tikzpicture}
      \begin{scope}
        \node[vertex] (a) at (0,0) {};
				\node[vertex] (b) at (0,1) {};
				\node[vertex] (c) at (0,2) {};
				\node[vertex] (d) at (1,0) {};
				\node[vertex] (e) at (1,1) {};
				\node[vertex] (f) at (1,2) {};
				
				\draw[redge] (a)edge(d) (a)edge(e) (a)edge(f);
				\draw[bedge] (b)edge(d) (b)edge(e) (b)edge(f);
				\draw[bedge] (c)edge(d) (c)edge(e) (c)edge(f);
      \end{scope}
      \begin{scope}[xshift=2cm]
        \node[vertex] (a) at (0,0) {};
				\node[vertex] (b) at (0,1) {};
				\node[vertex] (c) at (0,2) {};
				\node[vertex] (d) at (1,0) {};
				\node[vertex] (e) at (1,1) {};
				\node[vertex] (f) at (1,2) {};
				
				\draw[bedge] (a)edge(d) (a)edge(e) (a)edge(f);
				\draw[redge] (b)edge(d) (b)edge(e) (b)edge(f);
				\draw[bedge] (c)edge(d) (c)edge(e) (c)edge(f);
      \end{scope}
      \begin{scope}[xshift=4cm]
        \node[vertex] (a) at (0,0) {};
				\node[vertex] (b) at (0,1) {};
				\node[vertex] (c) at (0,2) {};
				\node[vertex] (d) at (1,0) {};
				\node[vertex] (e) at (1,1) {};
				\node[vertex] (f) at (1,2) {};
				
				\draw[bedge] (a)edge(d) (a)edge(e) (a)edge(f);
				\draw[bedge] (b)edge(d) (b)edge(e) (b)edge(f);
				\draw[redge] (c)edge(d) (c)edge(e) (c)edge(f);
      \end{scope}
      
      \begin{scope}[xshift=6cm]
        \node[vertex] (a) at (0,0) {};
				\node[vertex] (b) at (0,1) {};
				\node[vertex] (c) at (0,2) {};
				\node[vertex] (d) at (1,0) {};
				\node[vertex] (e) at (1,1) {};
				\node[vertex] (f) at (1,2) {};
				
				\draw[bedge] (d)edge(a) (d)edge(b) (d)edge(c);
				\draw[bedge] (e)edge(a) (e)edge(b) (e)edge(c);
				\draw[redge] (f)edge(a) (f)edge(b) (f)edge(c);
      \end{scope}
      \begin{scope}[xshift=8cm]
        \node[vertex] (a) at (0,0) {};
				\node[vertex] (b) at (0,1) {};
				\node[vertex] (c) at (0,2) {};
				\node[vertex] (d) at (1,0) {};
				\node[vertex] (e) at (1,1) {};
				\node[vertex] (f) at (1,2) {};
				
				\draw[bedge] (d)edge(a) (d)edge(b) (d)edge(c);
				\draw[redge] (e)edge(a) (e)edge(b) (e)edge(c);
				\draw[bedge] (f)edge(a) (f)edge(b) (f)edge(c);
      \end{scope}
      \begin{scope}[xshift=10cm]
        \node[vertex] (a) at (0,0) {};
				\node[vertex] (b) at (0,1) {};
				\node[vertex] (c) at (0,2) {};
				\node[vertex] (d) at (1,0) {};
				\node[vertex] (e) at (1,1) {};
				\node[vertex] (f) at (1,2) {};
				
				\draw[redge] (d)edge(a) (d)edge(b) (d)edge(c);
				\draw[bedge] (e)edge(a) (e)edge(b) (e)edge(c);
				\draw[bedge] (f)edge(a) (f)edge(b) (f)edge(c);
      \end{scope}
    \end{tikzpicture}
  \end{center}
  \caption{The complete bipartite graph $K_{3,3}$ has exactly 6 NAP-colorings.}
  \label{figure:NAP_33}
\end{figure}
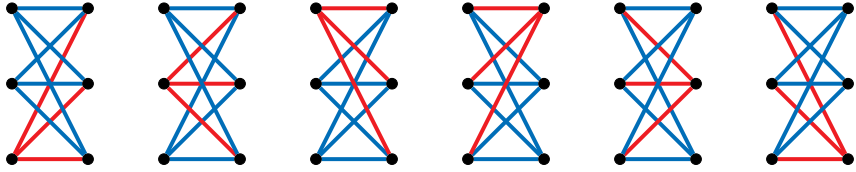
\end{remark}

We start by defining what we mean by ``motion''. Here we make clear that we 
consider real assignments of edge lengths, and real points on the sphere. 
Furthermore, we focus on motions for which no two vertices coincide or are 
antipodal. 

\begin{definition}
 Given a flexible assignment $\lambda \colon E \longrightarrow (0,1)$ for the 
spherical lengths of~$K_{3,3}$, let $\curveD$ be the corresponding fiber of 
the map~$\Phi_{K_{3,3}}$ from \cref{definition:map}, which we call the \emph{configuration curve} of~$K_{3,3}$ 
determined by that assignment. Any positive-dimensional real irreducible 
component~$\curveC$ of~$\curveD$ with infinitely many real elements is called a 
\emph{motion} of~$K_{3,3}$. For every subgraph $G' \subset K_{3,3}$ with 
vertex set~$V' \subseteq \{1, \dotsc, 6\}$, let $\{i_1, \dotsc, i_{6 - |V'|}\} 
:= \{1, \dotsc, 6\} \setminus V'$. We denote by~$\curveC_{i_1 \cdots i_{6 - 
|V'|}}$ the projection of~$\curveC$ via the forgetful map 
 \[
  \M_{0,12} \longrightarrow \M_{0,2|V'|} \,. 
 \]
\end{definition}

We are going to use extensively some particular maps, hence we introduce 
a special notation for them.

\begin{definition}
\label{definition:maps}
 Let $\curveC$ be a motion of~$K_{3,3}$. For all distinct $i,j,k,\ell \in 
\{1, \dotsc, 6\}$, we define the forgetful maps:
 \begin{align*}
  p_i &\colon \curveC \longrightarrow \curveC_{i}, & 
  p_{ij} &\colon \curveC \longrightarrow \curveC_{ij}, \\
  q_i^k &\colon \curveC_{k} \longrightarrow \curveC_{ki}, &
  q_{ij}^k &\colon \curveC_{k} \longrightarrow \curveC_{kij}, \\
  r_i^{k\ell} &\colon \curveC_{k\ell} \longrightarrow \curveC_{k\ell i}, &
  p_{ijk} &\colon \curveC \longrightarrow \curveC_{ijk}.
 \end{align*}
\end{definition}

\begin{definition}
 We say that a motion~$\curveC$ of~$K_{3,3}$ is \emph{proper} if the 
intersection of~$\curveC$ with~$\mscr{M}_{0,12}$ has infinitely many real elements
corresponding to real realizations of~$K_{3,3}$ on the sphere.
Notice that here we ask that the points come from $\mscr{M}_{0,12}$, 
the open subset of~$\M_{0,12}$ whose elements are stable curves with a single component.
\end{definition}

\begin{remark}
 Being proper for a motion is equivalent to the fact that 
there are infinitely many essentially distinct realizations of~$K_{3,3}$ 
where no two vertices coincide or are antipodal. 
In fact, the properness of the motion is equivalent to the fact that we have infinitely many realizations of~$K_{3,3}$ on the (real) sphere where the $6$ vertices determine $12$ distinct left and right lifts.
Now, two points on the sphere have the same left (or right) lift if and only if they belong to the same line on~$S^2_{\C}$. However, two distinct real points on the sphere cannot lie on the same line on~$S^2_{\C}$,
since this line would be real, and no real line is contained in~$S^2_{\C}$.
Moreover, let $T$ and $U$ be real points on the sphere such that the left lift of~$T$
coincides with the right lift of~$U$. Let $T'$ be the antipodal of~$T$.
Since taking antipodals swaps left and right lifts, the points $U$ and $T'$ have the same right lift,
so by what we have just proved they must coincide.
Therefore $U$ is the antipodal of~$T$. 
Hence, asking for a realization of $6$ real points on the sphere 
to have $12$ distinct left and right lifts is equivalent to 
asking that no two points coincide or are antipodal.
\end{remark}

\begin{remark}
 For proper motions, the maps~$p_i$, $q_i^k$, and $r_i^{k \ell}$ from \cref{definition:maps} 
are either birational or \twotoone.
In fact, the possible positions of vertex~$i$ in their fibers 
--- whose number is the degree of the map --- 
are obtained by intersecting circles on the sphere, 
and this may give either one or two intersections.
\end{remark}

We define the spherical analogues of the two motions described by Dixon.
For this, we consider $K_{4,4}$ to be the graph with vertices $\{1, \dotsc, 8\}$
and edges $\bigl\{ \{i,j\} \colon i \in \{1, 3,5,7\}, j \in \{2,4,6,8\} \bigr\}$.

\begin{definition}
 A \emph{Dixon~1 motion} of~$K_{3,3}$ on the sphere is a proper motion in 
which the vertices of the graph move along two orthogonal great circles.
 A \emph{Dixon~2 motion} of~$K_{4,4}$ on the sphere is a proper motion in 
which, up to relabeling the vertices and swapping any vertex with its antipode, 
there exist three involutions (i.e.\ isometries whose square is the identity) 
$o_1$, $o_2$ and~$o_3$ such that $o_1 \, o_2 \, o_3 = \mathrm{id}_{S^2}$ and
\begin{gather*}
 \begin{array}{rcc}
 o_1 \colon &
 \begin{array}{ccc}
  R_1 & \leftrightarrow & R_3, \\
  R_5 & \leftrightarrow & R_7, 
 \end{array}
 &
 \begin{array}{ccc}
  R_2 & \leftrightarrow & R_4, \\
  R_6 & \leftrightarrow & R_8, 
 \end{array}
 \end{array}
 \\
 \begin{array}{rcc}
 o_2 \colon &
 \begin{array}{ccc}
  R_1 & \leftrightarrow & R_7, \\
  R_3 & \leftrightarrow & R_5, 
 \end{array}
 &
 \begin{array}{ccc}
  R_2 & \leftrightarrow & R_8, \\
  R_4 & \leftrightarrow & R_6, 
 \end{array}
 \end{array}
 \\
 \begin{array}{rcc}
 o_3 \colon &
 \begin{array}{ccc}
  R_1 & \leftrightarrow & R_5, \\
  R_3 & \leftrightarrow & R_7, 
 \end{array}
 &
 \begin{array}{ccc}
  R_2 & \leftrightarrow & R_6, \\
  R_4 & \leftrightarrow & R_8, 
 \end{array}
 \end{array}
\end{gather*}
where $(R_1, \dotsc, R_8)$ is any realization of~$K_{4,4}$ in the motion, see \cref{figure:involutions}.
A \emph{Dixon~$2$ motion} of~$K_{3,3}$ is any motion obtained by removing a pair of vertices in a Dixon $2$ motion of~$K_{4,4}$.

\begin{figure}[htb]
\centering
    \begin{tikzpicture}
      \node at (0,0) {\includegraphics[width=.3\textwidth]{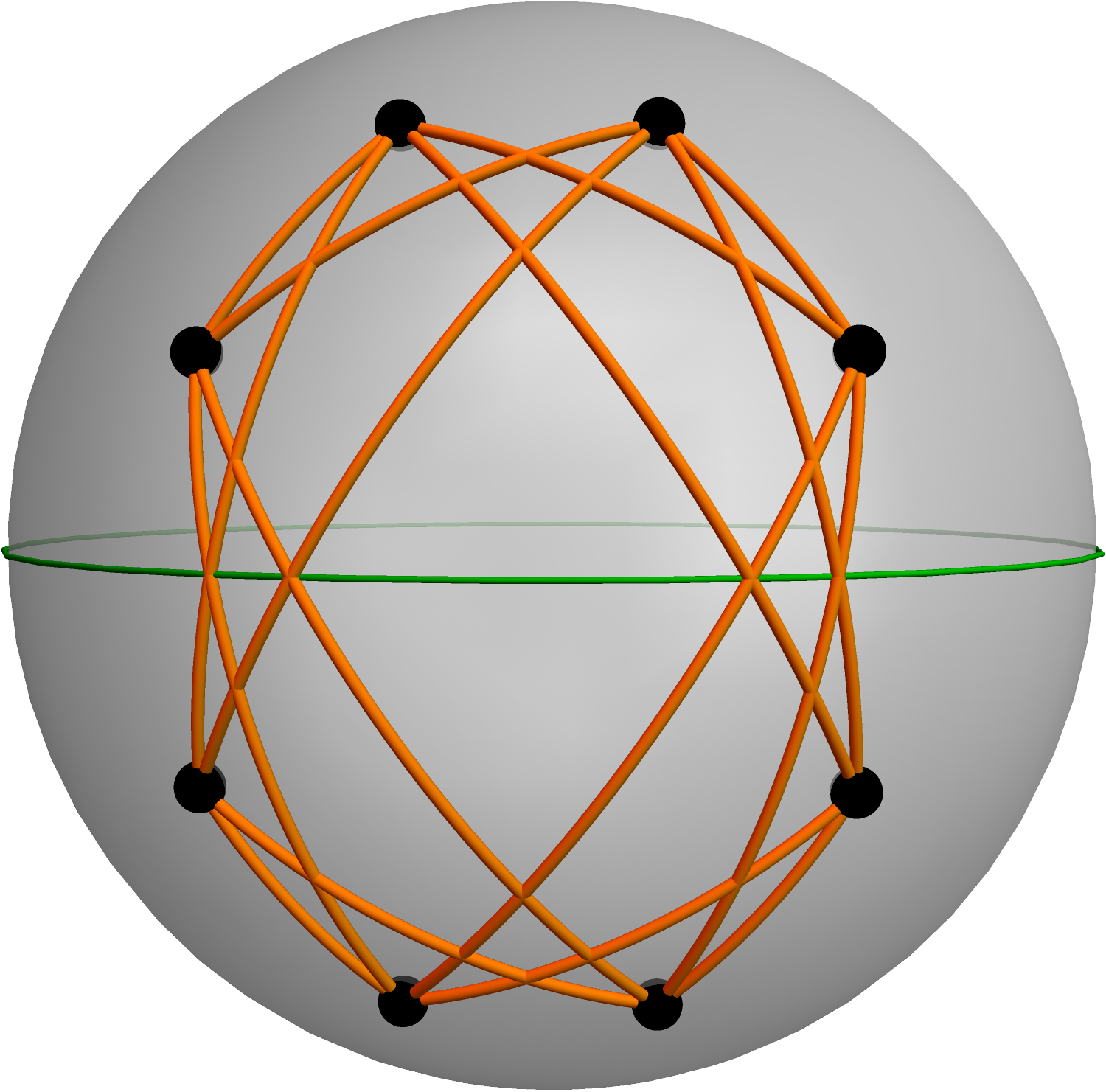}};
      \node at (1.35,0.8) {1};
      \node at (1.35,-0.8) {3};
      \node at (-1.5,-0.8) {5};
      \node at (-1.5,0.8) {7};
      \node at (0.65,1.6) {2};
      \node at (-0.8,1.6) {8};
      \node at (0.65,-1.62) {4};
      \node at (-0.8,-1.62) {6};
	  \node[draw=black] at (0,-2.5) {$o_1$};
	  \node[ForestGreen,scale=1.5] (phantom2) at (-0.12,-0.13) {$\updownarrow$};
    \end{tikzpicture}
    \begin{tikzpicture}
      \node at (0,0) {\includegraphics[width=.3\textwidth]{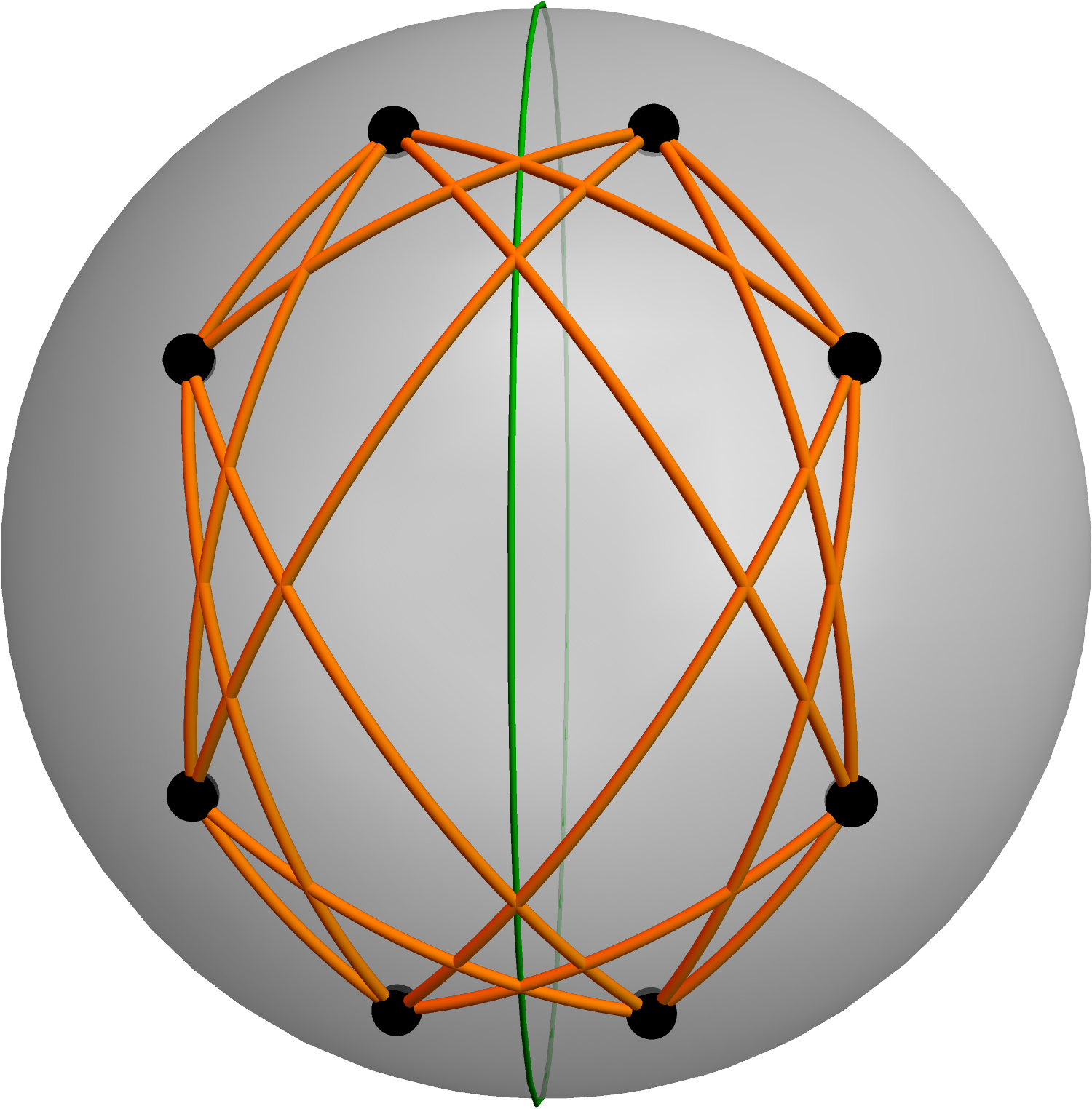}};
      \node at (1.35,0.8) {1};
      \node at (1.35,-0.8) {3};
      \node at (-1.5,-0.8) {5};
      \node at (-1.5,0.8) {7};
      \node at (0.65,1.6) {2};
      \node at (-0.8,1.6) {8};
      \node at (0.65,-1.63) {4};
      \node at (-0.8,-1.63) {6};
	  \node[draw=black] at (0,-2.5) {$o_2$};
	  \node[ForestGreen,scale=1.5] (phantom) at (-0.12,-0.12) {$\leftrightarrow$};
    \end{tikzpicture}
    \begin{tikzpicture}
      \node at (0,0) {\includegraphics[width=.3\textwidth]{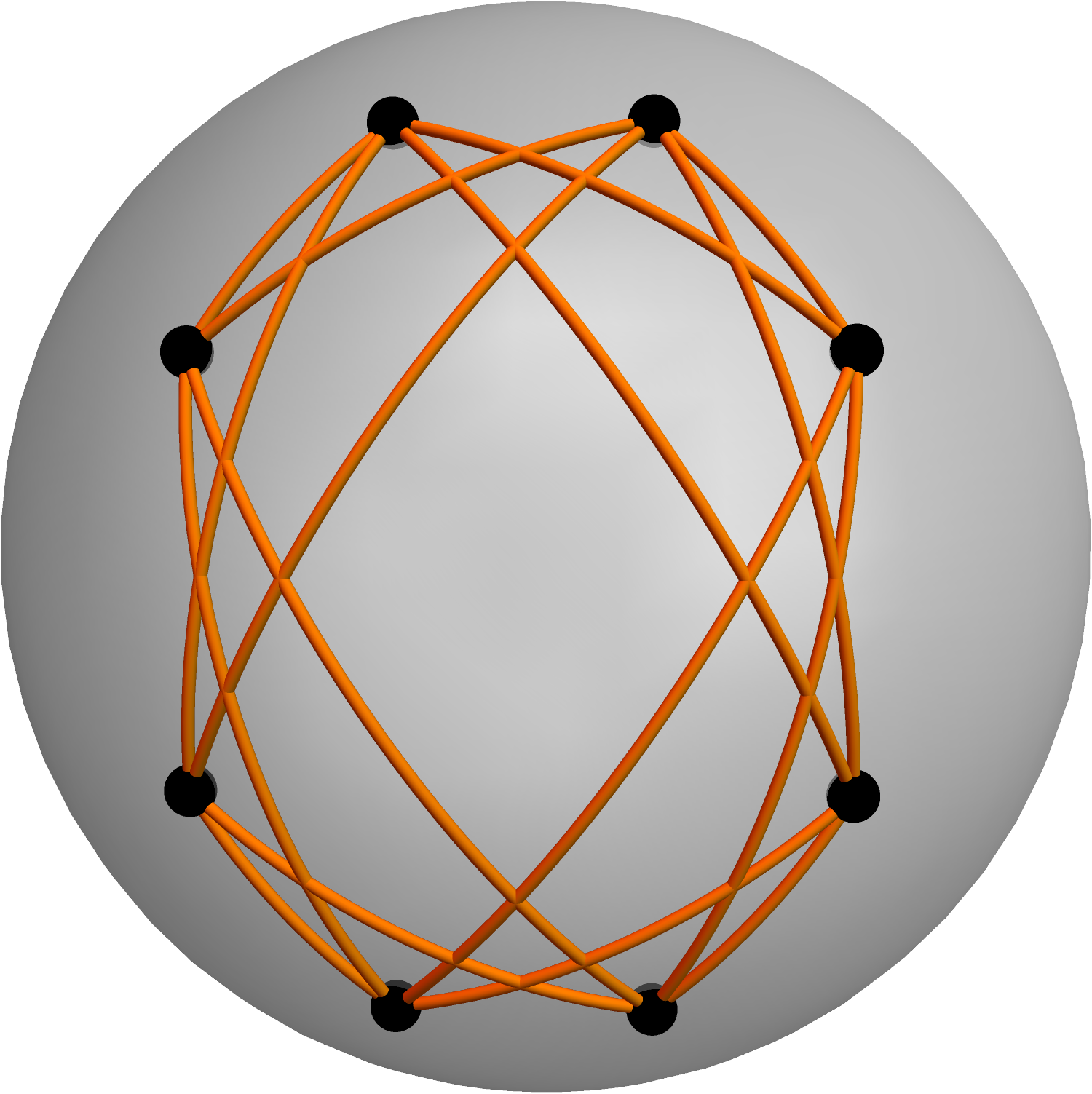}};
      \node at (1.35,0.8) {1};
      \node at (1.35,-0.8) {3};
      \node at (-1.5,-0.8) {5};
      \node at (-1.5,0.8) {7};
      \node at (0.65,1.6) {2};
      \node at (-0.8,1.6) {8};
      \node at (0.65,-1.63) {4};
      \node at (-0.8,-1.63) {6};
	  \node[draw=black] at (0,-2.5) {$o_3$};
	  \node[ForestGreen,scale=1.5] (phantom) at (-0.12,-0.12) {$\circlearrowright$};
    \end{tikzpicture}
    \caption{The three involutions of the sphere describing a Dixon 2 motion.}
    \label{figure:involutions}
\end{figure}
\end{definition}

\begin{lemma}
\label{lemma:dixon}
 Let $\curveC$ be a proper motion of~$K_{3,3}$. If $\deg p_{56} \geq 3$, then 
$\curveC$ is a Dixon~$1$ motion. 
\end{lemma}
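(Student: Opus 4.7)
The strategy uses a degree-count for $p_{56}$ combined with the reflection symmetries of spherical configurations. The map $p_{56} \colon \curveC \to \curveC_{56}$ factors as $\curveC \xrightarrow{p_5} \curveC_{5} \xrightarrow{q^5_6} \curveC_{56}$, and each factor has degree at most two, since the vertex being forgotten is constrained to lie on the intersection of spherical circles centered at its remaining neighbors, and this intersection contains at most two points. Hence $\deg p_{56}$ is a product of two integers in $\{1,2\}$, so it lies in $\{1,2,4\}$, and the hypothesis $\deg p_{56} \geq 3$ forces $\deg p_{56} = 4$.

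At a general configuration $(1,2,3,4)$ in $\curveC_{56}$, let $5_1, 5_2$ be the two intersections of the spherical circles around $2$ and $4$ of the prescribed radii; these are mutual reflections across the great circle $\ell_{24}$ through $2$ and $4$. Similarly, let $6_1, 6_2$ be the candidate positions of vertex $6$, mutual reflections across the great circle $\ell_{13}$ through $1$ and $3$. Since $\deg p_{56} = 4$, all four combinations $(5_i, 6_j)$ give points of $\curveC$, so $d_{S^2}(5_i, 6_j) = \lambda(\{5,6\})$ for every choice of $i,j$. From $d_{S^2}(5_1, 6_1) = d_{S^2}(5_1, 6_2)$ together with the reflection exchanging $6_1 \leftrightarrow 6_2$, the point $5_1$ lies on the perpendicular bisecting great circle of $\{6_1,6_2\}$, which is exactly $\ell_{13}$; symmetric computations give $5_2 \in \ell_{13}$ and $6_1, 6_2 \in \ell_{24}$. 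Because $5_1$ and its reflection $5_2$ across $\ell_{24}$ both lie on $\ell_{13}$, the great circle $\ell_{13}$ is invariant under that reflection, forcing either $\ell_{13} = \ell_{24}$ or $\ell_{13} \perp \ell_{24}$. The coincidence $\ell_{13} = \ell_{24}$ would place all six vertices on a single great circle, where the edge distances rigidly determine the configuration up to the isometries of the circle, all of which lie in $\SO_3$; after quotienting by $\SO_3$ no one-parameter family of essentially distinct realizations can exist, contradicting properness of $\curveC$. Hence $\ell_{13} \perp \ell_{24}$.

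Since this analysis applies at a general point of $\curveC$, at every proper configuration of the motion the odd vertices $\{1,3,5\}$ lie on a great circle $L_1$ and the even vertices $\{2,4,6\}$ lie on a great circle $L_2$ orthogonal to $L_1$. I conclude by choosing a rotational frame in which $L_1$ is the equator and $L_2$ is a fixed meridian. This choice is globally consistent along the motion because $L_1 = \ell_{13}$ and $L_2 = \ell_{24}$ depend algebraically on the configuration and never degenerate in a proper motion, since no pair among $\{1,3\}$ or among $\{2,4\}$ ever coincides or becomes antipodal. In this frame the six vertices slide along two fixed orthogonal great circles for every value of the motion parameter, which is exactly the definition of a Dixon~$1$ motion. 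The most delicate step I anticipate is the propagation from orthogonality at a general point of $\curveC$ to a simultaneously consistent gauge along the whole motion curve; this hinges crucially on properness, which prevents the defining great circles from becoming ill-defined anywhere on $\curveC$.
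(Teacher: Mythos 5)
Your proof is correct and follows essentially the same route as the paper's: deduce $\deg p_{56}=4$ from the factorization through $\curveC_5$, exploit the product structure of the four-point fiber to obtain the equidistance relations, conclude cocircularity of $\{1,3,5\}$ and $\{2,4,6\}$, and derive orthogonality of the two great circles from the reflection symmetry. One small remark: to exclude $\ell_{13}=\ell_{24}$ you appeal to rigidity of cocircular realizations, which is true but unproved in your write-up; a shorter route, closer to the spirit of the paper's argument, is that $5_1\neq 5_2$ both lying on $\ell_{13}=\ell_{24}$ would make them fixed by the reflection across $\ell_{24}$ that is supposed to swap them, an immediate contradiction.
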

\begin{proof}
 Since $p_{56}$ is the composition of~$p_5$ and~$q_6^5$, and since both of 
these maps can be either birational or \twotoone, it follows that the only 
possibility if $\deg p_{56} \geq 3$ is that $\deg p_{56} = 4$. Pick a realization 
$(R_1, \dotsc, R_6) \in (S^2)^6$ of the vertices $1, 2, 3, 4, 5, 6$ in the motion~$\curveC$. 
Let $R_5'$ and $R_6'$ be the two other points appearing in the four realizations 
given by $p_{56}^{-1}\bigl( p_{56}(c) \bigr)$. Then we have
\begin{align*}
 d_{S^2}(R_1,R_6) &= d_{S^2}(R_1,R_6')\,, \\
 d_{S^2}(R_3,R_6) &= d_{S^2}(R_3,R_6')\,, \\ 
 d_{S^2}(R_5,R_6) &= d_{S^2}(R_5,R_6')\,, 
\end{align*}
and this shows that $R_1$, $R_3$, and~$R_5$ are cocircular\footnote{Here by saying that the three points are \emph{cocircular} we mean that they lie on a common geodesic on the sphere. The terminology is motivated by the fact that the geodesics are exactly the great circles.}, because this is the only case where three distinct and not antipodal points can be at the same distances from two other points. By a symmetric 
argument, $R_2$, $R_4$, and~$R_6$ are 
cocircular as well. Moreover, since
\[
 d_{S^2}(R_5',R_6) = d_{S^2}(R_5',R_6') = d_{S^2}(R_5,R_6') \,,
\]
then also $R_6'$ is cocircular with~$R_2$, $R_4$, and~$R_6$.
We now need to prove that these two great circles are orthogonal. Notice that 
the two positions~$R_6$ and~$R_6'$ must be equidistant 
from the great circle~$\wideparen{R_1R_3R_5}$; hence the great 
circle $\wideparen{R_6R_6'}$ is orthogonal 
to~$\wideparen{R_1R_3R_5}$. This shows then that 
$\wideparen{R_2R_4R_6}$ is orthogonal 
to~$\wideparen{R_1R_3R_5}$, and so we have a Dixon~$1$ motion. 
\end{proof}

So from now on we can (and we will) suppose $\deg p_{ij} \leq 2$ for all $i, 
j$. Before proceeding further in the analysis of the mobility of the 
whole~$K_{3,3}$, we focus on how its subgraphs isomorphic to~$K_{2,2}$ may 
move. 

\subsection{Mobility of quadrilaterals}
\label{K33:quadrilaterals}

A thorough discussion of the mobility of quadrilaterals on the sphere, and in particular of their configuration spaces and their irreducible components, is provided in~\cite{Gibson1988a, Gibson1988b}. 
We introduce the following nomenclature for mobile subgraphs of~$K_{3,3}$ isomorphic to~$K_{2,2}$. 
Recall from \cref{definition:spherical_distance} that for~$T, U \in S^2$ we denote $\delta_{S^2}(T,U) := \left\langle T, U \right\rangle$.

\begin{definition}
\label{definition:classification_quadrilaterals}
 Given a proper motion~$\curveC \subseteq \M_{0,12}$ of~$K_{3,3}$ with edge length assignment~$\lambda$, we define 
five families of proper motions of subgraphs of~$K_{3,3}$ isomorphic to~$K_{2,2}$. We call these graphs \emph{quadrilaterals}. To give the definitions, we focus for simplicity on the subgraph~$H_{56}$ induced by the vertices $\{1,2,3,4\}$. Let 
$\curveD_{56} \subseteq \M_{0,8}$ be the configuration curve of~$H_{56}$ defined by the edge lengths induced by~$\lambda$. Note that $\curveD_{56}$ may have several irreducible components. We call the 
motion~$\curveC_{56} := p_{56}(\curveC)$ of~$K_{2,2}$ (which is irreducible by construction):
 \begin{description}
  \item[\caseG:] \emph{general} if and only if $\curveD_{56} = \curveC_{56}$. Here, all the maps $r_1^{56}$, $r_2^{56}$, $r_3^{56}$, and~$r_4^{56}$ are~\twotoone.
  \item[\caseO:] \emph{odd deltoid} if and only if $\curveD_{56}$ has two 
				components, $\curveD_{56} = \curveC_{56} \cup \curveZ$, and $\curveZ$ is a 
				(degenerate) motion. The component $\curveZ$ entirely lies on the boundary; its elements correspond to realizations where the odd vertices coincide or are antipodal (see \cref{figure:degeneratedaltoid}).
				\begin{figure}[ht]
				  \begin{center}
				    \begin{tikzpicture}
				      \node at (0,0) {\includegraphics[width=.3\textwidth]{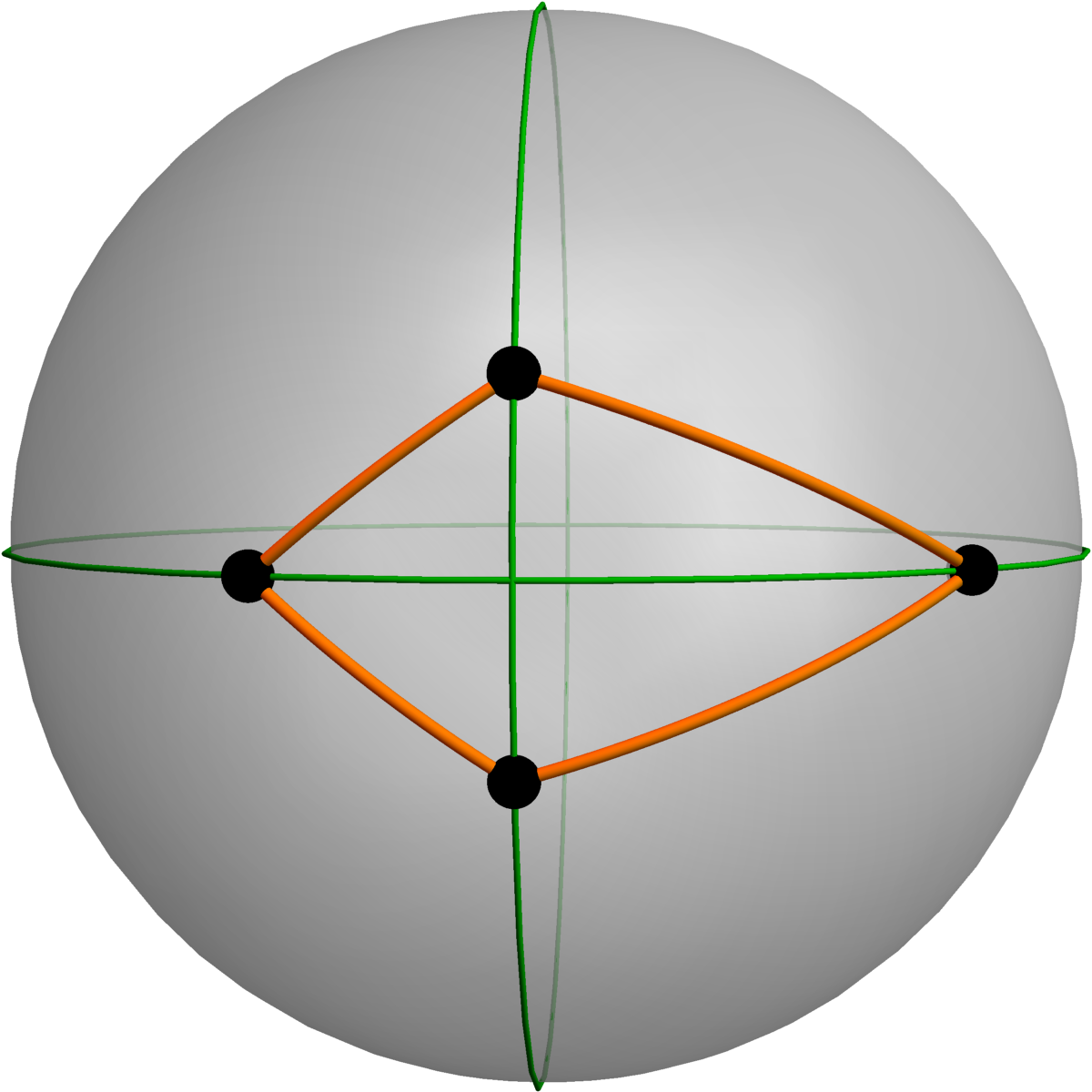}};
				      \node at (0.21,-1.1) {1};
				      \node at (-1.2,0.2) {2};
				      \node at (0.21,0.85) {3};
				      \node at (1.6,0.2) {4};
				    \end{tikzpicture}
				    \begin{tikzpicture}
				      \node at (0,0) {\includegraphics[width=.3\textwidth]{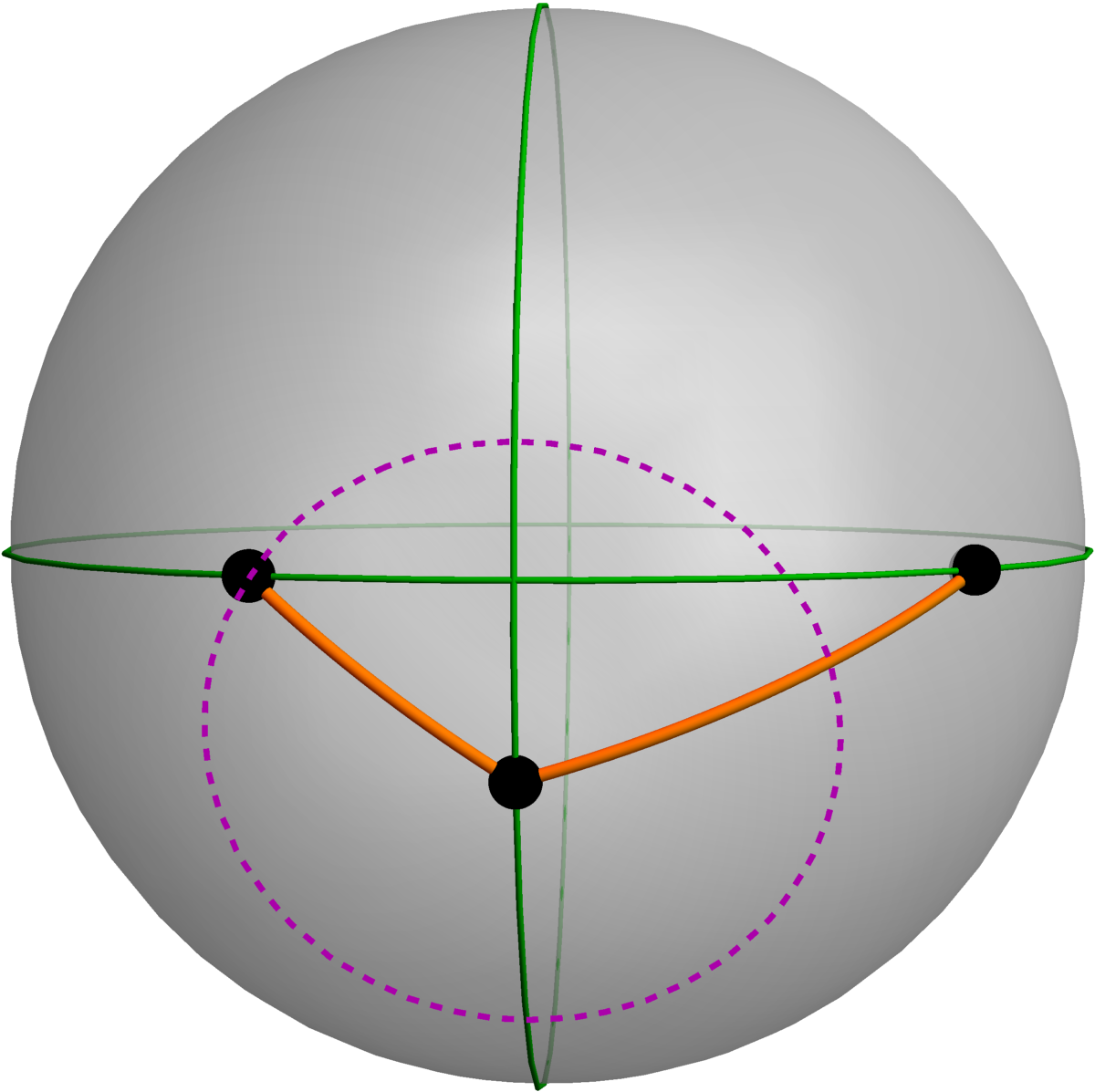}};
				      \node at (0.21,-1.1) {1=3};
				      \node at (-1.2,0.2) {2};
				      \node at (1.6,0.2) {4};
				    \end{tikzpicture}
				    \begin{tikzpicture}
				      \node at (0,0) {\includegraphics[width=.3\textwidth]{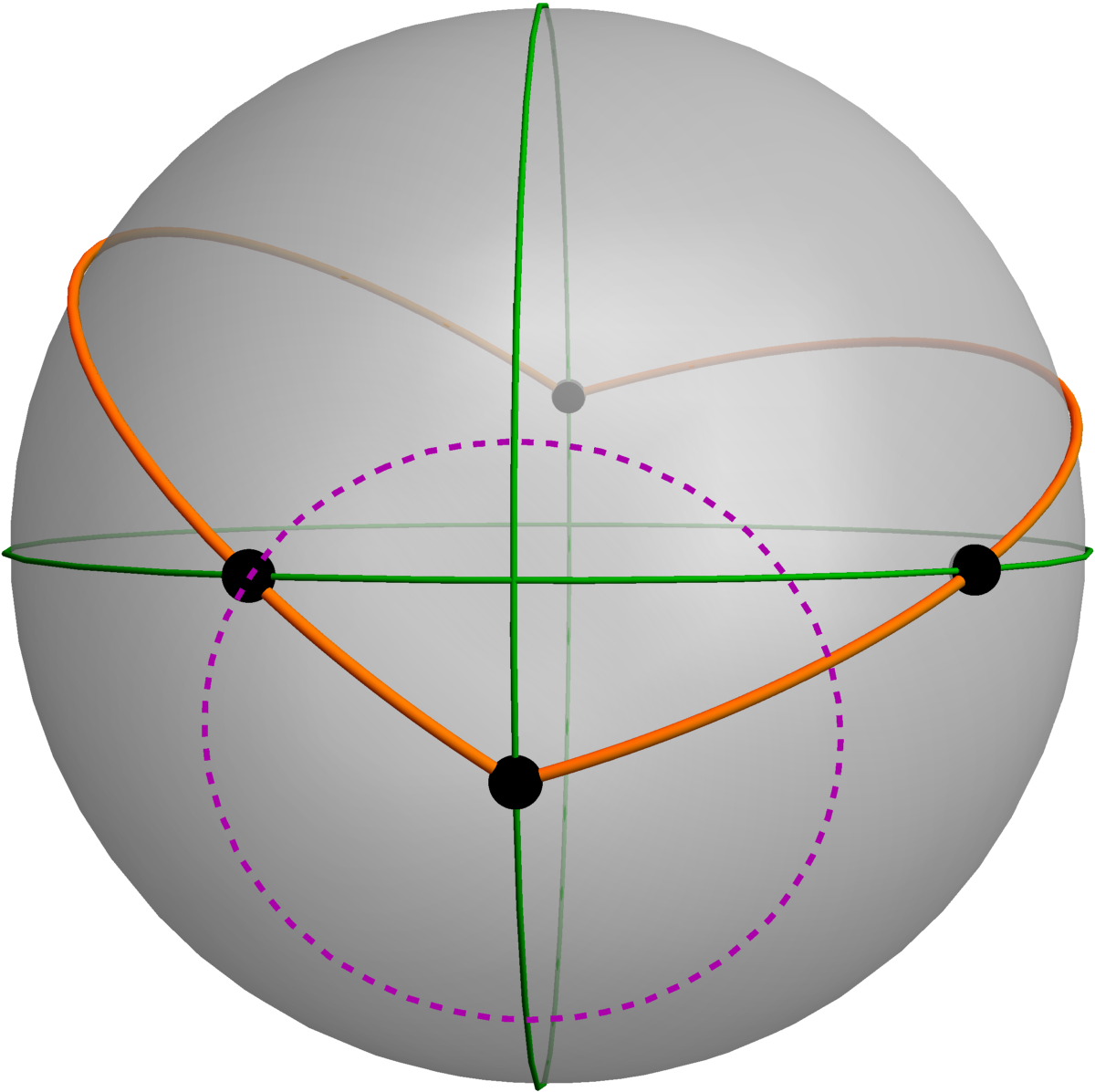}};
				      \node at (-0.3,-1.1) {1};
				      \node at (-1.1,0.2) {2};
				      \node[black!70!white] at (0.21,0.85) {3};
				      \node at (1.5,0.2) {4};
				    \end{tikzpicture}
				  \end{center}
				  \caption{An odd deltoid (left) and two degenerate deltoids where vertices 1 and 3 coincide (middle) or are antipodal (right).
				  Vertex~2 may move along the dashed circle.}
				  \label{figure:degeneratedaltoid}
				\end{figure}
				Here, we have $\deg r_i^{56} = 1$ for $i$ odd, and $\deg r_i^{56} = 2$ for $i$ even.
				Moreover, for all realizations~$(R_1, \dotsc, R_4) \in \bigl(S^2\bigr)^4$ corresponding to elements of~$\curveC_{56} \cap \mscr{M}_{0,8}$, we have that:
  \[
	\delta_{S^2} (1,2) = \alpha \delta_{S^2} (2,3) \,, 
	\quad 
	\delta_{S^2} (3,4) = \alpha \delta_{S^2} (1,4) \,,
  \]
  where $\alpha \in \{1, -1\}$, and $\delta_{S^2} (i,j)$ is a shorthand for $\delta_{S^2} (R_i,R_j)$. The two possible cases are depicted in the 
	following figure, where edges are labeled by the values of~$\delta_{S^2}$ on the corresponding realizations.
	Notice that we can pass from one case to the other by swapping the realization of vertex~$3$ or~$1$ with its antipode.
	For the degenerate component we can see these two cases in \cref{figure:degeneratedaltoid}.
	\begin{center}
	  \begin{tikzpicture}
	    \begin{scope}
	      \node[vertex,label=left:$1$] (1) at (-1,0) {};
	      \node[vertex,label=below:$2$] (2) at (0,-2) {};
	      \node[vertex,label=right:$3$] (3) at (1,0) {};
	      \node[vertex,label=above:$4$] (4) at (0,1) {};
	      \draw[edge] (1)to node[below left] {$a$} (2) (2)to node[below right] {$a$} (3)
	      (3)to node[above right] {$b$} (4) (4)to node[above left] {$b$} (1);
	    \end{scope}
	    \begin{scope}[xshift=4cm]
	      \node[vertex,label=left:$1$] (1) at (-1,0) {};
	      \node[vertex,label=below:$2$] (2) at (0,-2) {};
	      \node[vertex,label=right:$3$] (3) at (1,0) {};
	      \node[vertex,label=above:$4$] (4) at (0,1) {};
	      \draw[edge] (1)to node[below left] {$a$} (2) (2)to node[below right] {$-a$} (3)
	      (3)to node[above right] {$-b$} (4) (4)to node[above left] {$b$} (1);
	    \end{scope}
	  \end{tikzpicture}
	\end{center}
  For an odd deltoid there exists an involution of the sphere fixing the even 
vertices and swapping the odd ones.
  \item[\caseE:] \emph{even deltoid} if and only if the condition for the odd 
	deltoid holds after interchanging even and odd vertices.
    \begin{center}
	  \begin{tikzpicture}
	    \begin{scope}[rotate=90]
	      \node[vertex,label=below:$2$] (1) at (-1,0) {};
	      \node[vertex,label=left:$1$] (2) at (0,1) {};
	      \node[vertex,label=above:$4$] (3) at (1,0) {};
	      \node[vertex,label=right:$3$] (4) at (0,-2) {};
	      \draw[edge] (1)to node[below left] {$a$} (2) (2)to node[above left] {$a$} (3)
	      (3)to node[above right] {$b$} (4) (4)to node[below right] {$b$} (1);
	    \end{scope}
	    \begin{scope}[xshift=5cm,rotate=90]
	      \node[vertex,label=below:$2$] (1) at (-1,0) {};
	      \node[vertex,label=left:$1$] (2) at (0,1) {};
	      \node[vertex,label=above:$4$] (3) at (1,0) {};
	      \node[vertex,label=right:$3$] (4) at (0,-2) {};
	      \draw[edge] (1)to node[below left] {$a$} (2) (2)to node[above left] {$-a$} (3)
	      (3)to node[above right] {$-b$} (4) (4)to node[below right] {$b$} (1);
	    \end{scope}
	  \end{tikzpicture}
	\end{center}
  \item[\caseR:] \emph{rhomboid}%
				\footnote{Here we appeal to the terminology used by Euclid in Book~I, 
Definition~$22$ of the \emph{Elements}, where \emph{rhomboid} denotes a quadrilateral whose opposite sides and 
angles are equal.} if and only if the configuration curve $\curveD_{56}$ 
has two components, $\curveD_{56} = \mcal{Y}_1 \cup \mcal{Y}_2$, both non-degenerate, 
namely no two vertices coincide or are antipodal, and $\curveC_{56}$ is one of them. 
Here, all the maps $r_1^{56}, r_2^{56}, r_3^{56}, r_4^{56}$ are birational. 
For the realizations corresponding to the elements of~$\curveC_{56} \cap \mscr{M}_{0,8}$, we have:
  \[
	\delta_{S^2} (1,2) = \alpha \cdot \delta_{S^2} (3,4) \,, 
	\quad 
	\delta_{S^2} (1,4) = \alpha \cdot \delta_{S^2} (2,3) \,, \\
  \]
  where $\alpha \in \{1, -1\}$. The two possible cases are depicted in the 
following figure: notice that we can pass from one case to the other by 
swapping the realization of vertex~$3$ or~$1$ with its antipode.
  \begin{center}
	  \begin{tikzpicture}
	    \begin{scope}[xscale=1.3]
	      \node[vertex,label=left:$1$] (1) at (-1,0) {};
	      \node[vertex,label=left:$4$] (2) at (-0.8,2) {};
	      \node[vertex,label=right:$3$] (3) at (1.2,1.8) {};
	      \node[vertex,label=right:$2$] (4) at (1,-0.2) {};
	      \draw[edge] (1)to node[left] {$b$} (2) (2)to node[above] {$a$} (3) (3)to node[right] {$b$} (4) (4)to node[below] {$a$} (1);
	    \end{scope}
	    \begin{scope}[xshift=5cm,xscale=1.3]
	      \node[vertex,label=left:$1$] (1) at (-1,0) {};
	      \node[vertex,label=left:$4$] (2) at (-0.8,2) {};
	      \node[vertex,label=right:$3$] (3) at (1.2,1.8) {};
	      \node[vertex,label=right:$2$] (4) at (1,-0.2) {};
	     \draw[edge] (1)to node[left] {$b$} (2) (2)to node[above] {$-a$} (3) (3)to node[right] {$-b$} (4) (4)to node[below] {$a$} (1);
	    \end{scope}
	  \end{tikzpicture}
	\end{center}
For a rhomboid there exists an involution of the sphere swapping both the even 
and the odd vertices. This involution, however, has different nature for the 
two components~$\mcal{Y}_1$ and~$\mcal{Y}_2$: for one it is given by a rotation, while for the other by a 
reflection.
  \begin{figure}[H]
   \begin{tabular}{ccc}
    \includegraphics[width=.3\textwidth]{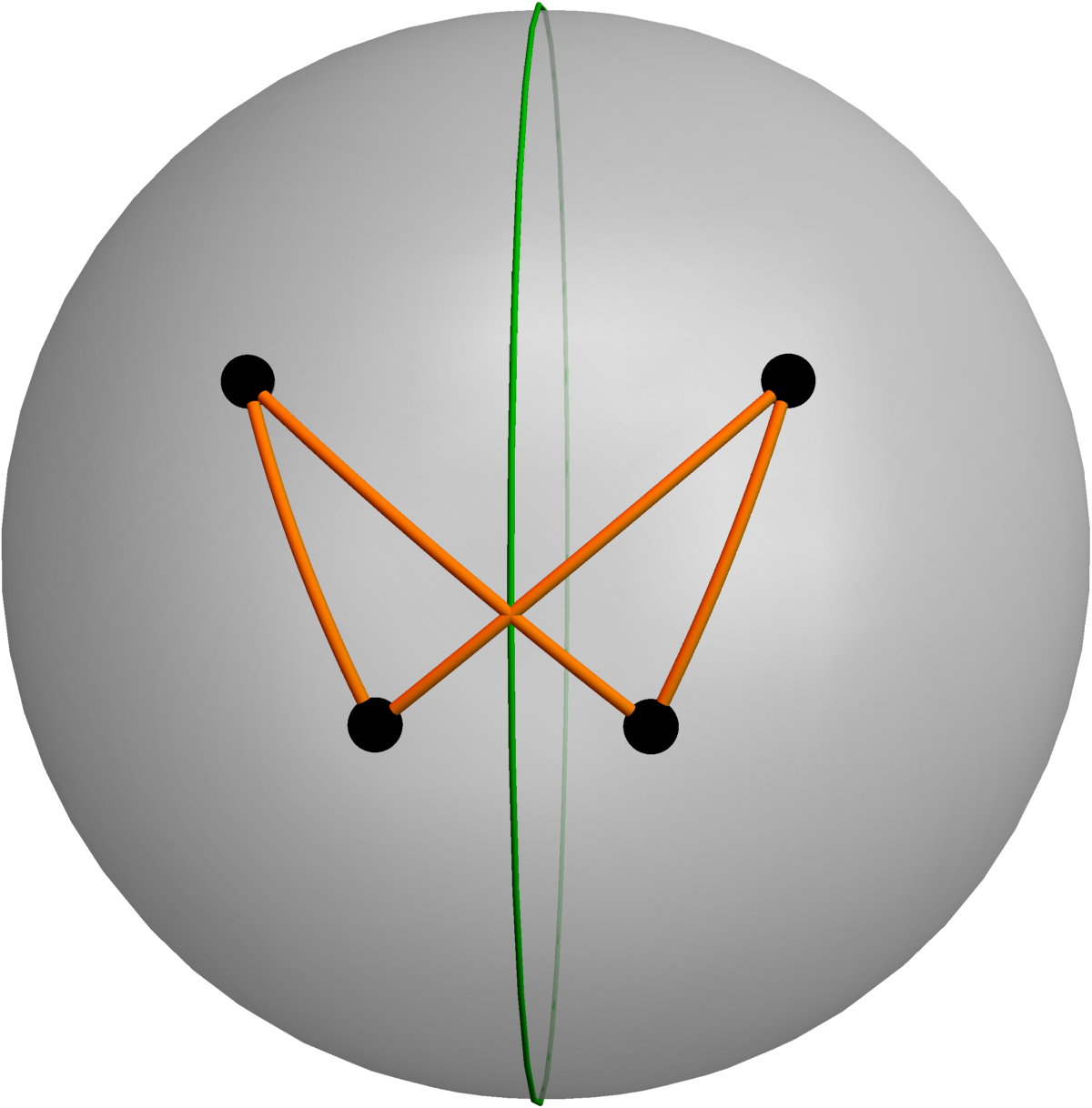} &
    \includegraphics[width=.3\textwidth]{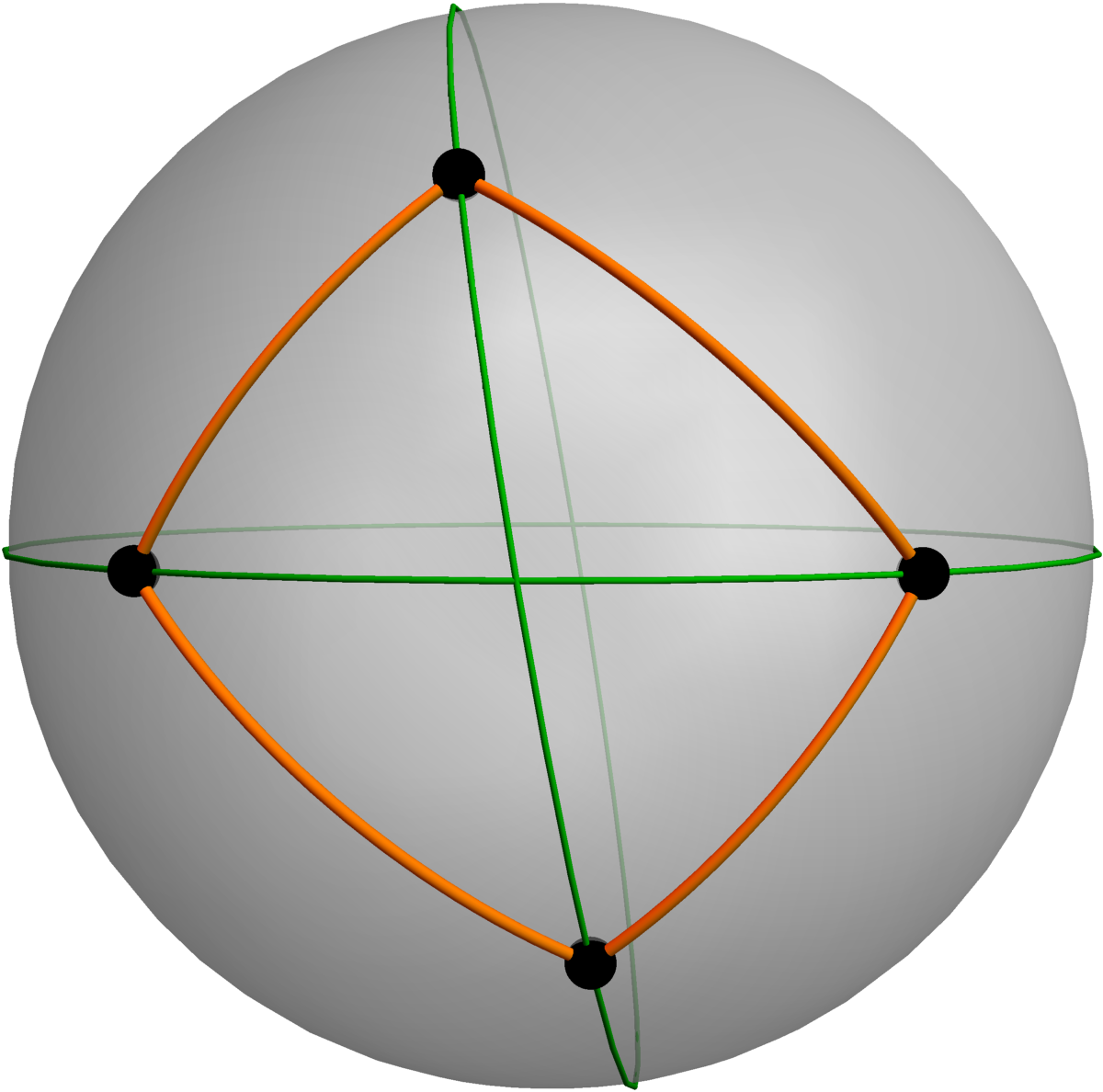} &
    \includegraphics[width=.3\textwidth]{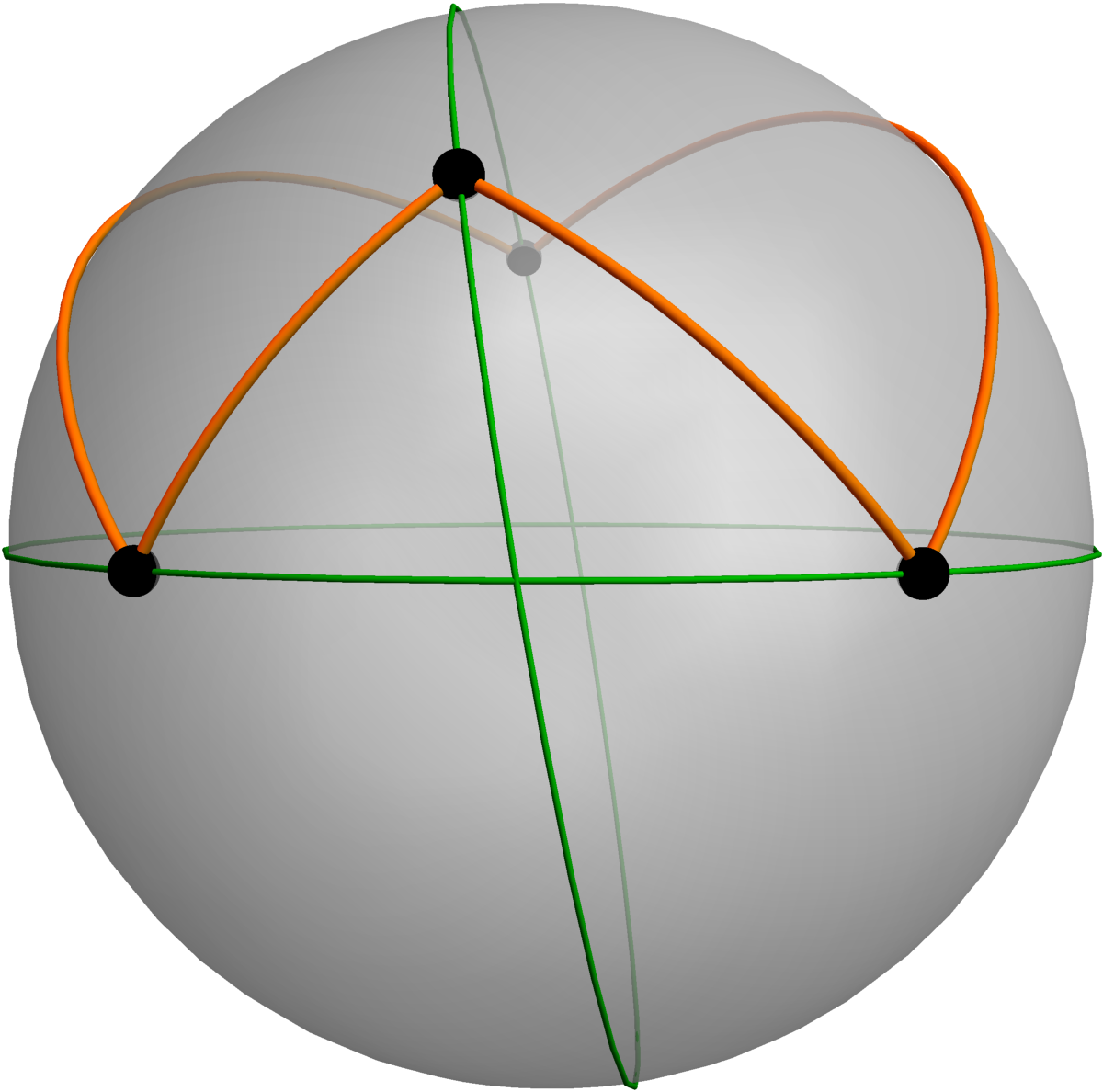}
   \end{tabular}
   \caption{Three rhomboids: The rhomboid on the left has a symmetry plane, which intersects the sphere in the green circle.
    The rhomboid in the middle has a symmetry line, which meets the sphere in the intersection of the two diagonals. The rhomboid
    on the right is obtained from the one in the middle by replacing one of its vertices by its antipodal point.}
  \end{figure}
  \item[\caseL:] \emph{lozenge} if and only if the configuration space~$\curveD_{56}$ 
has three components, $\curveD_{56} = \curveC_{56} \cup \curveZ_1 \cup 
\curveZ_2$, and both $\curveZ_1$ and $\curveZ_2$ are degenerate, namely either the even or the odd vertices coincide. Here, 
all the maps $r_1^{56}, r_2^{56}, r_3^{56}, r_4^{56}$ are birational. Moreover, 
for the realizations corresponding to the elements of~$\curveC_{56} \cap \mscr{M}_{0,8}$, we have that:
  \[
   \delta_{S^2}(1,2) = 
   \alpha \cdot \delta_{S^2}(2,3) = 
   \beta \cdot \delta_{S^2}(3,4) = 
   \gamma \cdot \delta_{S^2}(1,4) \neq 0  \,,
  \]
  where $(\alpha, \beta, \gamma) \in \{(1,1,1),(-1,-1,1), (-1,1,-1), (1,-1,-1)\}$. 
  Notice that the condition on the lengths~$\delta_{S^2}$ being nonzero follows from the hypothesis that no two vertices coincide or are antipodal. The 
four possible cases are depicted in the following figure: we can 
pass from one case to the others by swapping the realizations of some vertices with 
their antipodes.
   \begin{center}
	  \begin{tikzpicture}[scale=0.72]
	    \begin{scope}[]
	      \node[vertex,label=left:$1$] (1) at (-1,0) {};
	      \node[vertex,label=left:$4$] (2) at (-0.8,2) {};
	      \node[vertex,label=right:$3$] (3) at (1.2,1.8) {};
	      \node[vertex,label=right:$2$] (4) at (1,-0.2) {};
	      \draw[edge] (1)to node[left] {$a$} (2) (2)to node[above] {$a$} (3) (3)to node[right] {$a$} (4) (4)to node[below] {$a$} (1);
	    \end{scope}
	    \begin{scope}[xshift=4cm]
	      \node[vertex,label=left:$1$] (1) at (-1,0) {};
	      \node[vertex,label=left:$4$] (2) at (-0.8,2) {};
	      \node[vertex,label=right:$3$] (3) at (1.2,1.8) {};
	      \node[vertex,label=right:$2$] (4) at (1,-0.2) {};
	     \draw[edge] (1)to node[left] {$a$} (2) (2)to node[above] {$-a$} (3) (3)to node[right] {$-a$} (4) (4)to node[below] {$a$} (1);
	    \end{scope}
	     \begin{scope}[xshift=8cm]
	      \node[vertex,label=left:$1$] (1) at (-1,0) {};
	      \node[vertex,label=left:$4$] (2) at (-0.8,2) {};
	      \node[vertex,label=right:$3$] (3) at (1.2,1.8) {};
	      \node[vertex,label=right:$2$] (4) at (1,-0.2) {};
	      \draw[edge] (1)to node[left] {$-a$} (2) (2)to node[above] {$a$} (3) (3)to node[right] {$-a$} (4) (4)to node[below] {$a$} (1);
	    \end{scope}
	    \begin{scope}[xshift=12cm]
	      \node[vertex,label=left:$1$] (1) at (-1,0) {};
	      \node[vertex,label=left:$4$] (2) at (-0.8,2) {};
	      \node[vertex,label=right:$3$] (3) at (1.2,1.8) {};
	      \node[vertex,label=right:$2$] (4) at (1,-0.2) {};
	     \draw[edge] (1)to node[left] {$a$} (2) (2)to node[above] {$a$} (3) (3)to node[right] {$-a$} (4) (4)to node[below] {$-a$} (1);
	    \end{scope}
	  \end{tikzpicture}
	\end{center}
  \begin{figure}[H]
  \centering
   \begin{tabular}{cc}
    \includegraphics[width=.3\textwidth]{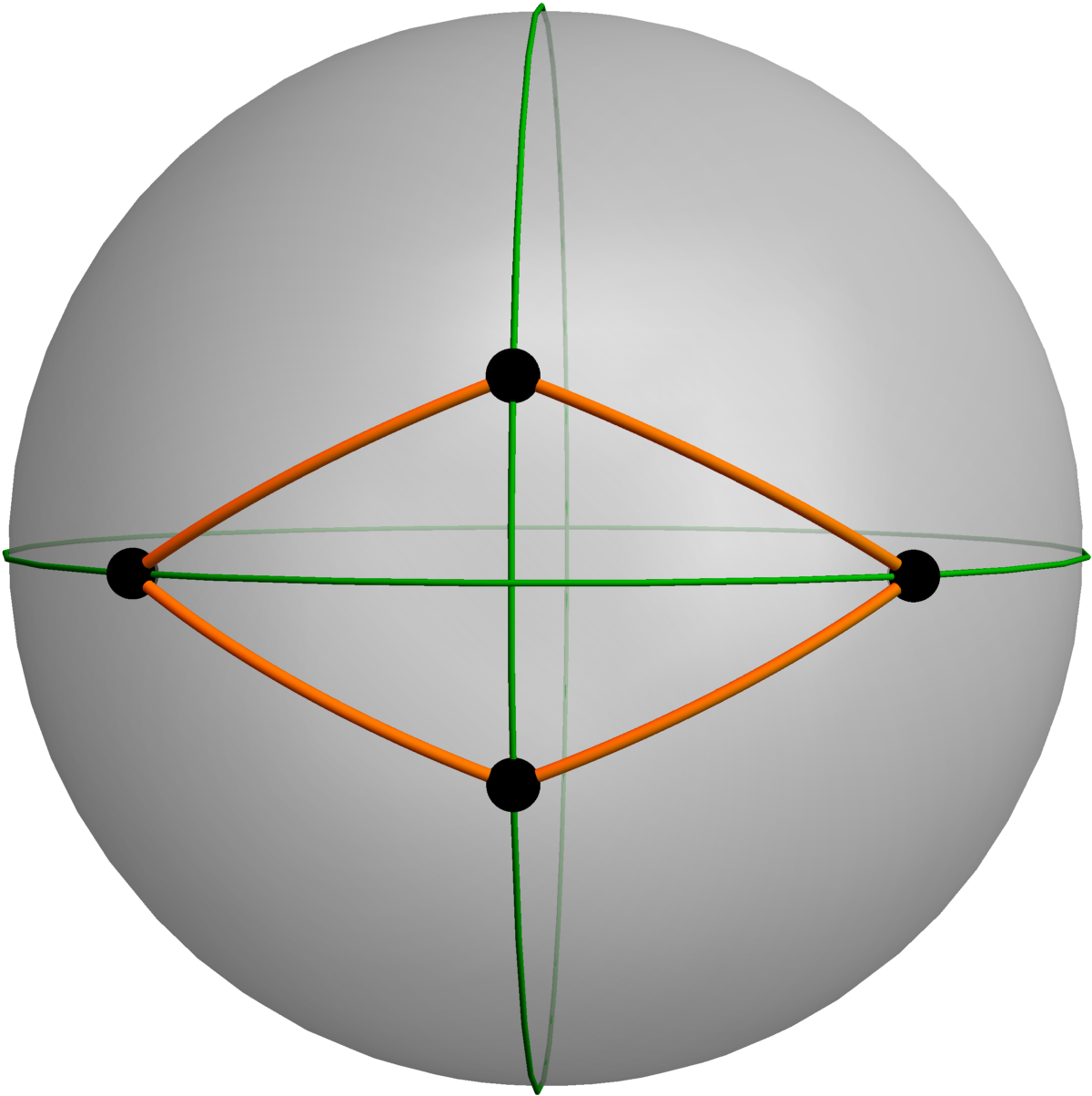} &
    \includegraphics[width=.3\textwidth]{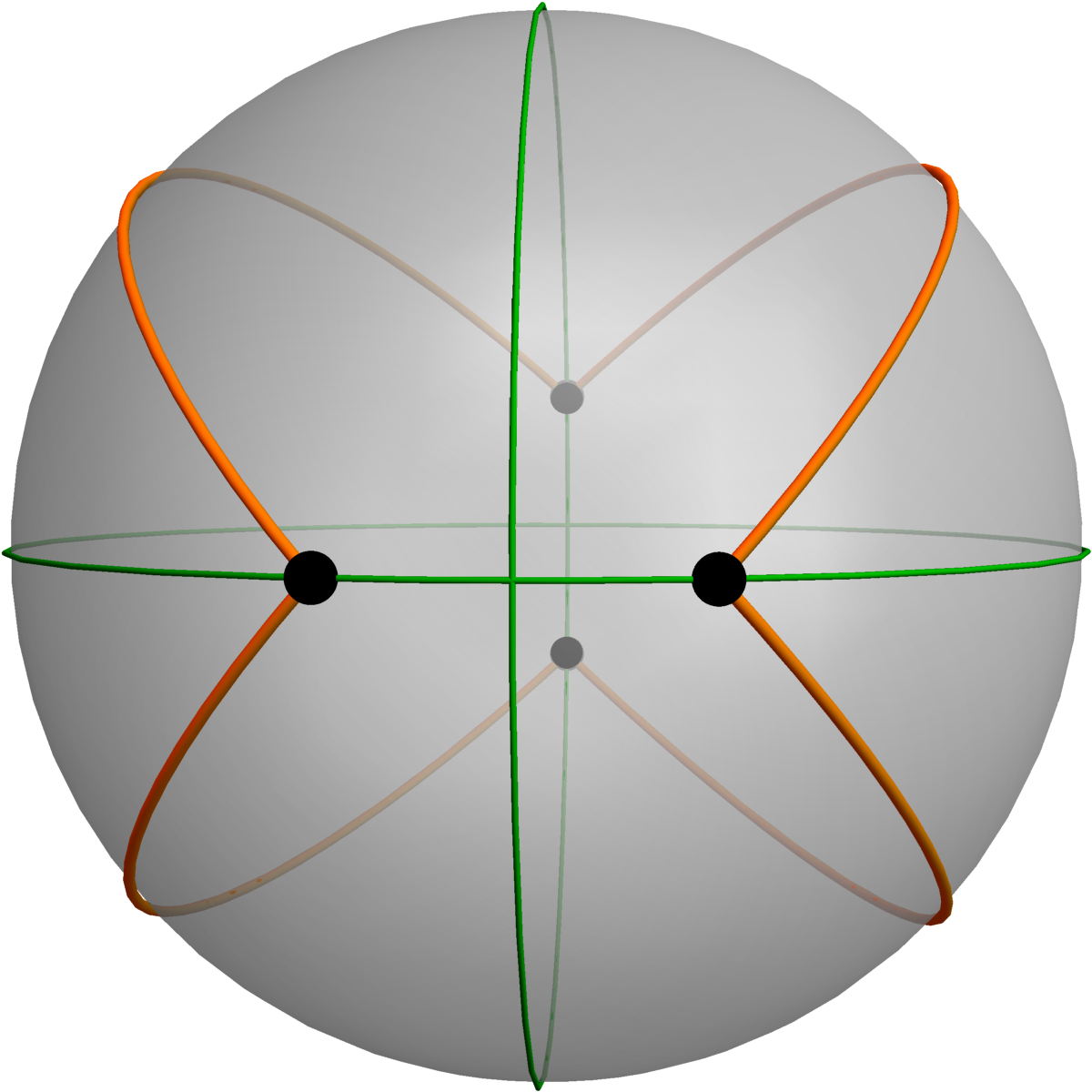}
   \end{tabular}
   \caption{Two lozenges with four equal side lengths. The diagonals are orthogonal and bisect each other.}
  \end{figure}
\end{description}
\end{definition}

\begin{remark}
 A lozenge is not a special case of a rhomboid: in fact, in a 
rhomboid the configuration curve has only two components, and this excludes the 
situation where all edge lengths are equal (up to sign). 
\end{remark}

The fact that the classification provided by 
\cref{definition:classification_quadrilaterals} is complete relies on 
the hypothesis of considering only real proper motions.

\begin{remark}
\label{remark:4cycle}
 A case by case analysis in the classification of
\cref{definition:classification_quadrilaterals} reveals that 
$\deg r_i^{k\ell} = \deg r_j^{k\ell}$ if $i$ and $j$ are vertices of the same parity 
of a $4$-cycle.
\end{remark}

\textbf{Cuts for $K_{2,2}$.}
Due to \cref{proposition:bond_NAP}, there exist four possible cuts~$(I,J)$
for~$K_{2,2}$ (up to swapping $I$ with $J$, and the $P$'s with $Q$'s) that induce a NAP-coloring. They are the following:
\begin{align*}
 \cutou &:= (P_1 Q_1 \, P_2 P_4 | P_3 Q_3 \, Q_2 Q_4) \,, \\
 \cuteu &:= (P_2 Q_2 \, P_1 P_3 | P_4 Q_4 \, Q_1 Q_3) \,, \\
 \cutom &:= (P_1 Q_1 \, P_2 Q_4 | P_3 Q_3 \, Q_2 P_4) \,, \\
 \cutem &:= (P_2 Q_2 \, P_1 Q_3 | P_4 Q_4 \, Q_1 P_3) \,.
\end{align*}
Here, ``$\cuts{o}$'' stands for \emph{odd} vertices being \emph{separated}, 
namely $P_1, Q_1$ belong to~$I$, while $P_3, Q_3$ belong to~$J$. Analogously, 
``$\cuts{e}$'' stands for \emph{even} vertices being \emph{separated}. The 
letters ``$\cuts{m}$'' and ``$\cuts{u}$'' stand for \emph{mixed} and 
\emph{unmixed}, respectively, and describe the relation between the $P$'s and 
$Q$'s in the remaining indices. We use the notation~$\divou$ for the divisor 
corresponding to~$\cutou$, and similarly for the other three cuts.

Our first goal is to understand how a proper motion of a subgraph of~$K_{3,3}$ isomorphic to~$K_{2,2}$ intersects the 
divisors  $\divou, \dotsc, \divem$ in~$\M_{0,8}$. To this end, given a proper 
motion~$\curveC' \subseteq \M_{0,8}$ of~$K_{2,2}$, we define the numbers:
\begin{equation}
\label{equation:mu}
 \begin{array}{ll}
 \mu(\cutou) := \deg( \divou|_{\curveC'} ), &
 \mu(\cutom) := \deg( \divou|_{\curveC'} ), \\
 \mu(\cuteu) := \deg( \diveu|_{\curveC'} ), &
 \mu(\cutem) := \deg( \divem|_{\curveC'} ).
 \end{array}
\end{equation}
Here, the symbol~$\divou|_{\curveC'}$ denotes the restriction of the 
divisor~$\divou$ to~$\curveC'$, and analogously for the other similar 
symbols.
We are going to collect all the possible values for the numbers~$\mu$ in 
\cref{table:mu}.

Let us suppose that $\curveC'$ is general, namely it is the whole 
configuration space of~$K_{2,2}$. Then the curve~$\curveC'$ is defined by 
the four equations
\begin{align*}
 \mathrm{cr}(P_1, Q_1, P_2, Q_2) &= \lambda_{12}, & \mathrm{cr} (P_1, Q_1, P_4, Q_4) &= \lambda_{14}, \\
 \mathrm{cr}(P_2, Q_2, P_3, Q_3) &= \lambda_{23}, & \mathrm{cr} (P_3, Q_3, P_4, Q_4) &= \lambda_{34},
\end{align*}
where the numbers $\lambda_{12}, \dotsc, \lambda_{34}$ are the lengths of the 
corresponding edges of~$K_{2,2}$.
We restrict these equations to the divisor $\divou$. 
This divisor is isomorphic to $\M_{0,5} \times \M_{0,5}$, more precisely 
$\M_{0,P_1 Q_1 P_2 P_4 \ast} \times \M_{0,P_3 Q_3 Q_2 Q_4 \ast}$ (see \cref{proposition:vital_product}). The four equations for the restriction of~$\curveC'$ to $\M_{0,P_1 Q_1 P_2 P_4 \ast} \times \M_{0,P_3 Q_3 
Q_2 Q_4 \ast}$ become two for the left factor, and two for the right one:
\begin{align*}
 \text{left:}  && \mathrm{cr} (P_1, Q_1, P_2, \ast) &= \lambda_{12}, & 
                  \mathrm{cr} (P_1, Q_1, P_4, \ast) &= \lambda_{14}, \\
 \text{right:} && \mathrm{cr} (\ast, Q_2, P_3, Q_3) &= \lambda_{23}, & 
                  \mathrm{cr} (P_3, Q_3, \ast, Q_4) &= \lambda_{34}.
\end{align*}
This system of equations admits a single solution for general values of $\lambda_{12}, \dotsc, \lambda_{34}$. In fact, let us focus on the left factor, and let us first look for solutions in~$\mscr{M}_{0,8}$ (namely, not on the boundary). Then by the action of $\pgl(2,\C)$ we can suppose that $P_1, Q_1$ and $\ast$ are fixed. The two equations determine uniquely $P_2$ and $P_4$, and similarly for the right factor. One can then check that there are no solutions on the boundary. 

We can also use the algorithm developed in~\cite{GalletGraseggerSchicho} to 
compute the degree of the intersection of these equations for general values of 
$\lambda_{12}, \dotsc, \lambda_{34}$, which gives \mbox{$\mu(\cutou)=1$}.
Similarly, we conclude that 
\[
 \mu(\cutou) = 
 \mu(\cutom) = 
 \mu(\cuteu) = 
 \mu(\cutem) = 1\,.
\]

Let us now consider the odd deltoid case. Here, we have a degenerate 
component~$\curveZ$ in the configuration space where the vertices~$1$ and~$3$ 
coincide or are antipodal. This component~$\curveZ$ is then contained in the 
boundary of~$\M_{0,8}$, and in particular, when $1$ and $3$ coincide, in the 
subvariety of the boundary whose general element is:
\begin{center}
\begin{tikzpicture}
  \draw[curveline] (-1,0) -- (-1,3);
  \draw[curveline] (1,0) -- (1,3);
  \draw[curveline] (-4,1.5) -- (4,1.5);
  \draw (-1,0.5) node[markedpoint, label=east:$\mathbf{P_1}$] {};
  \draw (-1,2.5) node[markedpoint, label=east:$\mathbf{P_3}$] {};
  \draw (1,0.5) node[markedpoint, label=west:$\mathbf{Q_1}$] {};
  \draw (1,2.5) node[markedpoint, label=west:$\mathbf{Q_3}$] {};
  \draw (-3,1.5) node[markedpoint, label=north:$\mathbf{P_2}$] {};
  \draw (-2,1.5) node[markedpoint, label=north:$\mathbf{Q_2}$] {};
  \draw (3,1.5) node[markedpoint, label=north:$\mathbf{Q_4}$] {};
  \draw (2,1.5) node[markedpoint, label=north:$\mathbf{P_4}$] {};
 \end{tikzpicture}
\end{center}
Recall in fact from page~\pageref{page:boundary} that when two marked points 
come together (as, in this case, the left lifts of the realizations 
of vertices~$1$ and~$3$, and also the right lifts) then we get a stable curve
with a new component containing the two marked points. Here, this happens 
for the marked points~$P_1$ and~$P_3$, and~$Q_1$ and~$Q_3$. 

Notice that the subvariety from above is the intersection of the two divisors
\[
 D_{P_1 P_3 | P_2 Q_2 P_4 Q_4 Q_1 Q_3} 
 \cap 
 D_{P_1 P_3 P_2 Q_2 P_4 Q_4 | Q_1 Q_3} 
 \,.
\]
We want to understand how $\curveZ$ intersects $\divou$. We have that $\curveZ 
\cdot \divou = 0$ as a consequence of Keel's quadratic relations in the 
description of the Chow ring of~$\M_{0,8}$ (see 
\cite[Section~4, Theorem~1]{Keel1992}). In fact, we have that $\divou \cdot 
D_{P_1 P_3 P_2 Q_2 P_4 Q_4 | Q_1 Q_3} = 0$ since, for example, $P_1$ and $P_3$ 
are split by the partition of~$\divou$, but they are not by the partition 
of $D_{P_1 P_3 P_2 Q_2 P_4 Q_4 | Q_1 Q_3}$. Similarly, we get $\divou \cdot D_{P_1 P_3 | P_2 Q_2 P_4 Q_4 Q_1 Q_3} = 0$.
For the same reason, we have $\curveZ \cdot \divom = \curveZ \cdot 
\divem = 0$. The only number left to compute is $\curveZ \cdot \diveu$.
Notice that $\curveZ \cap \diveu$ is a subvariety of
\[
 \diveu \cap 
 D_{P_1 P_3 | P_2 Q_2 P_4 Q_4 Q_1 Q_3} \cap 
 D_{P_1 P_3 P_2 Q_2 P_4 Q_4 | Q_1 Q_3}
\]
defined by two equations. The last intersection is a surface whose general 
element has the form:
\begin{center}
\begin{tikzpicture}
  \draw[curveline] (-3,0) -- (-3,3);
  \draw[curveline] (1,-2) -- (1,3);
  \draw[curveline] (-4,1.5) -- (3,1.5);
  \draw[curveline] (0,-1) -- (5,-1);
  \draw (-3,0.5) node[markedpoint, label=west:$\mathbf{P_1}$] {};
  \draw (-3,2.5) node[markedpoint, label=west:$\mathbf{P_3}$] {};
  \draw (2,-1) node[markedpoint, label=north:$\mathbf{Q_1}$] {};
  \draw (3,-1) node[markedpoint, label=north:$\mathbf{Q_3}$] {};
  \draw (-2,1.5) node[markedpoint, label=north:$\mathbf{P_2}$] {};
  \draw (-1,1.5) node[markedpoint, label=north:$\mathbf{Q_2}$] {};
  \draw (1,0.5) node[markedpoint, label=west:$\mathbf{Q_4}$] {};
  \draw (1,2.5) node[markedpoint, label=west:$\mathbf{P_4}$] {};
 \end{tikzpicture}
\end{center}
We can then again use the algorithm from~\cite{GalletGraseggerSchicho} to 
compute $\curveZ \cdot \diveu$, which turns out to be~$1$, or do it via a direct computation, as we did before. 
Now that we know how $\curveZ$ intersects the divisors, we can determine how $\curveC'$ 
intersects them by relying on the following fact:

\begin{lemma}
\label{lemma:complete_intersection}
 The configuration space of any~$K_{2,2}$ in~$K_{3,3}$ is a complete 
intersection; in particular, the class of its intersection with the 
divisors~$D_{I,J}$ does not depend on the type of quadrilateral.
\end{lemma}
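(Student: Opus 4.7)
The plan is to exhibit the configuration curve $\curveD \subset \M_{0,8}$ of $K_{2,2}$ as the intersection of the four divisors coming from its edges, and then to verify that these four divisors meet in the expected codimension. For each edge $\{a,b\}$ of $K_{2,2}$ the cross-ratio equation $\mathrm{cr}(P_a, Q_a, P_b, Q_b) = \lambda_{ab}$ cuts out a divisor $D_{ab} \subset \M_{0,8}$, namely the preimage of the point $(\lambda_{ab}:1) \in \p^1_{\C}$ under the forgetful morphism $\pi_{a,b} \colon \M_{0,8} \to \M_{0,4} \cong \p^1_{\C}$. By construction one has $\curveD = D_{12} \cap D_{14} \cap D_{23} \cap D_{34}$.

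Since $\M_{0,8}$ has dimension $5$ and $\curveD$ is defined by four divisorial equations, the expected codimension is $4$. The classification in Definition~\ref{definition:classification_quadrilaterals} exhibits $\curveD$ explicitly as a union of one, two, or three irreducible curves in each of the five cases, so $\curveD$ is purely one-dimensional regardless of type. Codimension therefore matches the number of equations, which is exactly the condition for $\curveD$ to be a complete intersection inside the smooth variety $\M_{0,8}$.

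From here the second assertion follows formally. The class $[D_{ab}]$ in the Chow group $A^1(\M_{0,8})$ equals $\pi_{a,b}^{\ast}[\mathrm{pt}]$; since all points of $\p^1_{\C}$ are rationally equivalent, $[D_{ab}]$ is independent of the specific value $\lambda_{ab}$, which by hypothesis avoids the three boundary points of $\M_{0,4}$. Hence, because $\curveD$ is a complete intersection,
\[
 [\curveD] \;=\; [D_{12}] \cdot [D_{14}] \cdot [D_{23}] \cdot [D_{34}]
\]
depends only on the combinatorial type of $K_{2,2}$, and the intersection number $[\curveD] \cdot [D_{I,J}]$ coincides across all five quadrilateral types. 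The main delicate point in the plan is guaranteeing the correct dimension of $\curveD$ in the degenerate situations (deltoids and lozenges), where extra components appear: the case-by-case description of Definition~\ref{definition:classification_quadrilaterals} is precisely what prevents a dimensional jump, since each extra component is itself a curve and the complete intersection property is preserved.
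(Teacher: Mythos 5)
Your proof is correct and follows essentially the same route as the paper's: both arguments observe that the configuration space is cut out by four divisorial equations in the five-dimensional $\M_{0,8}$, invoke the case-by-case description in Definition~\ref{definition:classification_quadrilaterals} to see that the result is purely one-dimensional, and conclude that the codimension matches the number of equations, i.e.\ the configuration space is a complete intersection. You spell out the second assertion (rational equivalence of the fiber divisors $D_{ab}$, hence independence of $[\curveD]$ from the edge lengths) somewhat more explicitly than the paper, which simply flags it with ``in particular'', but the underlying reasoning is the same.
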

\begin{proof}
 The configuration space of any subgraph of~$K_{3,3}$ isomorphic to~$K_{2,2}$ 
 given by edge lengths induced by the ones of a flexible assignment of $K_{3,3}$
 is always a curve. At the same time, the configuration space is determined by
 four equations in~$\M_{0,8}$, which is five-dimensional. Hence, it is 
 a complete intersection.
\end{proof}

Denote by~$\curveD'$ the configuration curve of~$K_{2,2}$. 
Using the result about general quadrilaterals, we know that the product of~$\curveD'$ 
with all the four special divisors $\divou, \dotsc, \divem$ in~$\M_{0,8}$ is~$1$. Hence, when $1$ and $3$ 
coincide in the degenerate component of an odd deltoid, then for the non-degenerate component we have:
\begin{align*}
 \curveC' \cdot \divou &= \curveD' \cdot \divou - \curveZ \cdot \divou = 1 - 0 = 1 \,, \\ 
 \curveC' \cdot \divom &= \curveD' \cdot \divom - \curveZ \cdot \divom = 1 - 0 = 1 \,, \\
 \curveC' \cdot \divem &= \curveD' \cdot \divem - \curveZ \cdot \divem = 1 - 0 = 1 \,, \\
 \curveC' \cdot \diveu &= \curveD' \cdot \diveu - \curveZ \cdot \diveu = 1 - 1 = 0 \,.
\end{align*}
If, instead, the points $1$ and $3$ are antipodal, then the degenerate component~$\curveZ$ 
is contained in the subvariety whose general element is
\begin{center}
\begin{tikzpicture}
  \draw[curveline] (-1,0) -- (-1,3);
  \draw[curveline] (1,0) -- (1,3);
  \draw[curveline] (-4,1.5) -- (4,1.5);
  \draw (-1,0.5) node[markedpoint, label=east:$\mathbf{P_1}$] {};
  \draw (-1,2.5) node[markedpoint, label=east:$\mathbf{Q_3}$] {};
  \draw (1,0.5) node[markedpoint, label=west:$\mathbf{Q_1}$] {};
  \draw (1,2.5) node[markedpoint, label=west:$\mathbf{P_3}$] {};
  \draw (-3,1.5) node[markedpoint, label=north:$\mathbf{P_2}$] {};
  \draw (-2,1.5) node[markedpoint, label=north:$\mathbf{Q_2}$] {};
  \draw (3,1.5) node[markedpoint, label=north:$\mathbf{Q_4}$] {};
  \draw (2,1.5) node[markedpoint, label=north:$\mathbf{P_4}$] {};
 \end{tikzpicture}
\end{center}
Arguing as before, we get
\begin{align*}
 \curveC' \cdot \divou &= 1 \,, &
 \curveC' \cdot \divom &= 1 \,, \\
 \curveC' \cdot \divem &= 0 \,, &
 \curveC' \cdot \diveu &= 1 \,.
\end{align*}
The case of the even deltoid is dealt similarly. 

The rhomboid case is different, since there we have two non-degenerate 
components: $\curveD' = \mcal{Y}_1 \cup \mcal{Y}_2$. We know that 
\begin{align*}
 \mcal{Y}_1 \cdot \divou + \mcal{Y}_2 \cdot \divou &= \curveD' \cdot \divou = 1 
\end{align*}
and similarly for the other three divisors $\divom$, $\diveu$, and
$\divem$. In fact, the degrees of the intersections of the whole configuration curve 
with the divisors are the same as in the general case due to \cref{lemma:complete_intersection}.
Consider the situation where the edge lengths are 
\[
 \delta_{S^2}(1,2) = a \,, \quad 
 \delta_{S^2}(2,3) = b \,, \quad 
 \delta_{S^2}(3,4) = a \,, \quad 
 \delta_{S^2}(1,4) = b \,.
\]
The other situation for the edge lengths, where $\delta_{S^2}(1,2) = - \delta_{S^2}(3,4)$ 
and $\delta_{S^2}(1,4) = -\delta_{S^2}(2,3)$, can be analyzed similarly. 
Consider the automorphism~$\sigma$ of~$\M_{0,8}$ that acts as follows on the marked points:
\begin{align*}
   P_1 &\leftrightarrow P_3, & P_2 &\leftrightarrow P_4, &
   Q_1 &\leftrightarrow Q_3, & Q_2 &\leftrightarrow Q_4.
\end{align*}
Look at the equations in~$\M_{0,8}$ that define the configuration curve~$\curveD'$ 
for the situation of edge lengths under consideration. They are of the form:
\begin{align*}
 \crossrat(P_1, Q_1, P_2, Q_2) &= \alpha, & \crossrat(P_2, Q_2, P_3, Q_3) &= \beta, \\
 \crossrat(P_3, Q_3, P_4, Q_4) &= \alpha, & \crossrat(P_1, Q_1, P_4, Q_4) &= \beta,
\end{align*}
where $\alpha$ and $\beta$ depend on~$a$ and~$b$. Then one sees that these equations
are invariant under the action of~$\sigma$. Hence $\sigma$ leaves~$\curveD'$ invariant.
Now consider the component of~$\curveD'$, say~$\mcal{Y}_1$, whose corresponding realizations $(R_1, \dotsc, R_4)$
on the sphere are symmetric with respect to a rotation of the sphere, which swaps~$R_1$
with~$R_3$, and~$R_2$ with~$R_4$. Since, by construction, both a realization and 
its symmetric with respect to the rotation determine the same element on the moduli space~$\M_{0,8}$,
we get that $\sigma$ leaves the component~$\mcal{Y}_1$ pointwise invariant. By inspecting the action of~$\sigma$ on the four cuts~$\cutou$, $\cutom$, $\cuteu$, and~$\cutem$, and taking into account \cref{remark:cut_divisor}, one finds that 
\begin{align*}
  \sigma(\divou) &= \overline{\divou}, & 
  \sigma(\divom) &= \divom, \\
  \sigma(\diveu) &= \overline{\diveu}, & 
  \sigma(\divem) &= \divem,
\end{align*}
where $\overline{(\cdot)}$ denotes complex conjugation. Consider now the automorphism~$\rho$ of~$\M_{0,8}$ that acts as follows on the marked points:
\begin{align*}
   P_1 &\leftrightarrow Q_3, & P_2 &\leftrightarrow Q_4, &
   Q_1 &\leftrightarrow P_3, & Q_2 &\leftrightarrow P_4.
\end{align*}
With similar arguments as before, we see that $\rho$ leaves~$\curveD'$ invariant. Moreover, we know that the component~$\mcal{Y}'$ has a symmetry given by a reflection which swaps~$R_1$ with~$R_3$, and~$R_2$ with~$R_4$. Composing this symmetry with the antipodal map yields a direct isometry of the sphere. Again, a realization and its image under this direct isometry give the same element in the moduli space~$\M_{0,8}$, hence we get that $\rho$ fixes~$\mcal{Y}_2$ pointwise. As we did for~$\sigma$, we find that 
\begin{align*}
  \rho(\divou) &= \divou, & 
  \rho(\divom) &= \overline{\divom}, \\
  \rho(\diveu) &= \diveu, & 
  \rho(\divem) &= \overline{\divem}.
\end{align*}
Now, let us suppose that the curve~$\curveC'$ giving the rhomboid motion is~$\mcal{Y}_1$. 
We claim that $\curveC' \cdot \divou = 0$. In fact, if we had $\curveC' \cdot \divou = 1$, 
then by applying~$\sigma$ we would get $\curveC' \cdot \overline{\divou} = 1$. 
However, due to the relations in the Chow ring of~$\M_{0,8}$ (see \cite[Section~4, Theorem~1]{Keel1992}), 
we know that $\divou \cdot \overline{\divou} = 0$, and so this situation cannot happen. 
Hence we get $\curveC' \cdot \divou = 0$. 
Using the map~$\rho$ on the other component~$\mcal{Y}_2$ one can find that $\curveC' \cdot \divom = 1$, 
and similarly for the intersection with the other two divisors. 
In this way we recover the row of Type~$1$ for the rhomboid in \cref{table:mu}. 
If the curve~$\curveC'$ giving the rhomboid motion is~$\mcal{Y}_2$, 
we recover the row of Type~$4$ for the rhomboid. 
In the situation where the edge lengths are 
\[
 \delta_{S^2}(1,2) = a \,, \quad 
 \delta_{S^2}(2,3) = b \,, \quad 
 \delta_{S^2}(3,4) = -a \,, \quad 
 \delta_{S^2}(1,4) = -b \,,
\]
we perform analogous arguments, this time using the two automorphisms of~$\M_{0,8}$:
\[
\sigma \colon 
\begin{aligned}
   P_1 &\leftrightarrow Q_3, & P_2 &\leftrightarrow P_4, &
   Q_1 &\leftrightarrow P_3, & Q_2 &\leftrightarrow Q_4,
\end{aligned}
\]
and
\[
\rho \colon
\begin{aligned}
   P_1 &\leftrightarrow P_3, & P_2 &\leftrightarrow Q_4, &
   Q_1 &\leftrightarrow Q_3, & Q_2 &\leftrightarrow P_4.
\end{aligned}
\]
In this way we compute all the values in \cref{table:mu} for the rhomboid.

In the case of lozenges, we have that the configuration curve splits into three connected components, two of which are degenerate because either the odd or the even vertices coincide. We can then adopt the same technique we used in the case of the odd deltoid to compute the contributions provided by degenerate components, and so obtaining the $\mu$-numbers for the non-degenerate one. We have four cases, each for a possible configuration of edge lengths as shown in \cref{definition:classification_quadrilaterals}.

We can sum up what we have obtained so far in the \cref{table:mu}.
\begin{table}[ht]
 \caption{Intersections of the projection of a configuration curve on a 
quadrilateral in~$K_{3,3}$ with the boundary divisors of $\M_{0,8}$.
In the deltoid cases, the subcases are distinguished by properties
of the respective residual degenerate components.}
\centering
 \begin{tabular}{cccccc}
  \toprule
   case & subcase & $\mu(\cutom)$ & $\mu(\cutou)$ & 
   $\mu(\cutem)$ & $\mu(\cuteu)$ \\
  \midrule
   \caseG & & 1 & 1 & 1 & 1 \\
  \midrule
   \multirow{2}{*}{\caseO} & $1,3$ \phantomas[l]{ antipodal}{\text{ coincide}} 
    & 1 & 1 
    & 1 & 0 \\
    & $1,3$ \text{ antipodal} & 1 & 1 & 0 & 1 \\
  \midrule
   \multirow{2}{*}{\caseE} & $2,4$ \phantomas[l]{ antipodal}{\text{ coincide}} 
   & 1 & 0 & 1 & 1 \\
   & $2,4$ \text{ antipodal} & 0 & 1 & 1 & 1 \\
  \midrule
   \multirow{4}{*}{\caseR} & Type 1 & 1 & 0 & 1 & 0 \\
   & Type 2 & 0 & 1 & 1 & 0 \\
   & Type 3 & 1 & 0 & 0 & 1 \\
   & Type 4 & 0 & 1 & 0 & 1 \\
  \midrule
   \multirow{4}{*}{\caseL} & Type 1 & 1 & 0 & 1 & 0 \\
   & Type 2 & 0 & 1 & 1 & 0 \\
   & Type 3 & 1 & 0 & 0 & 1 \\
   & Type 4 & 0 & 1 & 0 & 1 \\
  \bottomrule
 \end{tabular}
 \label{table:mu}
\end{table}
\subsection{Classification of forgetful maps}

The goal of this subsection is to determine all the possible cases for the 
degrees of the maps~$\{ p_i \}$, $\{ q_i^k \}$, and~$\{r_i^{k\ell}\}$ once
a proper motion~$\curveC$ of~$K_{3,3}$ is fixed. The first 
result we want to show (\cref{proposition:nodixon_birational}) is 
that, for a proper motion of~$K_{3,3}$ which is not a Dixon~$1$ motion, all the 
maps $p_1, \dotsc, p_6$ are birational.

\Cref{proposition:nodixon_birational} will follow once we prove a 
couple of auxiliary results. We start by stating an elementary fact.

\begin{lemma}
\label{lemma:involution_coplanar}
 Let $T$ and $U$ be points on the sphere $S^2 \subset \R^3$.
 If an involution swaps~$T$ and~$U$, 
 then $T$, $U$ and the axis of the involution are coplanar.
\end{lemma}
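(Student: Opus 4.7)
The plan is to unpack what it means for $o$ to be an involutive isometry of the sphere having an axis: up to the standard classification of isometries of $S^2$, such an $o$ is either a half-turn (rotation by $\pi$) about a line $L$ through the center, or a reflection across a plane through the center, and in the latter case one can take the line perpendicular to the mirror plane as the ``axis'' of $o$. In both cases the axis $L$ passes through the origin, and linear algebra on $\R^3$ makes the statement almost immediate.

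First I would decompose $T = T_\parallel + T_\perp$, where $T_\parallel$ is the orthogonal projection of $T$ onto $L$ and $T_\perp$ is the component perpendicular to $L$. For a half-turn about $L$, the map is the identity on $L$ and $-\mathrm{id}$ on $L^\perp$, so $U = o(T) = T_\parallel - T_\perp$. Hence $T - U = 2T_\perp$ and $(T+U)/2 = T_\parallel \in L$. The plane $\Pi$ spanned by the direction vector of $L$ together with $T_\perp$ (if $T_\perp \neq 0$; otherwise $T \in L$ and the statement is trivial) contains the origin, contains $L$, and contains both $T = T_\parallel + T_\perp$ and $U = T_\parallel - T_\perp$, so $T$, $U$, and $L$ are coplanar. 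In the reflection case, $o$ acts as $-\mathrm{id}$ on $L$ and as the identity on $L^\perp$, so $U = -T_\parallel + T_\perp$ and the same plane spanned by $L$ and $T_\parallel$ works.

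There is essentially no obstacle here; the only thing to be careful about is the degenerate case $T \in L$, in which $T = U$ and any plane through $L$ and $T$ contains everything required. This is why the statement holds without qualification.
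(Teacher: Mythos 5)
The paper does not actually give a proof of this lemma; it is labeled ``an elementary fact'' and left to the reader, so there is no argument in the text to compare yours against. Your approach --- classify involutive isometries of the sphere as half-turns about a line through the origin or reflections across a plane through the origin, decompose $T$ orthogonally along the axis, and exhibit the plane through $L$ and $T_\perp$ --- is exactly the right level of argument and is correct in substance. Two small slips worth fixing. First, in the reflection case you write ``the same plane spanned by $L$ and $T_\parallel$ works''; since $T_\parallel \in L$, that span is only $L$ itself, not a plane. You mean the plane spanned by $L$ and $T_\perp$, which is indeed the same plane as in the rotation case and does contain both $T = T_\parallel + T_\perp$ and $U = -T_\parallel + T_\perp$. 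Second, your remark that the degenerate case $T \in L$ forces $T = U$ is only true for the half-turn; for the reflection, $T \in L$ gives $U = -T$, but $T$, $U$ and $L$ are then collinear, so the conclusion still holds trivially. Neither slip threatens the proof.
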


To keep notation simple, in the next lemmas we focus on the degree of specific maps 
from \cref{definition:maps}, but the results still hold once 
indices are modified according to automorphisms of~$K_{3,3}$.

\begin{lemma}
\label{lemma:same_degree}
 $\deg q_{13}^6 = \deg q_{15}^6 = \deg q_{35}^6$. 
\end{lemma}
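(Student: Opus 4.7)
The plan is to factor each double-forgetful map $q_{ij}^6$ as a composition of two single-vertex forgetful maps, use the multiplicativity of the degree, and then invoke Remark~\ref{remark:4cycle} applied to two suitable $K_{2,2}$-subgraphs of $K_{3,3}$.

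First, I would record the factorizations
\begin{align*}
 q_{13}^6 &= q_3^{61} \circ q_1^6 \;=\; q_1^{63} \circ q_3^6, \\
 q_{15}^6 &= q_5^{61} \circ q_1^6, \\
 q_{35}^6 &= q_5^{63} \circ q_3^6.
\end{align*}
Each factor is a dominant morphism of irreducible curves of degree $1$ or $2$ (by the discussion following Definition~\ref{definition:maps}), so degrees multiply. In particular
\[
 \deg q_{13}^6 = \deg q_3^{61} \cdot \deg q_1^6 = \deg q_1^{63} \cdot \deg q_3^6.
\]

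Second, $\curveC_{61}$ is the projection of the proper motion $\curveC$ to the configuration space of the $K_{2,2}$-subgraph of $K_{3,3}$ induced by the vertex set $\{2,3,4,5\}$, in which $3$ and $5$ are the two odd vertices. Since this is a proper motion of $K_{2,2}$, the classification in Definition~\ref{definition:classification_quadrilaterals} applies, and Remark~\ref{remark:4cycle} yields $\deg q_3^{61} = \deg q_5^{61}$ (identifying $q_i^{k\ell}$ with $r_i^{k\ell}$ on the common domain). Likewise, $\curveC_{63}$ is a proper motion of the $K_{2,2}$ on $\{1,2,4,5\}$ with odd vertices $1$ and $5$, and Remark~\ref{remark:4cycle} gives $\deg q_1^{63} = \deg q_5^{63}$.

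Combining the two steps:
\begin{align*}
 \deg q_{13}^6 &= \deg q_3^{61} \cdot \deg q_1^6 = \deg q_5^{61} \cdot \deg q_1^6 = \deg q_{15}^6, \\
 \deg q_{13}^6 &= \deg q_1^{63} \cdot \deg q_3^6 = \deg q_5^{63} \cdot \deg q_3^6 = \deg q_{35}^6,
\end{align*}
which is the desired conclusion.

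The only point that needs care is the tacit claim that the projections $\curveC_{61}$ and $\curveC_{63}$ are themselves \emph{proper} motions of their respective $K_{2,2}$-subgraphs, so that Definition~\ref{definition:classification_quadrilaterals} and Remark~\ref{remark:4cycle} are applicable. This should follow from $\curveC$ being a proper motion of $K_{3,3}$ together with the standing assumption $\deg p_{ij}\le 2$ that is in force at this point of the paper --- in particular neither projection can be constant nor be forced into the boundary of $\M_{0,8}$. This is the one non-routine check in the argument; once it is granted, the rest of the proof is a formal manipulation of degrees.
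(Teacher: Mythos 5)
Your proposal is correct and takes essentially the same route as the paper: factor $q_{13}^6$, $q_{15}^6$, $q_{35}^6$ through the single-vertex forgetful maps, use multiplicativity of degree, and apply Remark~\ref{remark:4cycle} to the resulting $r$-maps on $\curveC_{16}$ and $\curveC_{36}$. (A small notational remark: what you call $q_3^{61}$, $q_5^{61}$, etc.\ is exactly what Definition~\ref{definition:maps} denotes by $r_3^{16}$, $r_5^{16}$, etc.; and the propriety of the quadrilateral projections $\curveC_{16}$, $\curveC_{36}$ is already built into Definition~\ref{definition:classification_quadrilaterals}, so no extra check is needed.)
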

\begin{proof}
Notice that we have the following factorizations:
\begin{center}
\begin{tikzpicture}
  \node (16) at (0,0) {$\curveC_{16}$};
  \node (156) at (0,1.5) {$\curveC_{156}$};
  \node (136) at (0,-1.5) {$\curveC_{136}$};
  \node (6) at (-1.5,0) {$\curveC_{6}$};
  \draw[->] (6) to node[above,font=\tiny] {$q_1^6$} (16);
  \draw[->] (6) to node[above left,font=\tiny] {$q_{15}^6$} (156);
  \draw[->] (6) to node[below left,font=\tiny] {$q_{13}^6$} (136);
  \draw[->] (16) to node[right,font=\tiny] {$r_{5}^{16}$} (156);
  \draw[->] (16) to node[right,font=\tiny] {$r_{3}^{16}$} (136);
\end{tikzpicture}
\end{center}
Since by \cref{remark:4cycle} we have $\deg r_3^{16} = \deg r_5^{16}$,
it follows that $\deg q_{13}^6 = \deg q_{15}^6$. The other equality is proved 
similarly.
\end{proof}

\begin{lemma}
\label{lemma:r2to1_qsnotbirational}
 If $r_1^{56},r_3^{16}$ and $r_5^{36}$ are \twotoone\ maps, then $q_1^6, q_3^6$ 
and $q_5^6$ are not all birational.
\end{lemma}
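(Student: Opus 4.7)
The plan is to assume, for contradiction, that $q_1^6$, $q_3^6$, $q_5^6$ are all birational, produce a $(\mathbb{Z}/2)^2$-subgroup of $\mathrm{Aut}(\curveC_6)$, and then show via Lemma~\ref{lemma:involution_coplanar} that this ultimately forces a Dixon~$1$ motion, contradicting the standing assumption $\deg p_{ij}\leq 2$ guaranteed by Lemma~\ref{lemma:dixon}.

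First, I would apply Remark~\ref{remark:4cycle} inside each of the three quadrilaterals $\curveC_{56}$, $\curveC_{16}$, $\curveC_{36}$: since in every such $K_{2,2}$ the two odd-indexed vertices share parity, the hypothesis propagates so that all six forgetful maps $r_i^{k6}$ with distinct $i,k\in\{1,3,5\}$ are \twotoone. Each such map yields an involution $\iota_i^{k6}$ of $\curveC_{k6}$ fixing all vertices other than $R_i$ and replacing $R_i$ by its reflection across the great circle through $R_2$ and $R_4$: inside each $K_{2,2}$ the odd vertex $i$ is adjacent only to the two even vertices $2,4$, so the two positions of $R_i$ compatible with the edge lengths are precisely the two intersections of the corresponding circles around $R_2$ and $R_4$.

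Second, the birationality of $q_k^6$ lets me lift each $\iota_i^{k6}$ uniquely to an involution $\tilde\iota_i^{k6}$ of $\curveC_6$. The technical heart is to show that this lift reflects \emph{both} $R_i$ and $R_k$: via the two factorizations $q_{ik}^6 = r_k^{i6}\circ q_i^6 = r_i^{k6}\circ q_k^6$ and Lemma~\ref{lemma:same_degree}, the generic fiber of $q_{ik}^6$ has cardinality $2$; parametrizing it both as the two lifts of the $r_i^{k6}$-fiber via $q_k^6$ and as the two lifts of the $r_k^{i6}$-fiber via $q_i^6$, the birationality of $q_i^6$ and $q_k^6$ forbids the two fiber points from sharing either the $R_i$- or the $R_k$-coordinate, so the non-trivial deck element exchanges them by simultaneously reflecting both. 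Collecting over the three pairs, I obtain three distinct commuting involutions $\rho_{13},\rho_{15},\rho_{35}$ of $\curveC_6$ satisfying $\rho_{13}\rho_{15}\rho_{35}=\mathrm{id}$, hence a subgroup of $\mathrm{Aut}(\curveC_6)$ isomorphic to $(\mathbb{Z}/2)^2$.

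Third, I would derive the contradiction on $\curveC$ itself, after lifting along $p_6$ (whose degree is at most $2$). For a generic $(R_1,\ldots,R_6)\in\curveC$, the involution $\rho_{ij}$ moves it to a realization in which $R_i,R_j$ are reflected across $R_2R_4$, the vertices $R_2,R_4$ and the third odd vertex $R_k$ are unchanged, and $R_6$ lands at a new position $R_6^{(ij)}$. Since $\rho_{ij}$ fixes $R_k$, both $R_6$ and $R_6^{(ij)}$ sit at spherical distance $\delta_{S^2}(R_k,R_6)$ from $R_k$, so Lemma~\ref{lemma:involution_coplanar} places $R_k$, $R_6$, $R_6^{(ij)}$ and the axis of the involution swapping them in a common plane through the origin. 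Running this for each $(ij)\in\{(13),(15),(35)\}$ and combining with the additional constraint that $R_6^{(ij)}$ lies at the prescribed distances from the reflected vertices $R_i^{\mathrm{ref}},R_j^{\mathrm{ref}}$ would force the odd vertices $R_1,R_3,R_5$ to be cocircular with $R_2,R_4$; the reasoning at the end of the proof of Lemma~\ref{lemma:dixon} would then yield $\deg p_{ij}\geq 3$ for suitable $i,j$, a contradiction. The delicate point and main obstacle is exactly this last geometric extraction: a careful case distinction is needed to rule out the degenerate possibility $R_6^{(ij)}=R_6$ (which occurs precisely when $R_6$ lies on the axis of the relevant sphere involution and is excluded by the properness of the motion), and to translate the combined coplanarity conditions into the claimed cocircularity.
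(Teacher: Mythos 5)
Your first two steps are essentially sound and could be made rigorous: the two points of a generic fiber of $q_{ik}^6$ do differ in both the $R_i$- and the $R_k$-position (if they agreed on $R_i$ they would differ only in $R_k$ and hence be identified by $q_k^6$, contradicting its birationality), and the resulting deck involutions act on points of $\curveC_6$ by replacing the indicated odd vertices by their alternates, which gives the claimed $(\Z/2)^2$. But this is not where the content of the lemma lies, and your third step --- the only place a contradiction is actually extracted --- does not work as sketched. Three concrete problems. (a) Lemma~\ref{lemma:involution_coplanar} requires an involution \emph{of the sphere} swapping $R_6$ and $R_6^{(ij)}$; you never construct one. The deck involution $\rho_{ij}$ is an automorphism of the abstract curve $\curveC_6$, and there is no canonical spherical isometry realizing it (indeed the map $R_i\mapsto s(R_i)$, $R_j\mapsto s(R_j)$, $R_k\mapsto R_k$ is \emph{not} induced by an isometry unless $R_k$ lies on the mirror plane). (b) The target you aim for, ``$R_1,R_3,R_5$ cocircular with $R_2,R_4$,'' is self-defeating: if an odd vertex lies on the great circle through $R_2$ and $R_4$ it is fixed by the reflection across that plane, so the two candidate positions coincide and the corresponding $r$-map would be birational, contradicting your own hypothesis rather than producing the Dixon~1 configuration. (c) Even granting cocircularity of $R_1,R_3,R_5$, Lemma~\ref{lemma:dixon} only proves $\deg p_{56}\geq 3\Rightarrow$ cocircularity; the converse implication $\deg p_{ij}\geq 3$ is not established and is not automatic. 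So the proposal is a plan whose decisive step is missing.

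For contrast, the paper's proof is not geometric at all: it observes that the hypothesis forces each of the three quadrilaterals $1234$, $1245$, $2345$ to be of type \caseG\ or \caseE, so that $\mu(\cutem)=1$ for each by Table~\ref{table:mu}; pulling the divisor $\divem$ back along the (assumed birational) maps $q_1^6$, $q_3^6$, $q_5^6$ yields three linear equations in the intersection numbers $\mu^6(I,J)$, each with right-hand side $1$. Since $\curveC$ is real, the summands pair up under complex conjugation, so the total of the left-hand sides is even while the right-hand sides sum to $3$ --- a parity contradiction. If you want to pursue your involution-based route, you would need to replace your step three by an honest argument (for instance, analyzing what the simultaneous existence of the three involutions forces on the types of the quadrilaterals $\curveC_{56}$, $\curveC_{16}$, $\curveC_{36}$), and at present no such argument is supplied.
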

\begin{proof}
 Since $r_1^{56}$ is \twotoone, then the subgraph~$1234$ is either \caseG\ or \caseE, and 
similarly for the subgraphs~$1245$ and~$2345$. Suppose that all $q_1^6$, $q_3^6$, 
and~$q_5^6$ are birational. We show that this configuration 
is not possible, and to do so we use cuts. We consider the cut~$\cutem $ for~$1234$, namely: 
  \[
   \cutem = (P_2 Q_2 \, P_1 Q_3 | P_4 Q_4 \, Q_1 P_3) \,. \\
  \]
  From \cref{table:mu}, we get $\mu^{56}(\cutem) = 1$, where $\mu^{56}(\cdot)$ are the $\mu$-values for the quadrilateral $1234$ as defined in Equation~\eqref{equation:mu}. Similar relations hold for the other two quadrilaterals $2345$ and $1245$, namely 
$\mu^{16}(\cutem)=1$ and $\mu^{36}(\cutem) = 1$, where $\cutem$ must be 
interpreted according to the context.
   
   By taking the pullback of~$\divem |_{\curveC_{56}}$ under~$q_5^6$ we find the relation:
   \begin{equation}
   \label{equation:ramification}
     \deg q_5^6 \cdot \mu^{56}(\cutem) = 
     \sum_{\substack{(I,J) \\ \text{extending } \cutem}} \mu^{6}(I,J)  \,,
   \end{equation}
   where the numbers $\mu^{6}(I,J)$ are the degrees of the restrictions $D_{I,J}|_{\curveC_6}$.
   In the summation above, we say that $(I,J)$ ``extends'' $(I', J')$ if $I' 
\subseteq I$ and $J' \subseteq J$. The sum on the right hand side has two 
summands --- because we must separate~$P_5$ and $Q_5$ due to 
\cref{lemma:cuts_no_two} --- namely:  
   \begin{align*}
    (I_{P_1 Q_3 P_5}, J_{Q_1 P_3 Q_5}) &:= (P_2 Q_2 \, P_1 Q_3 \, P_5 | P_4 Q_4 \, Q_1 P_3 \, Q_5) \,, \\  
    (I_{P_1 Q_3 Q_5}, J_{Q_1 P_3 P_5}) &:= (P_2 Q_2 \, P_1 Q_3 \, Q_5 | P_4 Q_4 \, Q_1 P_3 \, P_5) \,. 
   \end{align*}
   So Equation~\eqref{equation:ramification} reads (here we use the assumption that $q_5^6$ is birational)
   \[
    1 = \mu^6(I_{P_1 Q_3 P_5}, J_{Q_1 P_3 Q_5}) + \mu^6(I_{P_1 Q_3 Q_5}, J_{Q_1 P_3 P_5}) \,.
   \]
   The same operation can be done starting from the cut~$\cutou$ for 
$2345$ and $1245$, using~$q_1^6$ and $q_3^6$, respectively. This leads to two other 
linear equations for the quantities~$\mu^6(\cdot, \cdot)$. The three linear equations that we 
obtain are of the form
   \begin{align*}
    \mu^6(I_{P_1 Q_3 P_5}, J_{Q_1 P_3 Q_5}) + \mu^6(I_{P_1 Q_3 Q_5}, J_{Q_1 P_3 P_5}) &= 1, \\
    \mu^6(I_{P_1 Q_5 P_3}, J_{Q_1 P_5 Q_5}) + \mu^6(I_{P_1 Q_5 Q_3}, J_{Q_1 P_5 P_3}) &= 1, \\
    \mu^6(I_{P_3 Q_5 P_1}, J_{Q_3 P_5 Q_1}) + \mu^6(I_{P_3 Q_5 Q_1}, J_{Q_3 P_5 P_1}) &= 1. 
   \end{align*}
   Taking into account that 
   \[
    \mu^6(I_{P_i Q_j P_k}, J_{Q_i P_j Q_k}) = \mu^6(I_{Q_i P_j Q_k}, J_{P_i Q_j P_k})
   \]
   because the corresponding divisors are complex conjugated, while $\curveC$ is real, one immediately
   finds that this system has no integer solutions. In fact, summing the left hand sides gives an even
   number, while summing the right hand sides gives~$3$. Thus we reached a contradiction, hence the proof is complete.
\end{proof}

\begin{lemma}
\label{lemma:cocircular_nobirational}
 Suppose that $1$, $3$, and $5$ are cocircular. Then 
not all three maps~$q_1^6$, $q_3^6$, and~$q_5^6$ can be birational.
\end{lemma}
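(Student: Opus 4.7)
The plan is to argue by contradiction and reduce to Lemma~\ref{lemma:r2to1_qsnotbirational}. First I would introduce the reflection $\iota$ of the sphere across the great circle $C$ containing the cocircular points $R_1,R_3,R_5$; this $\iota$ is an orientation-reversing isometry fixing these three vertices and preserving all spherical distances. Assuming for contradiction that $q_1^6$, $q_3^6$, and $q_5^6$ are all birational, I would show that $\curveC$ is invariant under the induced action: given any realization $(R_1,\dots,R_6)\in\curveC$, the reflected realization $(R_1,\iota(R_2),R_3,\iota(R_4),R_5,\iota(R_6))$ has identical edge lengths, and birationality of each $q_k^6$ would force the lift from $\curveC_{k6}$ to $\curveC_6$ to be unique, pinning the reflected configuration down inside $\curveC_6$; compatibility of $R_6$ under cocircularity then lets one lift back to $\curveC$ itself.

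Having established $\iota$-invariance, I would next exploit it at the level of the subquadrilaterals $\curveC_{56}$, $\curveC_{16}$, $\curveC_{36}$. In each such motion $\iota$ fixes the two odd vertices of the quadrilateral and moves the two even ones. I would argue that the two candidate positions of the forgotten odd vertex produced by the distance constraints (which are related by reflection across the great circle through the two even vertices) both lift to $\curveC$ once we use $\iota$-invariance together with the cocircular triple. Consequently each of $r_1^{56}$, $r_3^{16}$, $r_5^{36}$ is 2-to-1, placing us in the hypothesis of Lemma~\ref{lemma:r2to1_qsnotbirational}, whose conclusion contradicts the birationality of $q_1^6$, $q_3^6$, $q_5^6$.

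The main obstacle I expect is this second step: the reflection $\iota$ fixes (rather than permutes) the two odd vertices present in each quadrilateral, so it is not a standard deltoid-type symmetry of $H_{56}$, $H_{16}$, or $H_{36}$ in the sense of Definition~\ref{definition:classification_quadrilaterals}, and extracting the $2$-to-$1$ conclusion for each $r_i^{k\ell}$ therefore requires combining the global $\iota$-invariance with the specific interaction between the cocircularity condition on the triple $(R_1,R_3,R_5)$ and the two distance circles determining the forgotten odd vertex. A careful bookkeeping in the moduli space $\M_{0,8}$ using the cut/bond machinery of Section~\ref{colorings} and Table~\ref{table:mu}, or alternatively a direct geometric identification of the two intersection points of the distance circles with the $\iota$-conjugate positions already contained in the motion, should finally upgrade $\iota$-invariance to the desired degree statement.
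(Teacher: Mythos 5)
There is a genuine gap. Your strategy is to force all three maps $r_1^{56}$, $r_3^{16}$, $r_5^{36}$ to be \twotoone\ and then conclude via Lemma~\ref{lemma:r2to1_qsnotbirational}, but this cannot succeed, because under the hypotheses of the lemma there is a case in which these maps are genuinely birational. The degree of $r_i^{k\ell}$ is an intrinsic property of the quadrilateral motion $\curveC_{k\ell}$: it is dictated by its type in the sense of Definition~\ref{definition:classification_quadrilaterals} (general, deltoid, rhomboid, lozenge), hence ultimately by the induced edge lengths and the choice of irreducible component, and no symmetry of the ambient motion $\curveC$ can change it. Concretely, by Lemma~\ref{lemma:same_degree} the degrees $\deg q_{13}^6 = \deg q_{15}^6 = \deg q_{35}^6$ coincide; if this common value is $1$ (a priori possible when $1$, $3$, $5$ are cocircular, e.g.\ when $\curveC_{56}$, $\curveC_{16}$, $\curveC_{36}$ are rhomboids or odd deltoids), then all three $r$-maps are birational and Lemma~\ref{lemma:r2to1_qsnotbirational} is simply not applicable. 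Your reflection $\iota$ cannot repair this: as you yourself note, $\iota$ fixes the odd vertices, whereas the two candidate positions of the forgotten odd vertex in a fiber of, say, $r_1^{56}$ are exchanged by the reflection across the great circle through $R_2$ and $R_4$ --- a different isometry, acting over a different base point of $\curveC_{156}$ --- so $\iota$-invariance of $\curveC$ gives no information about whether the second candidate lies on the component $\curveC_{56}$. (The $\iota$-invariance claim itself is also under-justified, since the reflected realization lies in the full configuration curve but not obviously in the chosen irreducible component; but this is secondary.)

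The paper's proof splits into the two cases $\deg q_{13}^6 = 2$ and $\deg q_{13}^6 = 1$. The first is exactly your intended reduction to Lemma~\ref{lemma:r2to1_qsnotbirational}, and there the \twotoone\ property of the $r$-maps comes for free from the commutative diagrams relating the $q$- and $r$-maps, with no need for $\iota$. The second case, which your proposal omits, requires a different argument: each of $\curveC_{56}$, $\curveC_{16}$, $\curveC_{36}$ must then be \caseO, \caseR, or \caseL; lozenges are excluded by the non-degeneracy assumption, and a short inspection leaves only the configuration of one odd deltoid and two rhomboids; the involutions attached to these types, combined with Lemma~\ref{lemma:involution_coplanar}, force all of $1,2,3,4,5$ to be cocircular, contradicting the standing hypothesis that no two vertices coincide or are antipodal. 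To complete your proof you would need to supply this entire second branch.
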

\begin{proof}
 Suppose, for a contradiction, that all the maps~$q_1^6$, 
$q_3^6$, and~$q_5^6$ are birational. \Cref{lemma:same_degree} tells us 
that the degrees of the maps~$q_{13}^6$, $q_{15}^6$, $q_{35}^6$ are the same. 
We distinguish two cases, depending on $\deg q_{13}^6$. 
\begin{description}[leftmargin=.6cm]
  \item[$\deg q_{13}^6 = 1$:]
   Looking at the commutative diagrams as in the proof of 
\cref{lemma:same_degree}, we see that $\deg r_1^{56} = \deg r_3^{16} = 
\deg r_5^{36} = 1$. 
Then, by the classification of moving~$K_{2,2}$ we get that $\curveC_{56}$, 
$\curveC_{16}$, and $\curveC_{36}$ can be \caseO, \caseR, or \caseL. Let us 
suppose that $\curveC_{56}$ is \caseL. This 
forces $\curveC_{16}$ to be \caseL\ as well. However, this contradicts the 
general assumption that no two vertices coincide or are antipodal (in fact, in this case either 
$2$ and $4$, or $1$ and $3$ must coincide or be antipodal). Hence, none of~$\curveC_{56}$, 
$\curveC_{16}$, and~$\curveC_{36}$ can be \caseL. A direct inspection shows 
that not all three of them can be~\caseR, and if one of them is \caseO, then the 
other two must be \caseR, so the latter is the only possibility left. Recall 
from \cref{definition:classification_quadrilaterals} that in this 
case there exist isometries~$o$, $p_1$, and~$p_2$ of the sphere such that:
\begin{itemize}
 \item $o$ swaps $1$ and $3$, and fixes $2$ and $4$;
 \item $p_1$ swaps $3$ and $5$, and $2$ and $4$;
 \item $p_2$ swaps $1$ and $5$, and $2$ and $4$.
\end{itemize}
Using \cref{lemma:involution_coplanar} we see that all the points $1$, 
$2$, $3$, $4$, and $5$ are cocircular, and this violates the hypothesis of 
non-collapsing and non-antipodal vertices.
Hence we reached a contradiction when $\deg q_{13}^6 = 1$.
\item[$\deg q_{13}^6 = 2$:]
From the diagram
\begin{center}
\begin{tikzpicture}
  \node (56) at (0,0) {$\curveC_{56}$};
  \node (156) at (0,-1.5) {$\curveC_{156}$};
  \node (6) at (-1.5,0) {$\curveC_{6}$};
  \draw[->] (6) to node[above,font=\tiny] {$q_5^6$} (56);
  \draw[->] (6) to node[below left,font=\tiny] {$q_{15}^6$} (156);
  \draw[->] (56) to node[right,font=\tiny] {$r_{1}^{56}$} (156);
\end{tikzpicture}
\end{center}
we have that $r_1^{56}$ is a \twotoone\ map, and similarly for $r_3^{16}$ and 
$r_5^{36}$. Hence by \cref{lemma:r2to1_qsnotbirational} we get that not 
all maps $q_1^6$, $q_3^6$ and $q_5^6$ are birational.
 \end{description}
\end{proof}

With this result at hand, we can prove 
\cref{proposition:nodixon_birational}. 

\begin{proposition}
\label{proposition:nodixon_birational}
 Suppose that $\curveC$ is a proper motion of~$K_{3,3}$ which is not a Dixon~1 
motion. Then all the maps $p_1, \dotsc, p_6$ are birational.
\end{proposition}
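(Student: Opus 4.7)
The plan is to argue by contradiction, in the spirit of Lemma~\ref{lemma:dixon}. Suppose some $p_i$ is not birational; by the symmetry of $K_{3,3}$ we may assume $i=6$. Since $\curveC$ is proper, every forgetful map in Definition~\ref{definition:maps} is either birational or \twotoone\ (the possible positions of the missing vertex being obtained as intersections of circles on the sphere), so $p_6$ must be \twotoone.

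Because $\curveC$ is not a Dixon~$1$ motion, Lemma~\ref{lemma:dixon} gives $\deg p_{6j}\leq 2$ for every $j\neq 6$. Using the factorization $p_{6j}=q_j^6\circ p_6$ together with $\deg p_6=2$, one immediately deduces that $q_j^6$ is birational for each $j\in\{1,3,5\}$.

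Next we extract a geometric constraint from the fact that $p_6$ is \twotoone. A generic fibre of $p_6$ consists of two essentially distinct realizations that agree on vertices $1,2,3,4,5$ but differ at vertex~$6$; let $R_6$ and $R_6'$ denote the two resulting positions on the sphere, which are necessarily distinct. Since the neighbours of vertex~$6$ in $K_{3,3}$ are precisely $\{1,3,5\}$, the three points $R_1,R_3,R_5$ all lie in the locus of points equidistant from $R_6$ and $R_6'$. Arguing exactly as in the proof of Lemma~\ref{lemma:dixon}, this locus is a single great circle, so $R_1,R_3,R_5$ are cocircular; and since this holds on a dense subset of~$\curveC$, the vertices $1,3,5$ remain cocircular throughout the motion.

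Combining the two conclusions produces a direct contradiction with Lemma~\ref{lemma:cocircular_nobirational}, which asserts that if $1,3,5$ are cocircular then $q_1^6, q_3^6, q_5^6$ cannot all be birational. Hence $p_6$ must be birational, and the same argument with vertex~$6$ replaced by any other vertex shows that each $p_i$ is birational. The only step requiring care is the cocircularity deduction: we need $R_1,R_3,R_5$ to be pairwise distinct and no two of them antipodal for the equidistant-locus identification to be valid, which is immediate from $\curveC$ being a proper motion.
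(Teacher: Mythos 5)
Your proof is correct and uses exactly the same two ingredients as the paper's: the degree bound $\deg p_{ij}\leq 2$ coming from Lemma~\ref{lemma:dixon}, and Lemma~\ref{lemma:cocircular_nobirational} combined with the elementary fact that two distinct (and not rotation-equivalent) positions of vertex~$6$ force $R_1,R_3,R_5$ onto a common great circle. The only difference is organizational: the paper argues directly by splitting into the case where some $q_i^6$ is \twotoone\ (then $\deg p_6=1$ immediately from $\deg p_{i6}\leq 2$) and the case where all $q_i^6$ are birational (then $1,3,5$ are non-cocircular, so $p_6$ cannot be \twotoone), whereas you run the contrapositive as a single contradiction. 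One small inaccuracy: you invoke Lemma~\ref{lemma:dixon} to claim $\deg p_{6j}\leq 2$ for \emph{every} $j\neq 6$, but that lemma only addresses $p_{ij}$ with $i$ odd and $j$ even (i.e.\ forgetting two adjacent vertices); fortunately you only use $j\in\{1,3,5\}$, so nothing breaks.
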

\begin{proof}
Without loss of generality, we prove that $p_6$ is birational.
By \cref{lemma:dixon}, the maps $p_{16}$, $p_{36}$ and $p_{56}$ can only be
\twotoone\ or birational.
If any of $q_1^6$, $q_3^6$, $q_5^6$ is \twotoone, then $p_6$ is birational, so 
the proof is concluded.
If all maps~$\{q_i^6\}$ are birational, then the vertices $1$, $3$, and $5$ are 
not cocircular by \cref{lemma:cocircular_nobirational}.
Hence, in this case $p_6$ must be birational,
because it is not possible on the sphere that two distinct points are 
equidistant from three non-cocircular points, which would be the case for the 
two elements in a fiber of~$p_6$ if this map were \twotoone.
\end{proof}

We proceed now by classifying the possible degrees of the maps~$\{ q_{i}^k \}$ 
and~$\{ r_{i}^{k\ell} \}$.

\begin{lemma}
\label{lemma:determined}
 Suppose that $\curveC$ is a proper motion of~$K_{3,3}$. The numbers~$\{ \deg 
r_i^{k\ell} \}$ are completely determined by the numbers~$\{ \deg q_i^j \}$. 
\end{lemma}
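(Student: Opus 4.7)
The plan is to exploit the commutativity of the forgetful-map tower. For any three distinct vertices $i, k, \ell \in \{1, \dotsc, 6\}$, the morphism $\curveC_k \to \curveC_{k\ell i}$ that forgets both vertex~$i$ and vertex~$\ell$ admits two factorizations: through $\curveC_{k\ell}$ as $r_i^{k\ell} \circ q_\ell^k$, and through $\curveC_{ki}$ as $r_\ell^{ki} \circ q_i^k$. Taking degrees yields the multiplicative identity
\begin{equation*}
 \deg r_i^{k\ell} \cdot \deg q_\ell^k \;=\; \deg r_\ell^{ki} \cdot \deg q_i^k \,. \tag{$\star$}
\end{equation*}
Since $\curveC$ is proper and (by the standing assumption derived from Lemma~\ref{lemma:dixon}) $\deg p_{ij} \leq 2$ for all $i, j$, every degree appearing in $(\star)$ lies in $\{1, 2\}$.

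Consequently, $(\star)$ already pins down both $\deg r_i^{k\ell}$ and $\deg r_\ell^{ki}$ whenever $\deg q_i^k \neq \deg q_\ell^k$: the side with the larger $\deg q$ must pair with $\deg r = 1$ on the opposite side of $(\star)$, and the side with the smaller $\deg q$ with $\deg r = 2$. So the first step reads off all such $\deg r$'s directly from the given $\deg q$'s. The subtle case is $\deg q_i^k = \deg q_\ell^k$, in which $(\star)$ yields only $\deg r_i^{k\ell} = \deg r_\ell^{ki}$. To fix the common value, the plan is to combine the $(\star)$-identities for all triples inside a quadruple $\{i, k, \ell, m\}$ (with $m$ a fourth vertex), together with Remark~\ref{remark:4cycle}, which equates $\deg r$'s on opposite vertices of a $4$-cycle of $K_{3,3}$, and with the admissible $\deg r$-patterns on a moving $K_{2,2}$ subgraph furnished by Definition~\ref{definition:classification_quadrilaterals}, namely $(2,2,2,2)$, $(1,2,1,2)$, $(2,1,2,1)$, and $(1,1,1,1)$. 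These combinatorial restrictions reduce the remaining indeterminacy to a single value.

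The main obstacle is precisely this degenerate case: when many $\deg q$'s coincide, $(\star)$ alone is too weak, and the argument must carefully combine the relations from multiple triples with the $4$-cycle symmetry of $K_{3,3}$ and with the short list of admissible quadrilateral degree-patterns, in order to extract a unique value for each $\deg r_i^{k\ell}$. Once this is done, every $\deg r_i^{k\ell}$ is expressed as a function of the $\deg q$-values, proving the lemma.
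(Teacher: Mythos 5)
Your identity $(\star)$ is precisely the commutative-diagram relation the paper uses, and your handling of the non-degenerate case (some $\deg q_i^k \neq \deg q_\ell^k$) matches the paper's Case~1: once a $1$ and a $2$ appear among the $q$-degrees, $(\star)$ forces both $r$-degrees because everything lives in $\{1,2\}$. That part is correct.

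The gap is in the degenerate case, and it is a real gap, not just an omission of detail. When all relevant $q$-maps have the same degree $d$, combining $(\star)$ for all triples with Remark~\ref{remark:4cycle} only yields that all the $r$-maps in question share a common degree $r$; it does not say what $r$ is. Your proposal to close this with ``the admissible $\deg r$-patterns'' from Definition~\ref{definition:classification_quadrilaterals} cannot work, because the list you quote, $(2,2,2,2)$, $(1,2,1,2)$, $(2,1,2,1)$, $(1,1,1,1)$, covers \emph{every} pair of (odd-parity value, even-parity value) in $\{1,2\}^2$. In other words, the ``pattern list'' is exactly the content of Remark~\ref{remark:4cycle} and carries no further information; it does not rule out either $r=1$ or $r=2$. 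So your three ingredients, $(\star)$, the $4$-cycle symmetry, and the admissible patterns, leave the degenerate case genuinely undetermined.

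The paper resolves the degenerate case with two additional inputs that your proposal never invokes. First, Lemma~\ref{lemma:r2to1_qsnotbirational}, a bond-theoretic parity argument in the Chow ring of $\M_{0,10}$, shows that if the $r$-maps $r_1^{56}, r_3^{16}, r_5^{36}$ are all $\twotoone$ then $q_1^6, q_3^6, q_5^6$ cannot all be birational; this rules out $(d,r)=(1,2)$. Second, a direct geometric observation about fibers of $\curveC_6 \to \curveC_{356}$ shows that if all $r$-maps are birational then the $q$-maps are forced to be birational as well; this rules out $(d,r)=(2,1)$. Together these pin down $r=d$. Neither step is a consequence of the purely combinatorial data you propose to use, and you would need to prove something equivalent to close the argument.
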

\begin{proof}
 Consider the $K_{3,2}$ subgraph of~$K_{3,3}$ spanned by the vertices 
$1,2,3,4,5$: there are exactly three maps from the motion~$\curveC_6$ to a 
motion of a subgraph of this~$K_{3,2}$ isomorphic to~$K_{2,2}$, namely:
\[
 q_1^6 \colon \curveC_6 \longrightarrow \curveC_{16}, \quad
 q_3^6 \colon \curveC_6 \longrightarrow \curveC_{36}, \quad
 q_5^6 \colon \curveC_6 \longrightarrow \curveC_{56}.
\]
For each $q$-map, say $q_1^6$, we consider two $r$-maps, namely $r_5^{16}$ and 
$r_3^{16}$. We can arrange all these maps in a hexagonal diagram: 
\begin{center}
\begin{tikzpicture}[scale=2]
  \node[] (c16) at (0,1) {$\curveC_{16}$};
  \node[] (c6) at (0,0) {$\curveC_{6}$};
  \node[] (c136) at (-1,0.5) {$\curveC_{136}$};
  \node[] (c156) at (1,0.5) {$\curveC_{156}$};
  \node[] (c36) at (-1,-0.5) {$\curveC_{36}$};
  \node[] (c56) at (1,-0.5) {$\curveC_{56}$};
  \node[] (c356) at (0,-1) {$\curveC_{356}$};
  
  \draw[->] (c6) to node[right,font=\scriptsize] {$q_1^6$} (c16);
  \draw[->] (c6) to node[below,font=\scriptsize] {$q_3^6$} (c36);
  \draw[->] (c6) to node[below,font=\scriptsize] {$q_5^6$} (c56);
  
  \draw[->] (c16) to node[above,font=\scriptsize] {$r_3^{16}$} (c136);
  \draw[->] (c16) to node[above,font=\scriptsize] {$r_5^{16}$} (c156);
  \draw[->] (c36) to node[left,font=\scriptsize] {$r_1^{36}$} (c136);
  \draw[->] (c36) to node[below,font=\scriptsize] {$r_5^{36}$} (c356);
  \draw[->] (c56) to node[right,font=\scriptsize] {$r_1^{56}$} (c156);
  \draw[->] (c56) to node[below,font=\scriptsize] {$r_3^{56}$} (c356);
\end{tikzpicture}
\end{center}
We can now start with the proof. Let us first suppose that not all $q$-maps 
have the same degree. Then without loss of generality we can suppose that 
\[
 \deg q_3^6 = 1, \quad \deg q_5^6 = 2 \,.
\]
This implies that $\deg r_5^{36} = 2$ and $\deg r_3^{56} = 1$. We know that 
$\deg r_5^{36} = \deg r_1^{36}$, and similarly for the other pairs of $r$-maps 
(see \cref{remark:4cycle}). If we denote by~$x$ the number~$\deg q_1^6$, 
and by~$y$ the number $\deg r_3^{16} = \deg r_5^{16}$, we find the equation $xy 
= 2$ from the commutativity of the diagram. Hence the missing degrees of the 
$r$-maps are determined by the degrees of the $q$-maps. 

Let us now suppose that all the $q$-maps have the same degree. Then we know 
from \cref{remark:4cycle} that all the $r$-maps have the same degree, but 
we do not know whether it equals the degree of the $q$-maps or not.  
We claim that in this case the degrees of the $q$- and $r$-maps are the same.
Assume that the $r$-maps have degree two.
Then, the $q$-maps cannot be all birational by 
\cref{lemma:r2to1_qsnotbirational},
namely they all must have degree two.
On the other hand, assume that all $r$-maps are birational.
Looking at the preimages of the elements of~$\curveC_{356}$,
the positions of vertices $3$ and $5$ are uniquely determined by~$1$, $2$ 
and~$4$.
Therefore, $q^6_3$ and $q^6_5$ are birational. Similarly for the map~$q^6_1$.
\end{proof}

\begin{remark}
\label{remark:same_degree_q}
 As a consequence of the proof of \cref{lemma:determined} we have that,
 for a fixed $k \in \{1, \dotsc, 6\}$:
\[
 \deg q^k_{ij} = \deg q^k_{i \ell} = \deg q^k_{j \ell} = 
 \vartheta
 \bigl(
  \deg q^k_i, \deg q^k_j, \deg q^k_{\ell}
 \bigr) \,,
\]
where
\[
 \vartheta (d_1, d_2, d_3) := 
 \begin{cases}
  1 & \text{if }  d_1 = d_2 = d_3 = 1, \\
  4 & \text{if }  d_1 = d_2 = d_3 = 2, \\
  2 & \text{otherwise},
 \end{cases}
\]
and $i, j, \ell$ have parity different from~$k$.
\end{remark}

\subsection{Degree tables and type tables}
\Cref{lemma:determined} implies that, if we suppose that we are not in the 
Dixon~1 case (so that all maps~$p_i$ are birational by \cref{proposition:nodixon_birational}), then the degrees of the 
maps $\{ p_{ij} \}$ determine the degrees of all the other maps. Hence, we have 
$2^9$ possible situations for the degrees of all maps determined by a 
configuration curve of~$K_{3,3}$. We can fit the numbers $\{ \deg 
p_{ij} \}$ into a table, which we call a \emph{degree table}: 
\begin{center}
\begin{tabular}{cccc}
 & 2 & 4 & 6 \\
 1 & $\ddots$ & $\vdots$ & $\fixediddots$ \\
 3 & $\cdots$ & $\deg p_{ij}$ & $\cdots$ \vphantom{$\fixediddots$}\\
 5 & $\fixediddots$ & $\vdots$ & $\ddots$
\end{tabular}
\end{center}
Notice that the set of $2^9$ possibilities admits several symmetries: we can 
permute the indices $1,3,5$ and $2,4,6$ obtaining an isomorphic configuration, 
and we can also swap the roles of the odd and even indices. Hence we obtain an 
action by the group $(S_3 \times S_3) \rtimes \Z_2$ on the set of possible degree tables.

\begin{lemma}
 The group action has $26$ orbits.
\end{lemma}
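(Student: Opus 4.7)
The plan is to invoke Burnside's (Cauchy--Frobenius) lemma. A degree table can be identified with a function $f \colon \{1,3,5\} \times \{2,4,6\} \to \{1,2\}$, so the set $X$ of degree tables has cardinality $|X|=2^{9}=512$. The group $G := (S_3 \times S_3) \rtimes \Z_2$ has order $72$, and acts on $X$ by permuting odd and even labels independently, with $\Z_2$ swapping the roles of rows and columns. For any $g \in G$, a table is fixed by $g$ if and only if it is constant on the orbits of $\langle g \rangle$ on the $9$ cells, so $|X^g| = 2^{c(g)}$ where $c(g)$ is the number of such orbits. The number of orbits of $G$ on $X$ is then
\[
 \frac{1}{72} \sum_{g \in G} 2^{c(g)}.
\]

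The next step is to enumerate the conjugacy classes of $G$ and, for each, determine $c(g)$ for a representative. The normal subgroup $S_3 \times S_3$ splits into $3 \times 3 = 9$ conjugacy classes indexed by pairs of cycle types in $S_3$, chosen from $1^3$, $1\cdot 2$, and $3$. For a representative $(\sigma,\tau)$ acting as $(i,j) \mapsto (\sigma(i), \tau(j))$, every orbit on cells is determined by one cycle of $\sigma$ and one of $\tau$, and has length $\mathrm{lcm}$ of the two cycle lengths, so $c(\sigma,\tau)$ is computed directly from the pair of cycle types. For elements in the non-trivial coset $(S_3 \times S_3)\cdot t$, where $t$ is the swap involution, the computation
\[
 (\alpha,\beta)\cdot((\sigma,\tau),t)\cdot(\alpha,\beta)^{-1}
 = ((\alpha\sigma\beta^{-1},\beta\tau\alpha^{-1}),t)
\]
shows that the conjugacy class of $((\sigma,\tau),t)$ depends only on the $S_3$-conjugacy class of $\sigma\tau$, yielding $3$ additional classes of $G$, for a total of $12$.

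To compute $c(g)$ for an element $((\sigma,\tau),t)$ in the non-trivial coset, one uses that its square lies in $S_3\times S_3$ and acts as $(\sigma\tau,\tau\sigma)$; the orbits on $\{1,3,5\}\times\{2,4,6\}$ can then be read off from the cycle structure of $\sigma\tau$. Once the cycle count $c(g)$ and the class size are determined for each of the $12$ classes, the final step is a short finite sum yielding $\frac{1}{72}\sum_{g\in G} 2^{c(g)} = 26$.

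The main obstacle is the careful bookkeeping for the non-trivial coset: one has to make sure that the orbit-counting formula for the twisted action $(i,j)\mapsto(\tau(j),\sigma(i))$ is correctly reduced to the cycle type of $\sigma\tau$, and that class sizes are weighted correctly. This is routine but error-prone; a sanity check is that the contributions from $(S_3\times S_3)$ alone should give the number of orbits under just the row/column symmetry, which should be twice as large as $26$ minus the tables fixed by the transposition, matching a separate direct count.
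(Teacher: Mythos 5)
Your proof is correct in substance and reaches the right count of $26$, but it takes a genuinely different route from the paper. The paper's argument is a one-line conceptual bijection: a degree table corresponds to a subgraph of $K_{3,3}$ (put an edge $\{i,j\}$ when the $(i,j)$-entry is $2$), under which the symmetry group of degree tables becomes $\operatorname{Aut}(K_{3,3}) \cong (S_3 \times S_3) \rtimes \Z_2$, so the number of orbits is the number of subgraphs of $K_{3,3}$ up to automorphism; the paper then quotes the value $26$ from a computer algebra computation. Your Burnside computation is essentially what sits underneath that computer count, made explicit: $\frac{1}{72}\sum_g 2^{c(g)}$. I verified the numbers: the contribution from $S_3\times S_3$ is $1296$ (giving $36$ row/column orbits) and the contribution from the twisted coset is $576$, so $\frac{1296+576}{72}=26$. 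What the paper's bijection buys is that the answer is recognizable as a known enumeration (subgraphs of $K_{3,3}$); what your direct approach buys is self-containedness, at the cost of finicky bookkeeping for the twisted coset, which you correctly organize by reducing the conjugacy class of $((\sigma,\tau),t)$ to the cycle type of $\sigma\tau$ via the formula $(\alpha,\beta)((\sigma,\tau),t)(\alpha,\beta)^{-1}=((\alpha\sigma\beta^{-1},\beta\tau\alpha^{-1}),t)$.

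Two small inaccuracies that do not affect the final count but are worth fixing. First, the group $G$ has $9$ conjugacy classes, not $12$: the $3\times 3 = 9$ classes of $S_3\times S_3$ fuse in pairs under the swap (e.g.\ $(1^3,1\cdot 2)$ with $(1\cdot 2,1^3)$), leaving $6$, to which the $3$ outer classes are added. What you are actually summing over is a refinement by $(S_3\times S_3)$-classes on the identity coset plus $G$-classes on the other coset, which is perfectly legitimate for Burnside as long as class sizes are taken consistently (they total $36+36=72$); but calling them ``12 conjugacy classes of $G$'' is wrong. Second, the closing sanity check is garbled: the number of $S_3\times S_3$-orbits is $36$, and the correct reconciliation with $26$ is that $26 = n_1+n_2$ and $36 = n_1+2n_2$, where $n_1 = 16$ is the number of $G$-orbits that are invariant under the swap and $n_2 = 10$ is the number that split; the value $16$ is a count of orbits, not of ``tables fixed by the transposition'' (there are $2^6 = 64$ of those).
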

\begin{proof}
 To each degree table we associate a subgraph of~$K_{3,3}$ as follows: two 
vertices $i$ and $j$ are connected by an edge if and only if there is a~$2$ in the 
corresponding entry of the table. In this way we see that the 
group of symmetries of the degree tables gets translated into the group of 
automorphisms of~$K_{3,3}$. Therefore, the number of orbits is the number of 
subgraphs of~$K_{3,3}$ up to automorphisms, which is~$26$, as one can see, for 
example, from a direct computation using computer algebra tools. 
\end{proof}

By recalling that we are supposing that the maps $\{ p_i \}$ are birational and
using \cref{remark:same_degree_q}, we get the following result. 

\begin{lemma}
\label{lemma:lcm}
 Consider the column~$w_i$ of a degree table corresponding to a label~$i \in \{2,4,6\}$.
 Then $\vartheta(w_i)$ is the degree of the 
maps~$p_{i13}$, $p_{i15}$, and~$p_{i35}$ (see \cref{definition:maps}). 
A similar statement holds for the rows of a degree table. 
\end{lemma}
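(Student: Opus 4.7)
The plan is to reduce everything to Remark~\ref{remark:same_degree_q} by factoring each composite forgetful map through the corresponding single-vertex forgetful map. Since the standing assumption (enforced just before the lemma) is that we are not in the Dixon~1 situation, Proposition~\ref{proposition:nodixon_birational} gives that every map $p_k\colon\curveC\to\curveC_k$ is birational, and this is what makes the reduction work.

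First I would unwind the definitions: for the column $w_i$ with $i$ even, the entries are exactly the numbers $\deg p_{ij}$ for $j\in\{1,3,5\}$. Using the factorization $p_{ij}=q_j^i\circ p_i$ from Definition~\ref{definition:maps}, birationality of $p_i$ yields $\deg p_{ij}=\deg q_j^i$, so the column $w_i$ coincides with the triple $(\deg q_1^i,\deg q_3^i,\deg q_5^i)$. Similarly, $p_{ijk}=q_{jk}^i\circ p_i$, so $\deg p_{ijk}=\deg q_{jk}^i$ for the three choices $\{j,k\}\subset\{1,3,5\}$.

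Next I would simply invoke Remark~\ref{remark:same_degree_q} with $k=i$: it tells us that
\[
 \deg q_{13}^i = \deg q_{15}^i = \deg q_{35}^i = \vartheta\bigl(\deg q_1^i,\deg q_3^i,\deg q_5^i\bigr).
\]
Combining the two displayed identities gives
\[
 \deg p_{i13}=\deg p_{i15}=\deg p_{i35}=\vartheta(w_i),
\]
which is precisely the claim. The statement for rows is obtained by the symmetry swapping odd and even indices (same argument, with $i\in\{1,3,5\}$ and the relevant $q$-maps coming from the other parity class).

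There is no real obstacle here: the lemma is essentially a bookkeeping consequence of (i) birationality of the $p_i$'s in the non-Dixon~1 regime and (ii) Remark~\ref{remark:same_degree_q}. The only thing to be careful about is making explicit that the two factorizations $p_{ij}=q_j^i\circ p_i$ and $p_{ijk}=q_{jk}^i\circ p_i$ share the birational factor $p_i$, so that degrees on the left and right sides match exactly; once this is said, the function $\vartheta$ defined in Remark~\ref{remark:same_degree_q} does all the remaining work.
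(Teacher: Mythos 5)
Your argument is correct and is exactly the one the paper intends: the paper states the lemma as an immediate consequence of ``recalling that we are supposing that the maps $\{p_i\}$ are birational and using Remark~\ref{remark:same_degree_q},'' and your write-up just makes the two factorizations $p_{ij}=q_j^i\circ p_i$ and $p_{ijk}=q_{jk}^i\circ p_i$ explicit to justify that reduction.
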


Taking into account \cref{lemma:lcm}, we can enhance the degree tables by 
the degrees of the maps~$\{ p_{ijk} \}$. For example, we have a table of the 
form:
\begin{center}
 \begin{tabular}{cccc}
  2 & 2 & \multicolumn{1}{c|}{2} & 4 \\
  2 & 2 & \multicolumn{1}{c|}{2} & 4 \\
  2 & 2 & \multicolumn{1}{c|}{1} & 2 \\ \cline{1-3}
  4 & 4 & 2 
 \end{tabular}
\end{center}

The crucial fact is that, once a degree table is given, then it prescribes the 
types of all quadrilaterals in~$K_{3,3}$, with only one ambiguity, as the 
following proposition clarifies.

\begin{proposition}
 Given an enhanced degree table, the types of the $9$ subgraphs of~$K_{3,3}$ 
isomorphic to~$K_{2,2}$ are determined as follows. For every entry of 
the table, associate a type according to the following rules: 
\begin{center}
 \begin{tabular}{ccc}
 \begin{tabular}{ccc}
 \begin{tabular}{ccc}
  $2$ & \multicolumn{1}{c|}{$\cdots$} & $4$ \\
  $\vdots$ \\ \cline{1-1}
  $4$
 \end{tabular} 
 & $\leadsto$ & \caseG
 \end{tabular}
 & $\qquad$ &
 \begin{tabular}{ccc}
 \begin{tabular}{ccc}
  $1$ & \multicolumn{1}{c|}{$\cdots$} & $2$ \\
  $\vdots$ \\ \cline{1-1}
  $2$
 \end{tabular} 
 & $\leadsto$ & \caseG
 \end{tabular}
 \\[10ex]
 \begin{tabular}{ccc}
 \begin{tabular}{ccc}
  $2$ & \multicolumn{1}{c|}{$\cdots$} & $2$ \\
  $\vdots$ \\ \cline{1-1}
  $4$
 \end{tabular} 
 & $\leadsto$ & \caseE
 \end{tabular}
 &&
 \begin{tabular}{ccc}
 \begin{tabular}{ccc}
  $2$ & \multicolumn{1}{c|}{$\cdots$} & $4$ \\
  $\vdots$ \\ \cline{1-1}
  $2$
 \end{tabular} 
 & $\leadsto$ & \caseO
 \end{tabular}
 \end{tabular}
\end{center}
\begin{center}
 \begin{tabular}{ccc}
 \begin{tabular}{ccc}
 \begin{tabular}{ccc}
  $1$ & \multicolumn{1}{c|}{$\cdots$} & $1$ \\
  $\vdots$ \\ \cline{1-1}
  $2$
 \end{tabular} 
 & $\leadsto$ & \caseE
 \end{tabular}
 &&
 \begin{tabular}{ccc}
 \begin{tabular}{ccc}
  $1$ & \multicolumn{1}{c|}{$\cdots$} & $2$ \\
  $\vdots$ \\ \cline{1-1}
  $1$
 \end{tabular} 
 & $\leadsto$ & \caseO
 \end{tabular}
 \\[10ex]
 \begin{tabular}{ccc}
 \begin{tabular}{ccc}
  $2$ & \multicolumn{1}{c|}{$\cdots$} & $2$ \\
  $\vdots$ \\ \cline{1-1}
  $2$
 \end{tabular} 
 & $\leadsto$ & \caseR/\caseL
 \end{tabular}
 &&
 \begin{tabular}{ccc}
 \begin{tabular}{ccc}
  $1$ & \multicolumn{1}{c|}{$\cdots$} & $1$ \\
  $\vdots$ \\ \cline{1-1}
  $1$
 \end{tabular} 
 & $\leadsto$ & \caseR/\caseL
 \end{tabular}
 \end{tabular}
\end{center}
 The table that we obtain listing the types of the $9$ quadrilaterals is called 
a \emph{type table}.
\end{proposition}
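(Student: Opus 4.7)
The plan is to show that for each entry at position $(i,j)$ in the enhanced degree table (with $i \in \{1,3,5\}$, $j \in \{2,4,6\}$), the degree pair $(d_o, d_e)$ that classifies the type of the quadrilateral $Q := \{1,\ldots,6\} \setminus \{i,j\}$ can be computed as $d_o = \vartheta(\text{column } j)/\deg p_{ij}$ and $d_e = \vartheta(\text{row } i)/\deg p_{ij}$; the eight pattern rules then reduce to an enumeration.

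First, by Remark~\ref{remark:4cycle} the $r$-maps out of $\curveC_{ij}$ forgetting an odd (respectively, even) vertex of $Q$ share a common degree $d_o$ (respectively, $d_e$); and by Definition~\ref{definition:classification_quadrilaterals} the type of $Q$ is determined by $(d_o, d_e)$: the value $(2,2)$ forces \caseG, $(1,2)$ forces \caseO, $(2,1)$ forces \caseE, and $(1,1)$ forces \caseR\ or \caseL\ (with an inherent ambiguity since both these types have all $r$-maps birational).

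Next I would derive the quotient formulas. Outside the Dixon~1 case, Proposition~\ref{proposition:nodixon_birational} yields that every $p_v$ is birational. Combining for any odd vertex $o$ of $Q$ the two factorizations $p_{ijo} = r_o^{ij} \circ p_{ij}$ and $p_{ijo} = q_{io}^j \circ p_j$ gives $d_o \cdot \deg p_{ij} = \deg q_{io}^j$, and Remark~\ref{remark:same_degree_q} identifies the right-hand side with $\vartheta\bigl(\deg p_{1j}, \deg p_{3j}, \deg p_{5j}\bigr)$, i.e.\ the column $\vartheta$-value $\vartheta(w_j)$ from Lemma~\ref{lemma:lcm}. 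The symmetric argument on row $i$, using the birationality of $p_i$ together with the factorizations $p_{ije} = r_e^{ij} \circ p_{ij} = q_{je}^i \circ p_i$ for any even vertex $e$ of $Q$, gives $d_e \cdot \deg p_{ij}$ equal to the row $\vartheta$-value.

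Finally, the eight patterns follow by direct substitution: when $\deg p_{ij} = 2$ the row and column $\vartheta$-values each lie in $\{2,4\}$, giving the four combinations assigned in the statement; the analogous four combinations appear when $\deg p_{ij} = 1$ and the $\vartheta$-values lie in $\{1,2\}$. The only real obstacle is parity bookkeeping: one must apply Remark~\ref{remark:same_degree_q} with the correct triple of same-parity vertices for both the column and the row arguments, and must invoke the birationality of both $p_i$ and $p_j$ so that the multi-step degree factorizations collapse to the simple quotient formulas; once this is done, the type classification follows immediately from the values in Definition~\ref{definition:classification_quadrilaterals}.
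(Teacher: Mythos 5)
Your argument is correct and is essentially an explicit write-up of the ``direct inspection'' the paper invokes: you recognize that the type of a quadrilateral is determined by the pair $(d_o, d_e)$ of common $r$-map degrees from Definition~\ref{definition:classification_quadrilaterals} and Remark~\ref{remark:4cycle}, derive the quotient formulas $d_o = \vartheta(\text{col } j)/\deg p_{ij}$, $d_e = \vartheta(\text{row } i)/\deg p_{ij}$ via the composition $p_{ijk} = r_k^{ij}\circ p_{ij}$ combined with Lemma~\ref{lemma:lcm} (the route through $q$-maps and Remark~\ref{remark:same_degree_q} is one way to realize Lemma~\ref{lemma:lcm}, so it is consistent with the paper), and then substitute to obtain the eight patterns. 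This matches what the paper leaves implicit, so the approach is the same.
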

\begin{proof}
 The proof follows from a direct inspection of the classification of 
quadrilaterals in \cref{definition:classification_quadrilaterals}.
\end{proof}

However, only few configurations of quadrilaterals are allowed.

\begin{proposition}
\label{proposition:allowed}
 Up to permutations, the only allowed rows of a type table are the following:
 \begin{center}
  \caseG\caseG$\ast$, \caseR\caseR\caseE, \caseO\caseO\caseO, 
\caseO\caseO\caseL, \caseO\caseO\caseG,
 \end{center}
 where $\ast$ denotes any symbol in $\{$\caseG, \caseO, \caseE, \caseR, 
\caseL$\}$.
 Up to permutations, the only allowed columns of a type table are the following:
 \begin{center}
  \caseG\caseG$\ast$, \caseR\caseR\caseO, \caseE\caseE\caseE, 
\caseE\caseE\caseL, \caseE\caseE\caseG,
 \end{center}
 where $\ast$ denotes any symbol in $\{$\caseG, \caseO, \caseE, \caseR, 
\caseL$\}$.
\end{proposition}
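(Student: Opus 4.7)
The plan is a case analysis on the rows of a degree table, relying on the explicit classification rules from the preceding proposition. I would parameterize a row of the degree table by a triple $(d_1, d_2, d_3) \in \{1, 2\}^3$ and compute the row invariant $\theta^R = \vartheta(d_1, d_2, d_3)$ via Remark~\ref{remark:same_degree_q}: this gives $\theta^R = 1$ exactly when the row is $(1,1,1)$, $\theta^R = 4$ when it is $(2,2,2)$, and $\theta^R = 2$ otherwise. The column invariants $\theta_j^C$ behave analogously, and compatibility of the entries of each column forces $\theta_j^C \in \{1,2\}$ when $d_j = 1$ and $\theta_j^C \in \{2,4\}$ when $d_j = 2$. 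For each entry, the triple $(d_j, \theta^R, \theta_j^C)$ then determines its type via the rules of the preceding proposition, with the \caseR/\caseL\ ambiguity only for the triples $(1,1,1)$ and $(2,2,2)$.

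A preliminary enumeration shows that if $\theta^R = 1$ the row entries lie in $\{\caseR, \caseL, \caseE\}$; if $\theta^R = 4$ they lie in $\{\caseO, \caseG\}$; and if $\theta^R = 2$ the row is mixed, with entries indexed by $d_j = 1$ falling in $\{\caseO, \caseG\}$ and those indexed by $d_j = 2$ falling in $\{\caseR, \caseL, \caseE\}$. This produces a longer list of candidate row patterns than the five appearing in the statement, and the bulk of the argument is to eliminate the spurious ones. To do so, I would examine each of the $26$ orbits of degree tables under the $(S_3 \times S_3) \rtimes \Z_2$ action, compute the three rows of each type table (taking the \caseR/\caseL\ ambiguity into account), and confirm that the only row patterns that occur are \caseG\caseG$\ast$, \caseR\caseR\caseE, \caseO\caseO\caseO, \caseO\caseO\caseL, and \caseO\caseO\caseG. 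The analogous claim for columns follows by interchanging the roles of odd and even indices.

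The main obstacle is the volume of the case analysis. A local row pattern such as \caseO\caseO\caseR\ is compatible with the local rules but, when completed to a full $3 \times 3$ degree table, forces an inadmissible configuration in one of the remaining rows or columns: in that example, two columns must consist entirely of $1$s, which then restricts every other row so that its type pattern contains two \caseE\ or two \caseR/\caseL\ entries in positions incompatible with the listed rows. Running systematically through the $26$ orbits is routine but unavoidable; a computer-algebra check is the most efficient route and confirms that no further row patterns arise, establishing the proposition.
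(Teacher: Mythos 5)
Your proposal does not actually prove the proposition; it confuses the role of degree tables with the geometric content being established. The preceding proposition only says that a degree table \emph{determines} the type table up to the \caseR/\caseL\ ambiguity. It says nothing about which degree tables (equivalently, which type tables) are \emph{geometrically realizable} by a proper motion of~$K_{3,3}$, and that realizability question is precisely what Proposition~\ref{proposition:allowed} addresses. Enumerating the $26$ orbits of degree tables and reading off types by the conversion rules cannot close this gap, for two reasons. First, the degree table cannot distinguish \caseR\ from \caseL, so your enumeration inherently produces rows such as \caseO\caseO\caseR\ (forbidden) alongside \caseO\caseO\caseL\ (allowed), and columns \caseL\caseL\caseE\ alongside \caseR\caseR\caseE; resolving this ambiguity requires geometry, not combinatorics. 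Second, your test for pruning a candidate---that completing it to a full $3\times 3$ degree table ``forces an inadmissible configuration in one of the remaining rows or columns''---is circular: the only sense in which a row or column could be ``inadmissible'' is that it is not in the list you are trying to prove. In fact your specific illustration is wrong: the row $(1,1,2)$ with the first two columns all $1$s and third column $(2,2,1)$ is a perfectly good degree table, whose rows are \caseO\caseO-(\caseR/\caseL), \caseO\caseO-(\caseR/\caseL), and (\caseR/\caseL)(\caseR/\caseL)\caseE; the combinatorics gives no obstruction, and nothing in your argument forces the third entry of the first row to be \caseL\ rather than \caseR.

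The paper's proof supplies exactly the geometric input you are missing. It takes two quadrilaterals lying in the same column of the type table, e.g.\ $1234$ and $2345$ sharing the vertices $2,3,4$, and compares the edge-length relations $\delta_{S^2}(\cdot,\cdot)$ that Definition~\ref{definition:classification_quadrilaterals} attaches to each type. These relations either determine the type of the third quadrilateral $1245$ in the column, or show that the assumed pair is impossible under the hypothesis that no two vertices coincide or are antipodal (for instance, both \caseO\ forces two even vertices to be antipodal). This per-column analysis directly yields the allowed column patterns, with the row statement following by symmetry. None of this can be recovered from the degree-table combinatorics, and no amount of computer enumeration over the $26$ orbits can substitute for it unless the computation actually sets up and checks the spherical constraints, which your proposal does not describe.
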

\begin{proof}
 We only prove the statement about the columns of a type table; the statement 
about the rows follows analogously. Suppose that both $1234$ (\tikz{\draw[Apricot,line width=1] (0,0) -- (0.2,0);}) and $2345$ (\tikz{\draw[Aquamarine,line width=1] (0,0) -- (0.2,0);}) are 
\caseE. Then the situation is as follows, where the edges are labeled by the value of the function~$\delta_{S^2}$:
    \begin{center}
	  \begin{tikzpicture}[quad1/.style={Apricot},quad2/.style={Aquamarine},quad3/.style={Fuchsia!40!white},scale=0.9]
	    \begin{scope}
	      \node[vertex,label=left:1] (1) at (-2,0) {};
	      \node[vertex,label=above:2] (2) at (0,2) {};
	      \node[vertex,label=left:3] (3) at (0.2,0) {};
	      \node[vertex,label=below:4] (4) at (0,-2) {};
	      \node[vertex,label=right:5] (5) at (2.5,0) {};
	      \draw[edge] (1)to node[below=2pt,quad1] {$a$} (2)
										(1)to node[above=5pt,quad1] {$\alpha a$} (4)
										(3)to node[left,quad1] {$b$} (2)
										(3)to node[left,quad1] {$\alpha b$} (4)
										(3)to node[right,quad2] {$b$} (2)
										(3)to node[right,quad2] {$\beta b$} (4)
										(5)to node[below=2pt,quad2] {$c$} (2)										
										(5)to node[above=5pt,quad2] {$\beta c$} (4);
				\draw[quad1,line width=0.75] ($(1)+(0.1,0)$) -- ($(2)-(0.0707107,0.16)$) -- ($(3)-(0.1,0)$) -- ($(4)+(-0.0707107,0.16)$) -- ($(1)+(0.1,0)$) -- cycle;
				\draw[quad2,line width=0.75] ($(5)-(0.1,0)$) -- ($(2)-(-0.0707107,0.16)$) -- ($(3)+(0.1,0)$) -- ($(4)+(0.0707107,0.16)$) -- ($(5)-(0.1,0)$) -- cycle;
				\draw[quad3,line width=0.75] ($(1)-(0.16,0)$) -- ($(2)+(0,0.16)$) -- ($(5)+(0.16,0)$) -- ($(4)-(0,0.16)$) -- ($(1)-(0.16,0)$) -- cycle;
	    \end{scope}
	  \end{tikzpicture}%
	\end{center}
with $\alpha, \beta \in \{1,-1\}$. 
We now try to determine the type of $1245$ (\tikz{\draw[Fuchsia!40!white,line width=1] (0,0) -- (0.2,0);}). 
We distinguish two cases. 
Suppose that $\alpha = \beta$. 
If $c = a$, then $1245$ is \caseL; 
if $c \neq a$, then $1245$ is \caseE. 
Suppose now that $\alpha \neq \beta$. 
This forces $b = 0$, and so we must have $a \neq 0$ and $c \neq 0$, therefore $1245$ is \caseG. 
These three cases are depicted here:
\begin{center}
	  \begin{tikzpicture}[quad1/.style={Apricot},quad2/.style={Aquamarine},quad3/.style={Fuchsia!40!white},scale=0.55]
	    \begin{scope}
	      \node[vertex,label=left:1] (1) at (-2,0) {};
	      \node[vertex,label=above:2] (2) at (0,2) {};
	      \node[vertex,label=left:3] (3) at (0.2,0) {};
	      \node[vertex,label=below:4] (4) at (0,-2) {};
	      \node[vertex,label=right:5] (5) at (2.5,0) {};
	      \draw[edge] (1)to node[above left,quad3] {$a$} (2)
										(1)to node[below left,quad3] {$\pm a$} (4)
										(3)to node[left,quad1] {$b$} (2)
										(3)to node[pos=0.3,left,quad1] {$\pm b$} (4)
										(3)to node[right,quad2] {$b$} (2)
										(3)to node[pos=0.3,right,quad2] {$\pm b$} (4)
										(5)to node[above right,quad3] {$a$} (2)										
										(5)to node[below right,quad3] {$\pm a$} (4);
				\draw[quad1,line width=0.75] ($(1)+(0.1,0)$) -- ($(2)-(0.0707107,0.16)$) -- ($(3)-(0.1,0)$) -- ($(4)+(-0.0707107,0.16)$) -- ($(1)+(0.1,0)$) -- cycle;
				\draw[quad2,line width=0.75] ($(5)-(0.1,0)$) -- ($(2)-(-0.0707107,0.16)$) -- ($(3)+(0.1,0)$) -- ($(4)+(0.0707107,0.16)$) -- ($(5)-(0.1,0)$) -- cycle;
				\draw[quad3,line width=0.75] ($(1)-(0.16,0)$) -- ($(2)+(0,0.16)$) -- ($(5)+(0.16,0)$) -- ($(4)-(0,0.16)$) -- ($(1)-(0.16,0)$) -- cycle;
	    \end{scope}
	    \begin{scope}[xshift=7cm]
	      \node[vertex,label=left:1] (1) at (-2,0) {};
	      \node[vertex,label=above:2] (2) at (0,2) {};
	      \node[vertex,label=left:3] (3) at (0.2,0) {};
	      \node[vertex,label=below:4] (4) at (0,-2) {};
	      \node[vertex,label=right:5] (5) at (2.5,0) {};
	      \draw[edge] (1)to node[above left,quad3] {$a$} (2)
										(1)to node[below left,quad3] {$\pm a$} (4)
										(3)to node[left,quad1] {$b$} (2)
										(3)to node[pos=0.3,left,quad1] {$\pm b$} (4)
										(3)to node[right,quad2] {$b$} (2)
										(3)to node[pos=0.3,right,quad2] {$\pm b$} (4)
										(5)to node[above right,quad3] {$c$} (2)										
										(5)to node[below right,quad3] {$\pm c$} (4);
				\draw[quad1,line width=0.75] ($(1)+(0.1,0)$) -- ($(2)-(0.0707107,0.16)$) -- ($(3)-(0.1,0)$) -- ($(4)+(-0.0707107,0.16)$) -- ($(1)+(0.1,0)$) -- cycle;
				\draw[quad2,line width=0.75] ($(5)-(0.1,0)$) -- ($(2)-(-0.0707107,0.16)$) -- ($(3)+(0.1,0)$) -- ($(4)+(0.0707107,0.16)$) -- ($(5)-(0.1,0)$) -- cycle;
				\draw[quad3,line width=0.75] ($(1)-(0.16,0)$) -- ($(2)+(0,0.16)$) -- ($(5)+(0.16,0)$) -- ($(4)-(0,0.16)$) -- ($(1)-(0.16,0)$) -- cycle;
	    \end{scope}
	    \begin{scope}[yshift=-5cm,xshift=3.5cm]
	      \node[vertex,label=left:1] (1) at (-2,0) {};
	      \node[vertex,label=above:2] (2) at (0,2) {};
	      \node[vertex,label=left:3] (3) at (0.2,0) {};
	      \node[vertex,label=below:4] (4) at (0,-2) {};
	      \node[vertex,label=right:5] (5) at (2.5,0) {};
	      \draw[edge] (1)to node[above left,quad3] {$a$} (2)
										(1)to node[below left,quad3] {$\pm a$} (4)
										(3)to node[left,quad1] {$0$} (2)
										(3)to node[left,quad1] {$0$} (4)
										(3)to node[right,quad2] {$0$} (2)
										(3)to node[right,quad2] {$0$} (4)
										(5)to node[above right,quad3] {$c$} (2)										
										(5)to node[below right,quad3] {$\mp c$} (4);
				\draw[quad1,line width=0.75] ($(1)+(0.1,0)$) -- ($(2)-(0.0707107,0.16)$) -- ($(3)-(0.1,0)$) -- ($(4)+(-0.0707107,0.16)$) -- ($(1)+(0.1,0)$) -- cycle;
				\draw[quad2,line width=0.75] ($(5)-(0.1,0)$) -- ($(2)-(-0.0707107,0.16)$) -- ($(3)+(0.1,0)$) -- ($(4)+(0.0707107,0.16)$) -- ($(5)-(0.1,0)$) -- cycle;
				\draw[quad3,line width=0.75] ($(1)-(0.16,0)$) -- ($(2)+(0,0.16)$) -- ($(5)+(0.16,0)$) -- ($(4)-(0,0.16)$) -- ($(1)-(0.16,0)$) -- cycle;
	    \end{scope}
	  \end{tikzpicture}
\end{center}

We now show that if $1234$ is \caseE\ and $2345$ is \caseL, then $1245$ is \caseE. 
The situation is:
\begin{center}
	  \begin{tikzpicture}[quad1/.style={Apricot},quad2/.style={Aquamarine},quad3/.style={Fuchsia!40!white},scale=0.9]
	    \begin{scope}
	      \node[vertex,label=left:1] (1) at (-2,0) {};
	      \node[vertex,label=above:2] (2) at (0,2) {};
	      \node[vertex,label=left:3] (3) at (0.2,0) {};
	      \node[vertex,label=below:4] (4) at (0,-2) {};
	      \node[vertex,label=right:5] (5) at (2.5,0) {};
	      \draw[edge] (1)to node[below=2pt,quad1] {$a$} (2)
										(1)to node[above=5pt,quad1] {$\alpha a$} (4)
										(3)to node[left,quad1] {$b$} (2)
										(3)to node[left,quad1] {$\alpha b$} (4)
										(3)to node[right,quad2] {$b$} (2)
										(3)to node[right,quad2] {$\beta b$} (4)
										(5)to node[below=2pt,quad2] {$\gamma b$} (2)										
										(5)to node[above=5pt,quad2] {$\delta b$} (4);
				\draw[quad1,line width=0.75] ($(1)+(0.1,0)$) -- ($(2)-(0.0707107,0.16)$) -- ($(3)-(0.1,0)$) -- ($(4)+(-0.0707107,0.16)$) -- ($(1)+(0.1,0)$) -- cycle;
				\draw[quad2,line width=0.75] ($(5)-(0.1,0)$) -- ($(2)-(-0.0707107,0.16)$) -- ($(3)+(0.1,0)$) -- ($(4)+(0.0707107,0.16)$) -- ($(5)-(0.1,0)$) -- cycle;
				\draw[quad3,line width=0.75] ($(1)-(0.16,0)$) -- ($(2)+(0,0.16)$) -- ($(5)+(0.16,0)$) -- ($(4)-(0,0.16)$) -- ($(1)-(0.16,0)$) -- cycle;
	    \end{scope}
	  \end{tikzpicture}
	\end{center}
with $\alpha, \beta, \gamma, \delta \in \{1,-1\}$.
Since $b$ cannot be $0$, we have $\alpha = \beta$. If $\gamma = \delta$, then the lozenge condition forces $\alpha = 1$, so $1245$ is \caseE. If $\gamma = -\delta$, then the lozenge condition forces $\alpha = -1$, so again $1245$ is \caseE. 

We now exclude that 1234 is \caseE\ and $2345$ is \caseO. In this case:
\begin{center}
	  \begin{tikzpicture}[quad1/.style={Apricot},quad2/.style={Aquamarine},quad3/.style={Fuchsia!40!white},scale=0.9]
	    \begin{scope}
	      \node[vertex,label=left:1] (1) at (-2,0) {};
	      \node[vertex,label=above:2] (2) at (0,2) {};
	      \node[vertex,label=left:3] (3) at (0.2,0) {};
	      \node[vertex,label=below:4] (4) at (0,-2) {};
	      \node[vertex,label=right:5] (5) at (2.5,0) {};
	      \draw[edge] (1)to node[below=2pt,quad1] {$a$} (2)
										(1)to node[above=5pt,quad1] {$\alpha a$} (4)
										(3)to node[left,quad1] {$b$} (2)
										(3)to node[left,quad1] {$\alpha b$} (4)
										(3)to node[right,quad2] {$b$} (2)
										(3)to node[right,quad2] {$\alpha b$} (4)
										(5)to node[below=2pt,quad2] {$\beta b$} (2)										
										(5)to node[above=5pt,quad2] {$\beta \alpha b$} (4);
				\draw[quad1,line width=0.75] ($(1)+(0.1,0)$) -- ($(2)-(0.0707107,0.16)$) -- ($(3)-(0.1,0)$) -- ($(4)+(-0.0707107,0.16)$) -- ($(1)+(0.1,0)$) -- cycle;
				\draw[quad2,line width=0.75] ($(5)-(0.1,0)$) -- ($(2)-(-0.0707107,0.16)$) -- ($(3)+(0.1,0)$) -- ($(4)+(0.0707107,0.16)$) -- ($(5)-(0.1,0)$) -- cycle;
				\draw[quad3,line width=0.75] ($(1)-(0.16,0)$) -- ($(2)+(0,0.16)$) -- ($(5)+(0.16,0)$) -- ($(4)-(0,0.16)$) -- ($(1)-(0.16,0)$) -- cycle;
	    \end{scope}
	  \end{tikzpicture}
	\end{center}
Here we must have $b \neq 0$. This is not possible since $2345$ would be actually a lozenge for any $\alpha,\beta\in\{1,-1\}$. 
In a similar way we exclude the case where $1234$ is \caseE\ and $2345$ is \caseR, and $1234$ is \caseL\ and $2345$ is \caseR.

We now exclude that $1234$ is \caseO\ and $2345$ is \caseO. In this case:
\begin{center}
	  \begin{tikzpicture}[quad1/.style={Apricot},quad2/.style={Aquamarine},quad3/.style={Fuchsia!40!white},scale=0.9]
	    \begin{scope}
	      \node[vertex,label=left:1] (1) at (-2,0) {};
	      \node[vertex,label=above:2] (2) at (0,2) {};
	      \node[vertex,label=left:3] (3) at (0.2,0) {};
	      \node[vertex,label=below:4] (4) at (0,-2) {};
	      \node[vertex,label=right:5] (5) at (2.5,0) {};
	      \draw[edge] (1)to node[below=2pt,quad1] {$a$} (2)
										(1)to node[above=5pt,quad1] {$b$} (4)
										(3)to node[left,quad1] {$\alpha a$} (2)
										(3)to node[left,quad1] {$\alpha b$} (4)
										(3)to node[right,quad2] {$\alpha a$} (2)
										(3)to node[right,quad2] {$\alpha b$} (4)
										(5)to node[below=2pt,quad2] {$\beta \alpha a$} (2)										
										(5)to node[above=5pt,quad2] {$\beta \alpha b$} (4);
				\draw[quad1,line width=0.75] ($(1)+(0.1,0)$) -- ($(2)-(0.0707107,0.16)$) -- ($(3)-(0.1,0)$) -- ($(4)+(-0.0707107,0.16)$) -- ($(1)+(0.1,0)$) -- cycle;
				\draw[quad2,line width=0.75] ($(5)-(0.1,0)$) -- ($(2)-(-0.0707107,0.16)$) -- ($(3)+(0.1,0)$) -- ($(4)+(0.0707107,0.16)$) -- ($(5)-(0.1,0)$) -- cycle;
				\draw[quad3,line width=0.75] ($(1)-(0.16,0)$) -- ($(2)+(0,0.16)$) -- ($(5)+(0.16,0)$) -- ($(4)-(0,0.16)$) -- ($(1)-(0.16,0)$) -- cycle;
	    \end{scope}
	  \end{tikzpicture}
	\end{center}
By swapping $3$ and $5$ with their antipodes, we can suppose $\alpha = \beta = 1$.
However, in this case we get three points that are at the same distance from two other ones,
and this forces vertices $2$ and $4$ to be antipodal, which is not allowed.
Similarly, we can exclude the cases where $1234$ is \caseO\ and $2345$ is \caseL, $1234$ and $2345$ are both \caseL.

We now show that if both $1234$ and $2345$ are \caseR, then $1245$ must be \caseO. We have:
\begin{center}
	  \begin{tikzpicture}[quad1/.style={Apricot},quad2/.style={Aquamarine},quad3/.style={Fuchsia!40!white},scale=0.9]
	    \begin{scope}
	      \node[vertex,label=left:1] (1) at (-2,0) {};
	      \node[vertex,label=above:2] (2) at (0,2) {};
	      \node[vertex,label=left:3] (3) at (0.2,0) {};
	      \node[vertex,label=below:4] (4) at (0,-2) {};
	      \node[vertex,label=right:5] (5) at (2.5,0) {};
	      \draw[edge] (1)to node[below=2pt,quad1] {$a$} (2)
										(1)to node[above=5pt,quad1] {$b$} (4)
										(3)to node[left,quad1] {$\alpha b$} (2)
										(3)to node[left,quad1] {$\alpha a$} (4)
										(3)to node[right,quad2] {$\alpha b$} (2)
										(3)to node[right,quad2] {$\alpha a$} (4)
										(5)to node[below=2pt,quad2] {$\beta \alpha a$} (2)										
										(5)to node[above=5pt,quad2] {$\beta \alpha b$} (4);
				\draw[quad1,line width=0.75] ($(1)+(0.1,0)$) -- ($(2)-(0.0707107,0.16)$) -- ($(3)-(0.1,0)$) -- ($(4)+(-0.0707107,0.16)$) -- ($(1)+(0.1,0)$) -- cycle;
				\draw[quad2,line width=0.75] ($(5)-(0.1,0)$) -- ($(2)-(-0.0707107,0.16)$) -- ($(3)+(0.1,0)$) -- ($(4)+(0.0707107,0.16)$) -- ($(5)-(0.1,0)$) -- cycle;
				\draw[quad3,line width=0.75] ($(1)-(0.16,0)$) -- ($(2)+(0,0.16)$) -- ($(5)+(0.16,0)$) -- ($(4)-(0,0.16)$) -- ($(1)-(0.16,0)$) -- cycle;
	    \end{scope}
	  \end{tikzpicture}
	\end{center}
By swapping $3$ and $5$ with their antipodes we can suppose that $\alpha = \beta = 1$, so $1245$ is~\caseO. With the same technique we can prove that if $1234$ is \caseO\ and $2345$ is \caseR, then $1245$ must be \caseR.

To conclude the proof, one notices that if $1245$ is of any of the five types \caseG, \caseO, \caseE, \caseR, or \caseL, then by picking any two general edge lengths for the edges~$23$ and~$34$, one can construct an instance where both $1234$ and $2345$ are \caseG.
\end{proof}

By inspecting the $26$ different cases for the degree table in the light of \cref{proposition:allowed}, one finds that the 
only allowed type tables are the following four: 
\begin{center}
 \begin{tabular}{m{0.8cm}ccc}
  \newCase\label{case:general} &
  \begin{tabular}{cccc}
   2 & 2 & \multicolumn{1}{c|}{2} & 4 \\
   2 & 2 & \multicolumn{1}{c|}{2} & 4 \\
   2 & 2 & \multicolumn{1}{c|}{2} & 4 \\ \cline{1-3}
   4 & 4 & 4 
  \end{tabular}
  & $\leadsto$ &
  \begin{tabular}{ccc}
   \caseG & \caseG & \caseG \\
   \caseG & \caseG & \caseG \\
   \caseG & \caseG & \caseG 
  \end{tabular}
  \\[6ex]
  \newCase\label{case:dixon1} &
  \begin{tabular}{cccc}
   1 & 1 & \multicolumn{1}{c|}{1} & 1 \\
   1 & 1 & \multicolumn{1}{c|}{1} & 1 \\
   1 & 1 & \multicolumn{1}{c|}{2} & 2 \\ \cline{1-3}
   1 & 1 & 2 
  \end{tabular}
  & $\leadsto$ &
  \begin{tabular}{ccc}
   \caseR & \caseR & \caseE \\
   \caseR & \caseR & \caseE \\
   \caseO & \caseO & \caseL 
  \end{tabular}
  \\[6ex]
  \newCase\label{case:dixon2} &
  \begin{tabular}{cccc}
   2 & 1 & \multicolumn{1}{c|}{1} & 2 \\
   1 & 2 & \multicolumn{1}{c|}{1} & 2 \\
   1 & 1 & \multicolumn{1}{c|}{2} & 2 \\ \cline{1-3}
   2 & 2 & 2 
  \end{tabular}
  & $\leadsto$ &
  \begin{tabular}{ccc}
   \caseR & \caseG & \caseG \\
   \caseG & \caseR & \caseG \\
   \caseG & \caseG & \caseR 
  \end{tabular}
  \\[6ex]
  \newCase\label{case:dixon3} &
  \begin{tabular}{cccc}
   1 & 1 & \multicolumn{1}{c|}{2} & 2 \\
   1 & 1 & \multicolumn{1}{c|}{2} & 2 \\
   2 & 2 & \multicolumn{1}{c|}{2} & 4 \\ \cline{1-3}
   2 & 2 & 4 
  \end{tabular}
  & $\leadsto$ &
  \begin{tabular}{ccc}
   \caseG & \caseG & \caseE \\
   \caseG & \caseG & \caseE \\
   \caseO & \caseO & \caseG 
  \end{tabular}
 \end{tabular}
\end{center}

We analyze the four possible cases one by one.

We introduce some notation to deal with nontrivial cuts of~$K_{3,3}$, namely 
cuts that induce surjective colorings. In order 
to make the notation lighter, we will always suppose that the cuts are in 
``normal form'', as described by the following lemma.
\begin{lemma}
\label{lemma:cut_normal_form}
 Let $(I, J)$ be a nontrivial cut of~$K_{3,3}$, namely the 
coloring~$\varepsilon_{I,J}$ is surjective. Then, up to swapping~$I$ 
and~$J$, and the~$P's$ and~$Q's$, we can always suppose that $I$ is of the 
following form: 
 \[
  \{ P_i, Q_i, 
  \begin{array}{c}
  P_{j_1} \\[-3pt]
  \text{\tiny or} \\[-2pt]
  Q_{j_1}
  \end{array},
  \begin{array}{c}
  P_{j_2} \\[-3pt]
  \text{\tiny or} \\[-2pt]
  Q_{j_2}
  \end{array},
  \begin{array}{c}
  P_{j_3} \\[-3pt]
  \text{\tiny or} \\[-2pt]
  Q_{j_3}
  \end{array}
  \} \,,
 \]
 where $i \in \{1, \dotsc, 6\}$ and $j_1 < j_2 < j_3$ are the three numbers in 
$\{1, \dotsc, 6\}$ of parity different from the one of~$i$. 
\end{lemma}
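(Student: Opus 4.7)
The plan is to classify such cuts by the \emph{type} $t(v) := |I \cap \{P_v, Q_v\}| \in \{0, 1, 2\}$ of each vertex $v \in \{1,\dotsc,6\}$. In these terms, the desired normal form amounts to showing that, after swapping $I$ with $J$ if necessary, exactly one vertex has type $2$, the two vertices of the same parity as it have type $0$, and the three vertices of the opposite parity have type $1$.

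Well-definedness of $\varepsilon_{I,J}$, together with Lemma~\ref{lemma:cuts_no_two}, forbids the pair $(t(a), t(b))$ from lying in $\{(0,2),(2,0),(1,1)\}$ for any edge $\{a,b\}$, and surjectivity of $\varepsilon_{I,J}$ further demands at least one type-$2$ vertex (to produce a red edge) and at least one type-$0$ vertex (to produce a blue edge). Since every odd--even pair is an edge of $K_{3,3}$, these per-edge constraints transfer to conditions between the two parity classes: type $0$ on one class forbids type $2$ on the other, and type $1$ cannot appear simultaneously on both classes.

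The statement now follows from a short case analysis. Pick a type-$2$ vertex $i$ (which exists by surjectivity); without loss of generality take $i$ to be odd. By the first parity-class condition no even vertex has type $0$. If some even vertex also had type $2$, the symmetric application of the same condition would exclude type $0$ from the odd class, leaving the whole vertex set with types in $\{1,2\}$ only; but then every edge is red, contradicting surjectivity. Hence every even vertex has type $1$. The second parity-class condition then forces each remaining odd vertex to have type $0$ or $2$. If all three odd vertices were of type $2$ there would again be no type-$0$ vertex and no blue edge; if exactly two were of type $2$, swapping $I \leftrightarrow J$ (which exchanges types $0$ and $2$ while preserving type $1$) yields a cut with exactly one odd type-$2$ vertex, still with all even vertices of type $1$. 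In every surviving case we reach the stated normal form with $i$ odd; if no odd vertex was of type $2$ to start with, the symmetric argument produces the normal form with $i$ even.

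The main point that needs care is the elimination of configurations with type $2$ present on both parity classes, since these are the ones that interact nontrivially with both the no-two condition and surjectivity; all such configurations are ruled out by the observation that they force every vertex to have type in $\{1,2\}$, killing blue edges. The remaining $I \leftrightarrow J$ reduction, from two type-$2$ vertices on the minority parity down to one, is then routine bookkeeping.
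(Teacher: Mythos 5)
Your proof is correct, and it takes a genuinely different route from the paper's. The paper's proof first asserts that $\varepsilon_{I,J}$ is a NAP-coloring and then appeals to the classification of NAP-colorings of $K_{3,3}$ (the ``one can check'' observation accompanying Figure~\ref{figure:NAP_33}), reducing at once to the configuration where the red edges are exactly those incident to a single vertex; the shape of $I$ is then read off from the coloring constraints. You bypass the NAP-coloring classification entirely: by introducing the type $t(v) = |I \cap \{P_v, Q_v\}|$ and using only the per-edge constraint $t(a)+t(b) \neq 2$ coming from Lemma~\ref{lemma:cuts_no_two}, the fact that every odd--even pair is an edge of $K_{3,3}$, and surjectivity of $\varepsilon_{I,J}$, you derive the normal form from first principles. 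This makes your argument more self-contained --- it effectively reproves the $K_{3,3}$ NAP-coloring classification as a byproduct rather than assuming it --- and it treats explicitly the vertices of the same parity as~$i$, which the paper's proof dispatches silently. Both arguments rely on $\varepsilon_{I,J}$ being well-defined, i.e.\ on the cut satisfying the no-two condition of Lemma~\ref{lemma:cuts_no_two}, which the hypothesis ``cut induced by a bond'' supplies via Definition~\ref{definition:bond_coloring}; you flag this dependence, which is left implicit in the statement.
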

\begin{proof}
 By assumption, the coloring~$\varepsilon_{I,J}$ is NAP. By symmetry, we can 
suppose 
 that the three edges incident with vertex $1$ are red and all other edges are 
blue. This implies that $P_1, Q_1 \in I$, since otherwise we would have, in 
particular, $P_2, Q_2 \in I$, and this is not compatible with the fact that 
$\{2,3\}$ is blue. The same argument shows that for $j \in \{2,4,6\}$, 
either $P_j \in I$ or $Q_j \in I$. This concludes the proof.
\end{proof}

\begin{notation}
 According to \cref{lemma:cut_normal_form}, we denote by $(i, T_1T_2T_3)$,
 where $T_k \in \{ P, Q \}$, the cut for which $I = \{P_i, Q_i, (T_1)_{j_1}, 
(T_2)_{j_2}, (T_3)_{j_3} \}$. For example, the cut $(3,PQP)$ is the one in 
which $I = \{ P_3, Q_3, P_2, Q_4, P_6 \}$, while $(2,PPQ)$ is the one in which 
$I = \{ P_2, Q_2, P_1, P_3, Q_5\}$. 
\end{notation}

\begin{definition}
\label{definition:mu_K33}
 As we did for motions of~$K_{2,2}$, given a proper motion~$\curveC$ of~$K_{3,3}$ and a cut~$(i, T_1T_2T_3)$, 
 we define the number $\mu(i, T_1 T_2 T_3)$ as the degree of the restriction of~$\curveC$ 
 to the divisor corresponding to~$(i, T_1T_2T_3)$. 
 Notice that, if $(i, V_1 V_2 V_3)$ is the cut obtained from $(i, T_1 T_2 T_3)$ by swapping $P$'s and $Q$'s, 
 then $D_{i, T_1 T_2 T_3}$ is complex conjugated to $D_{i, V_1 V_2 V_3}$; 
 since $\curveC$ is real, we have that $\mu(i, T_1 T_2 T_3) = \mu(i, V_1 V_2 V_3)$.
\end{definition}

\begin{proposition}
\label{proposition:general}
 \Cref{case:general} cannot happen.
\end{proposition}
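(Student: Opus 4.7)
My plan is to derive, from the pullback of a few well-chosen NAP cuts, a sum of non-negative integer $\mu$-values that is forced to equal $3/2$, producing a contradiction.

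In Case~\eqref{case:general} every quadrilateral in $K_{3,3}$ is of type~$\caseG$, so by Table~\ref{table:mu} we have $\mu^{k\ell}(c)=1$ for each of the four NAP cuts $c$ of each of the nine quadrilaterals, while the degree table gives $\deg p_{k\ell}=2$. The pullback formula for $p_{k\ell}\colon\curveC\to\curveC_{k\ell}$, combined with the observation (already used in Lemma~\ref{lemma:cuts_no_two}) that cuts of $K_{3,3}$ violating the no-size-$2$ condition contribute $0$ to the intersection with $\curveC$, yields
\[ \sum_{c'\text{ NAP extension of }c}\mu(c')\;=\;\deg p_{k\ell}\cdot\mu^{k\ell}(c)\;=\;2 \]
for every such pair $(k\ell,c)$; here $c'$ ranges over the normal-form cuts $(i,T_1T_2T_3)$ of Lemma~\ref{lemma:cut_normal_form} whose restriction to the quadrilateral equals~$c$. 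A direct enumeration shows that for the pair of odd vertices $\{1,3\}$, the $\cutou$ equations coming from the three quadrilaterals $\{1,2,3,4\}$, $\{1,2,3,6\}$, $\{1,3,4,6\}$ (the three quadrilaterals containing both~$1$ and~$3$) all take the form
\[ u_{PPP}+u_T\;=\;2\qquad\text{for }T\in\{PPQ,PQP,PQQ\}, \]
where $u_T:=\mu(1,T)+\mu(3,T)$, once one uses the conjugation identification of Definition~\ref{definition:mu_K33}. In particular $u_T=2-u_{PPP}$ for every $T\neq PPP$.

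Next I would pull back $\cutom$ for the quadrilateral $\{1,2,3,4\}$; its four NAP extensions turn out to be $(1,PQP)$, $(1,PQQ)$, $(3,QPQ)$, $(3,QPP)$, which after conjugation give the equation $u_{PQP}+u_{PQQ}=2$. Combined with $u_{PQP}=u_{PQQ}=2-u_{PPP}$, this forces $u_{PPP}=\mu(1,PPP)+\mu(3,PPP)=1$. The argument is symmetric under permutations of the three odd vertices, so applying it to the pairs $(1,5)$ and $(3,5)$ yields analogously $v_{PPP}:=\mu(1,PPP)+\mu(5,PPP)=1$ and $w_{PPP}:=\mu(3,PPP)+\mu(5,PPP)=1$. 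Adding the three identities,
\[ 2\bigl(\mu(1,PPP)+\mu(3,PPP)+\mu(5,PPP)\bigr)\;=\;u_{PPP}+v_{PPP}+w_{PPP}\;=\;3, \]
so the bracketed sum equals $3/2$. Since each $\mu(i,T)$ is the degree of a zero-dimensional intersection of a projective curve with an effective divisor in $\M_{0,12}$, it is a non-negative integer, so their sum cannot be $3/2$; this is the required contradiction.

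The main obstacle is the combinatorial bookkeeping: for each pullback one must use Lemma~\ref{lemma:cuts_no_two} (applied to the five edges of $K_{3,3}$ meeting one of the two removed vertices) to verify that exactly four NAP extensions of $c$ exist, place each one in the normal form of Lemma~\ref{lemma:cut_normal_form} after swapping $I\leftrightarrow J$ when $|I|=7$, and match it with its conjugate representative. Once this enumeration is in place, the three $\cutou$ equations plus the single $\cutom$ equation assemble into the parity-like contradiction above.
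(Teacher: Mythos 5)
Your proof is correct and reaches the contradiction via a valid sequence of pullback computations, but it takes a somewhat different path from the paper's. The paper pulls back \emph{only} the $\cutom$ divisor over all nine quadrilaterals of $K_{3,3}$, each contributing an equation whose right-hand side equals $\mu^{k\ell}(\cutom)\cdot\deg p_{k\ell}=1\cdot 2=2$; summing all nine equations gives
\[
\sum_{i\in\{1,3,5\}} 4\mu(i,PPQ)+4\mu(i,PQP)+4\mu(i,QPP)=18,
\]
and the contradiction is immediate since the left-hand side is a multiple of $4$. You instead combine, for each pair of odd vertices, the three $\cutou$ pullback equations with a single $\cutom$ pullback equation to extract the stronger intermediate fact $\mu(a,PPP)+\mu(b,PPP)=1$, and then sum over the three odd pairs to obtain $2\bigl(\mu(1,PPP)+\mu(3,PPP)+\mu(5,PPP)\bigr)=3$. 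Both arguments hinge on the same mechanism (linear relations among the $\mu$-values forced by pullback plus the conjugation identification of Definition~\ref{definition:mu_K33}, producing a divisibility obstruction), but your route uses twelve pullback equations across two different cut types where the paper needs only nine of a single type, and it produces as a byproduct the explicit value $\mu(a,PPP)+\mu(b,PPP)=1$ — slightly more information at the cost of a bit more enumeration. The enumeration of NAP extensions that you sketch (four per cut, via the constraint of Lemma~\ref{lemma:cuts_no_two} and the normal form of Lemma~\ref{lemma:cut_normal_form}) is the same bookkeeping the paper performs implicitly, and I have checked that it matches in the cases you list, so the argument stands.
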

\begin{proof}
 We compute the pullbacks under the maps~$p_{ij}$ of the divisors~$\divom$ on each of the 
subgraphs isomorphic to~$K_{2,2}$. For example, the pullback 
of~$\divom$ under~$p_{56}$ consists of the four divisors $D_{1, PQP}$, $D_{1,QPP}$, $D_{3,PQP}$ and~$D_{3,QPP}$. 
By restricting this pullback to the proper motion~$\curveC$ and taking the degree, we get the equation 
 \[
  \mu(1, PQP) + \mu(1, QPP) + \mu(3, PQP) + \mu(3, QPP) = 
  \mu^{56}(\cutom) \cdot \deg p_{56} = 2 \,.
 \]
 Summing all these equations gives
 \[
  \sum_{i \in \{1, 3, 5\}} 4 \mu(i,PPQ) + 4 \mu(i,PQP) + 4 \mu(i,QPP) = 18 \,,
 \]
 which does not have integer solutions, so this case cannot happen.
\end{proof}

\begin{lemma}
\label{lemma:diagonals}
A rhomboid cannot have orthogonal diagonals.
\end{lemma}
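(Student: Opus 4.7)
The plan is to apply the spherical Pythagorean theorem at the intersection of the two diagonals and to combine the resulting identities with the defining rhomboid edge-length relations. A short algebraic manipulation then forces all four values of $\delta_{S^2}$ on the sides to coincide up to sign, which places the edge-length assignment into the lozenge family of Definition~\ref{definition:classification_quadrilaterals}; the remark following that definition then gives a contradiction with the rhomboid hypothesis.

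Concretely, I would fix a real realization $(C_1, C_2, C_3, C_4) \in (S^2)^4$ of a rhomboid with odd vertices $C_1, C_3$ and even vertices $C_2, C_4$, denote by $\ell_{13}$ and $\ell_{24}$ the great circles through the two pairs of opposite vertices, and suppose for contradiction that they meet at a right angle. Picking one of the two intersection points $O \in \ell_{13} \cap \ell_{24}$ and writing $\sigma_i := \delta_{S^2}(O, C_i)$, the orthogonality assumption makes each triangle $O C_i C_j$ for $(i,j)$ indexing a side of the quadrilateral right-angled at $O$. The spherical Pythagorean theorem then yields
\[
\delta_{S^2}(C_i, C_j) \;=\; \sigma_i\sigma_j \qquad \text{for } (i,j) \in \{(1,2),(2,3),(3,4),(1,4)\}.
\]

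Inserting these identities into the rhomboid relations $\delta_{S^2}(C_1,C_2) = \alpha\,\delta_{S^2}(C_3,C_4)$ and $\delta_{S^2}(C_1,C_4) = \alpha\,\delta_{S^2}(C_2,C_3)$ with $\alpha \in \{1,-1\}$ and squaring both produces $\sigma_1^2\sigma_2^2 = \sigma_3^2\sigma_4^2$ and $\sigma_1^2\sigma_4^2 = \sigma_2^2\sigma_3^2$. Adding and subtracting gives the factorizations
\[
(\sigma_1^2 - \sigma_3^2)(\sigma_2^2 + \sigma_4^2) = 0 \qquad \text{and} \qquad (\sigma_1^2 + \sigma_3^2)(\sigma_2^2 - \sigma_4^2) = 0.
\]
If $\sigma_2^2 + \sigma_4^2 = 0$, then both $C_2$ and $C_4$ lie at spherical distance $\pi/2$ from $O$ on the single great circle $\ell_{24}$, but there are only two such points on $\ell_{24}$ and they are antipodal, violating the standing non-coincidence/non-antipodal hypothesis; the symmetric case $\sigma_1 = \sigma_3 = 0$ is ruled out analogously. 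Thus $\sigma_1^2 = \sigma_3^2$ and $\sigma_2^2 = \sigma_4^2$ with all $\sigma_i$ nonzero, so the four signed edge values $\sigma_i\sigma_j$ share the common absolute value $|\sigma_1\sigma_2|$, and their signs reproduce exactly one of the four patterns $\{(1,1,1),(-1,-1,1),(-1,1,-1),(1,-1,-1)\}$ from the lozenge part of Definition~\ref{definition:classification_quadrilaterals}. By the remark after that definition, the configuration is then a lozenge, not a rhomboid, which is the desired contradiction.

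The step I expect to require the most care is translating the algebraic vanishings in the factorizations back into geometric statements about pairs of vertices on a single diagonal. The key input is the elementary fact that any great circle contains exactly two points at spherical distance $\pi/2$ from a prescribed point of itself, and that these two points are antipodal; this is what collapses each degenerate algebraic branch onto the forbidden antipodal/coincident regime. Making this reduction explicit is the only delicate piece of an otherwise purely computational argument.
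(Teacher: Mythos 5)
Your proof is correct and is essentially the same argument as the paper's, just phrased without coordinates: the paper chooses coordinates with vertices $1,3$ in the $xz$-plane and $2,4$ in the $xy$-plane, so that $\delta_{S^2}(C_i,C_j)$ on each side equals $x_i x_j$, and $x_i$ is exactly your $\sigma_i = \delta_{S^2}(O,C_i)$ — i.e.\ the coordinate computation is precisely the spherical Pythagorean theorem at the intersection of the diagonals. The paper then multiplies the two rhomboid relations to get $a^2=b^2$ directly, whereas you add and subtract the squared relations and then dispose of the degenerate branches $\sigma_2=\sigma_4=0$ and $\sigma_1=\sigma_3=0$ by the non-coincident/non-antipodal hypothesis; this is a marginally longer but equally valid route, and it makes explicit a small case analysis that the paper's ``hence the contradiction'' leaves to the reader.
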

\begin{proof}
 For a contradiction, let us suppose that $1234$ is a rhomboid with 
orthogonal diagonals. Then we can take coordinates for the vertices 
of the rhomboid as follows:
\[
 \underbrace{(x_1, 0, z_1)}_{\text{vertex 1}} \quad
 \underbrace{(x_2, y_2, 0)}_{\text{vertex 2}} \quad
 \underbrace{(x_3, 0, z_3)}_{\text{vertex 3}} \quad
 \underbrace{(x_4, y_4, 0)}_{\text{vertex 4}}
\]
By assumption, we have
\begin{align*}
 x_2 \, x_3 &= a, & x_1 \, x_2 &= b, \\
 x_1 \, x_4 &= \alpha \, a, & x_3 \, x_4 &= \alpha \, b,
\end{align*}
with $\alpha \in \{1, -1\}$. From this it follows that $a^2 = b^2$, hence the 
contradiction.
\end{proof}

\begin{proposition}
\label{proposition:dixon1}
 \Cref{case:dixon1} cannot happen.
\end{proposition}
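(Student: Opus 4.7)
The plan is to show that the type table of Case~\ref{case:dixon1} forces every realization in $\curveC\cap\mscr{M}_{0,12}$ to lie in a Dixon~$1$ configuration, from which a contradiction with the hypothesis $\deg p_{56}=2$ will follow.

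First, fixing a general realization $(R_1,\dots,R_6)$, I would extract involutions from the four deltoid quadrilaterals. By Definition~\ref{definition:classification_quadrilaterals}, each of the odd deltoids on $\{1,3,4,6\}$ and $\{1,2,3,6\}$ yields an involution of $S^2$ swapping $R_1,R_3$ and fixing a pair among $\{R_2,R_4,R_6\}$; since no two vertices are antipodal, each such involution cannot be a half-turn rotation and must therefore be the reflection across the great circle through its two fixed points. Since both reflections send $R_1$ to $R_3$, both fixed great circles coincide with the unique perpendicular bisector of the chord $R_1R_3$, so $R_2,R_4,R_6$ all lie on a common great circle $\gamma$. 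The symmetric argument using the even deltoids on $\{2,3,4,5\}$ and $\{1,2,4,5\}$ places $R_1,R_3,R_5$ on a common great circle $\delta$.

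Next, I would exploit the lozenge on $\{1,2,3,4\}$ to show $\delta\perp\gamma$. Its four edge lengths being equal in absolute value forces $R_2,R_4$ to lie on the perpendicular bisector of $R_1R_3$, namely $\gamma$, and $R_1,R_3$ to lie on the perpendicular bisector of $R_2R_4$, namely $\delta$. The resulting reflections $\tau_{13}$ across $\gamma$ and $\tau_{24}$ across $\delta$ are genuine symmetries of the lozenge. Both compositions $\tau_{13}\tau_{24}$ and $\tau_{24}\tau_{13}$ agree on $\{R_1,R_2,R_3,R_4\}$; since for a generic realization $R_2$ does not lie on $\delta$, the three points $R_1,R_2,R_3$ are not cocircular and any sphere isometry is determined by its action on them. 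Hence $\tau_{13}\tau_{24}=\tau_{24}\tau_{13}$, and commutativity of two distinct reflections of $S^2$ is equivalent to their axes being perpendicular, so $\delta\perp\gamma$.

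With $R_1,R_3,R_5\in\delta$, $R_2,R_4,R_6\in\gamma$, and $\delta\perp\gamma$, the curve $\curveC$ is a Dixon~$1$ motion. To conclude, I would introduce Dixon~$1$ coordinates $R_i=(x_i,0,z_i)$ for odd $i$ and $R_j=(0,y_j,z_j)$ for even $j$, so that each distance constraint fixes the product $z_iz_j$. Given $R_1,R_2,R_3,R_4$, the value $z_5$ is then determined, leaving the sign of $x_5$ free and yielding two positions for $R_5$; similarly $R_6$ admits two positions. Since the stabilizer in $\SO_3(\R)$ of the generic 4-tuple $(R_1,R_2,R_3,R_4)$ is trivial, the four pairs $(R_5,R_6)$ give four distinct points of $\M_{0,12}$, so $\deg p_{56}=4$, contradicting the Case~\ref{case:dixon1} hypothesis. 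The main delicate step is the commutativity argument in the lozenge: one must check that the locus in $\curveC$ where $R_2$ lies on $\delta$ (so that $R_1,R_2,R_3$ become cocircular and the 3-point determination of isometries fails) is a proper closed subvariety, and then use the closedness of the orthogonality condition on $\curveC$ to propagate $\delta\perp\gamma$ from a general point to the whole motion.
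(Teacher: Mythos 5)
Your proposal is correct in its geometric core and reaches the same intermediate conclusion as the paper (all odd vertices on one great circle $\delta$, all even vertices on an orthogonal great circle $\gamma$), but the route to that conclusion and the way the final contradiction is extracted both differ from the paper's.

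The paper gets the cocircularity and orthogonality much faster: the lozenge $1234$ alone already shows that the great circle through $R_1,R_3$ is the perpendicular bisector of $R_2,R_4$ and vice versa, hence the two diagonals are orthogonal; the even deltoid $1245$ puts $R_5$ on the bisector of $R_2,R_4$ and the odd deltoid $1346$ puts $R_6$ on the bisector of $R_1,R_3$. That is three quadrilaterals. You instead work with four deltoids plus the lozenge and prove orthogonality via a commutativity argument for the two reflections $\tau_{13}$, $\tau_{24}$, which is fine but forces you to worry about the generic-position issue ($R_2\notin\delta$) that you flag yourself. More importantly, the paper closes the argument differently: it observes that the remaining rhomboid $2356$ would then have its diagonals $\wideparen{R_3R_5}\subset\delta$ and $\wideparen{R_2R_6}\subset\gamma$ orthogonal, contradicting Lemma~\ref{lemma:diagonals}. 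You instead assert $\deg p_{56}=4$ by counting Dixon~$1$ completions of $(R_1,\dots,R_4)$. That last step is the weak point of your write-up: $\deg p_{56}$ is the degree of $p_{56}$ restricted to the irreducible component $\curveC$, and a priori the four Dixon completions need not all lie in $\curveC$ (they could be distributed among several components of the configuration curve); establishing that requires an extra irreducibility argument that you do not supply. The cleaner way to finish from where you are is simply to note that, having shown $\curveC$ is a Dixon~$1$ motion, you have contradicted the standing assumption (in force since Proposition~\ref{proposition:nodixon_birational} and the construction of the degree tables) that $\curveC$ is not a Dixon~$1$ motion; the degree-$4$ computation is then unnecessary. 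You should also be explicit, as the paper's classification of quadrilaterals is only up to replacing vertices by antipodes, that the perpendicular bisectors entering your reflections are those of $R_1,\pm R_3$ and $R_2,\pm R_4$ and that the signs can be normalized consistently; this is not hard but it is precisely the kind of bookkeeping the lozenge argument in the paper handles more transparently.
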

\begin{proof}
 The type table implies that $1234$ is a lozenge, $1245$ is an 
even deltoid, and $1346$ is an odd deltoid. This forces the vertices $1$, 
$3$, and~$5$ to be cocircular, and similarly for $2$, $4$, and~$6$; moreover 
the two great circles are orthogonal. However, by assumption $2356$ is a 
rhomboid, which would have orthogonal diagonals. This contradicts 
\cref{lemma:diagonals}.
\end{proof}

\begin{proposition}
\label{proposition:dixon2}
 \Cref{case:dixon2} is an example of a Dixon 2 motion.
\end{proposition}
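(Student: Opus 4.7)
The plan is to construct two additional sphere points $R_7, R_8$ and three involutions $\tau, \sigma, \rho$ of $S^2$ with $\tau\sigma\rho = \id_{S^2}$, so that the resulting 8-point configuration gives a Dixon~2 motion of $K_{4,4}$ whose restriction recovers the given motion of $K_{3,3}$. From the type table of Case~\eqref{case:dixon2} one reads off that the three ``diagonal'' quadrilaterals $\{1,2,3,4\}$, $\{1,2,5,6\}$, and $\{3,4,5,6\}$ are rhomboids, while the other six $K_{2,2}$-subgraphs are general; by Definition~\ref{definition:classification_quadrilaterals} each rhomboid admits an involutive sphere isometry exchanging its two odd and its two even vertices. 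Label them $\tau, \rho, \sigma$ respectively, so that $\tau\colon R_1\leftrightarrow R_3,\, R_2\leftrightarrow R_4$, then $\rho\colon R_1\leftrightarrow R_5,\, R_2\leftrightarrow R_6$, and $\sigma\colon R_3\leftrightarrow R_5,\, R_4\leftrightarrow R_6$. These isometries are attached to the irreducible component~$\curveC$ itself, hence act coherently along the whole motion.

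The crucial intermediate step is to prove $\tau\sigma\rho = \id_{S^2}$. Direct computation yields $\tau\sigma\rho(R_1) = \tau\sigma(R_5) = \tau(R_3) = R_1$ and $\tau\sigma\rho(R_2) = R_2$. A sphere isometry is determined by its action on three generic points, so to finish it suffices to exhibit a third fixed point; I would do this by exploiting the rhomboid structure of $\{3,4,5,6\}$, which forces $\tau\sigma\rho$ to preserve the pair $\{R_3, R_4\}$ setwise, and then invoking the properness hypothesis (no two vertices coincide or are antipodal, and vertices $1,3,5$ are not cocircular by Lemma~\ref{lemma:cocircular_nobirational}) to rule out a non-trivial swap. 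Geometrically, the three involutions are half-turns of $\R^3$ around three mutually orthogonal axes, whose product is the identity. In particular, $\tau, \sigma, \rho$ pairwise commute and generate a Klein four-group.

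I then set $R_7 := \sigma(R_1)$ and $R_8 := \sigma(R_2)$. Commutativity yields $R_7 = \tau(R_5) = \rho(R_3)$ and $R_8 = \tau(R_6) = \rho(R_4)$, so $\tau,\sigma,\rho$ act on the extended set $\{R_1, \ldots, R_8\}$ exactly as in the Dixon~2 definition. It remains to verify that each of the eight new edge distances of $K_{4,4}$ is constant along~$\curveC$; for instance
\[
  d_{S^2}(R_4, R_7) = d_{S^2}\bigl(\sigma(R_6), \sigma(R_1)\bigr) = d_{S^2}(R_1, R_6),
\]
which is constant because $\{1,6\}$ is an edge of~$K_{3,3}$. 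The remaining seven new distances are handled analogously, applying the appropriate involution to convert them into lengths of existing edges. The principal obstacle throughout is the identity $\tau\sigma\rho = \id_{S^2}$: a priori the three involutions coming from three independent rhomboids need not be compatible, and it is precisely the fact that the three rhomboids share all six vertices pairwise that forces the Klein four-group relation.
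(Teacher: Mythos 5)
Your outline has the right shape --- extract one involution from each of the three rhomboids $1234$, $1256$, $3456$, prove $\tau\sigma\rho = \id_{S^2}$, and then build the $K_{4,4}$ extension --- and your bookkeeping at the end (checking the new $K_{4,4}$ edge lengths are constant) is fine. But the central step is exactly where the gap is, and it is more than a detail to be filled in.

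The issue is your claim that the three involutions are ``half-turns of $\R^3$ around three mutually orthogonal axes.'' By Definition~\ref{definition:classification_quadrilaterals}, the configuration curve of a rhomboid has two non-degenerate irreducible components $\mcal{Y}_1$ and $\mcal{Y}_2$, and the involutive symmetry preserving the motion is a \emph{rotation} for one component and a \emph{reflection} for the other. For each of the three rhomboid subgraphs, the proper motion $\curveC$ projects to one of these two components, and a priori nothing tells you which. If the number of reflections among $\tau,\sigma,\rho$ is odd, then $\tau\sigma\rho$ is orientation-reversing; since it fixes the two non-antipodal points $R_1,R_2$, it must be the reflection in the great circle through them, not the identity. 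Your fallback --- using the rhomboid $3456$ and properness to ``rule out a non-trivial swap'' on $\{R_3,R_4\}$ --- does not close this: you cannot compute $\tau\sigma\rho(R_3)$, since $\rho(R_3)$ is not determined by the rhomboid data, and the properness hypothesis alone does not distinguish a reflection fixing $R_1,R_2$ from the identity. (Note that if you \emph{could} establish that all three are rotations, you would not need a third fixed point at all: a non-identity rotation of $S^2$ has exactly two antipodal fixed points, so a rotation fixing two non-antipodal points is already the identity.)

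The paper's proof is devoted precisely to ruling out the bad parities. It reduces the $4^3$ possibilities for the types of the three rhomboids to four, $(\text{Type }k,\text{Type }k,\text{Type }k)$ for $k=1,\dots,4$, using the automorphisms $\sigma_i$ of $\M_{0,12}$ that swap $P_i \leftrightarrow Q_i$, and then uses the $\mu$-numbers of Table~\ref{table:mu} --- degrees of intersection of $\curveC$ with the boundary divisors --- to show that only the all-Type-1 (all-rotation) case yields a consistent system of equations for the $\mu(i,T_1T_2T_3)$'s. This bond-theoretic computation is the real content of the proposition and cannot be replaced by the elementary sphere-geometry reasoning you sketch; without it, the identity $\tau\sigma\rho=\id_{S^2}$ is simply unestablished.
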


\begin{proof}
 In this case we have three rhomboids, namely $1234$, $1256$ and $3456$, 
 and the motions of the other $K_{2,2}$-subgraphs are general ones. 
 At first sight, we would need to analyze $4^3 = 64$ cases, 
 since each of the rhomboids may have type from~$1$ to~$4$. 
 We employ some automorphisms of~$\M_{0,12}$ to decrease the total number of cases to analyze. 
 For a given $i \in \{1, \dotsc, 6\}$, 
 consider the automorphism~$\sigma_i$ of~$\M_{0,12}$ that swaps~$P_i$ and~$Q_i$ and fixes the other marked points. 
 This automorphism does not, in general, preserve the proper motion~$\curveC$ of~$K_{3,3}$ we fixed once and for all. 
 However, it preserves the fact that $1234$, $1256$ and $3456$ are rhomboids, as well as the types of the other quadrilaterals. 
 Hence we can employ it to simplify our setting. 
 For example, consider the automorphism~$\sigma_1$ swapping $P_1 \leftrightarrow Q_1$ and its action on $\curveC_{56} \subseteq \M_{0,8}$. 
 A direct computation shows that the divisors~$\divou$ and~$\divom$ are preserved, 
 while~$\divem$ is mapped to~$\overline{\diveu}$. 
 Thus the type of the rhomboid $1234$ is changed by~$\sigma_1$ as follows:
\[
  \text{Type 1} \leftrightarrow \text{Type 3},
  \qquad
  \text{Type 2} \leftrightarrow \text{Type 4}.
\]
Instead, if we use the automorphism~$\sigma_2$ swapping $P_2 \leftrightarrow Q_2$, 
we get that the type of $1234$ changes as follows:
\[
  \text{Type 1} \leftrightarrow \text{Type 2},
  \qquad
  \text{Type 3} \leftrightarrow \text{Type 4}.
\]
Therefore, here we have an action of the group $\mathbb{G} = \left\langle \sigma_1, \dotsc, \sigma_6 \right\rangle \cong (\Z_2)^6$ on the set of triples of types of the rhomboids $1234$, $1256$, and $3456$. 
We then use this action to simplify the type of the three rhomboids as much as possible. 
We see that the action is not transitive, since every triple admits as stabilizer the subgroup $\left\langle \mathrm{id}, \sigma_1 \circ \sigma_3 \circ \sigma_5, \sigma_2 \circ \sigma_4 \circ \sigma_6 \right\rangle$, which has cardinality~$4$. 
There are then four triples of types of rhomboids that are not equivalent under~$\mathbb{G}$, and these are:
\[
\begin{array}{ll}
 (\text{Type 1}, \text{Type 1}, \text{Type 1}), & (\text{Type 2}, \text{Type 2}, \text{Type 2}), \\
 (\text{Type 3}, \text{Type 3}, \text{Type 3}), & (\text{Type 4}, \text{Type 4}, \text{Type 4}).
\end{array}
\]
We want to show that only the situation where all the three rhomboids are of Type~1 can happen. 
To do so, we first consider the equations for the numbers $\mu(i, T_1 T_2 T_3)$ 
coming from taking the pullbacks under the maps~$p_{ij}$ of the divisors~$\divou$ and~$\divom$ on the subgraphs of the three rhomboids. 
For example, the pullback $p_{56}^{\ast}(\divou)$ equals the sum of the four divisors $D_{1, PPQ}$, $D_{1,PPP}$, $D_{3, QQP}$, and $D_{3, QQQ}$, 
and we get the equation:
\[
  \mu(1, PPQ) + \mu(1, PPP) + \mu(3, QQP) + \mu(3, QQQ) = 
  \mu^{56}(\cutou) \cdot \deg p_{56} \,.
\]
When all the three rhomboids are of Type 1 we have $\mu(\cutou) = 0$ and $\mu(\cutom) = 1$, 
and so we get the system of equations:
\begin{gather*}
 \mu(1, PPQ) = \mu(1, PPP) = \mu(3, QQQ) = \mu(3, QQP) = 0 \,, \\
 \mu(1, PQP) = \mu(1, PPP) = \mu(5, QQQ) = \mu(5, QPQ) = 0 \,, \\
 \mu(3, QPP) = \mu(3, PPP) = \mu(5, QQQ) = \mu(5, PQQ) = 0 \,,
\end{gather*}
and
\begin{gather*}
 \mu(1, PQQ) + \mu(1, PQP) + \mu(3, QPQ) + \mu(3, QPP) = 2 \,, \\
 \mu(1, PQQ) + \mu(1, PPQ) + \mu(5, QQP) + \mu(5, QPP) = 2 \,, \\
 \mu(3, QPQ) + \mu(3, PPQ) + \mu(5, QQP) + \mu(5, PQP) = 2 \,.
\end{gather*}
Recalling from \cref{definition:mu_K33} that we have equalities $\mu(1, PPQ) = \mu(1, QQP)$ and similar ones, 
we see that these equations admit the unique solution
\[
 \mu(1, PQQ) = \mu(3, QPQ) = \mu(5, QQP) = 1 
\]
(and all other non-conjugated quantities are zero).
One can then check that this solution satisfies also all similar equations coming from considering other subgraphs in~$K_{3,3}$ that are isomorphic to~$K_{2,2}$ and that are not rhomboids. Instead, if all the three rhomboids are of Type 2, the situation is different. In that case we have $\mu(\cutou) = 1$ and $\mu(\cutom) = 0$, and the system of equations coming from the three rhomboids has solution
\[
 \mu(1,PPP)=\mu(3,PPP)=\mu(5,PPP)=1 \,.
\]
This solution, however, does not satisfy all the equations coming from the other subgraphs. For example, if we consider the subgraph $1236$, we have that the pullback $p_{45}^{\ast}(\divou)$ equals the sum of the four divisors $D_{1, PPQ}$, $D_{1,PQQ}$, $D_{3, QPP}$, and $D_{3, QQP}$. Hence the subgraph determines the equation
\[
  \mu(1, PPQ) + \mu(1, PQQ) + \mu(3, QPP) + \mu(3, QQP) = 
  \mu^{45}(\cutom) \cdot \deg p_{45} = 1 \,,
\]
which is not satisfied by the previous solution. Hence the situation where all the three rhomboids are of Type 2 cannot occur. The same argument shows that neither Type 4 can occur. To exclude Type 3, instead of taking the divisors~$\divou$ and~$\divom$, we consider~$\diveu$ and~$\divem$.
Therefore we can suppose that all the three rhomboids are of Type 1. Then we know that there exist three rotations $o_{1234}$, $o_{1256}$, and~$o_{3456}$ of the sphere~$S^2$, each of which is a symmetry of the corresponding rhomboid. Moreover, as we clarified in \Cref{K33:quadrilaterals}, these three rotations are related to automorphisms $\tau_{56}$, $\tau_{34}$, and $\tau_{12}$ of~$\M_{0,12}$ that preserve the proper motions of the rhomboids. Since $\tau_{56} \circ \tau_{34} = \tau_{12}$, the same relation holds for the rotations, and so these three rotations commute and are involutions. Thus we are in the situation of a Dixon~2 motion.
\end{proof}

To describe the motion that we find in \Cref{case:dixon3}, 
let us recall a duality in spherical geometry between pairs of antipodal points and great circles. 
Given a pair of antipodal points, we can consider the plane orthogonal to the line connecting these two points: 
we say that the great circle determined by this plane on~$S^2$ is the dual to the original pair of points. 
Conversely, the axis of a great circle determines a pair of antipodal points.

\begin{proposition}
\label{proposition:new_dixon}
 In \Cref{case:dixon3}, there exists a proper motion of~$K_{3,3}$ described as follows. 
 We consider a quadrilateral~$1234$ with three sides of equal spherical length~$(1-a)/2$, 
 and the fourth of length~$(1+a)/2$. 
 This quadrilateral has the property that, while it moves, the angle between its diagonals stays the same. 
 We pick as realizations of~$5$ and $6$ the duals of the diagonals of~$1234$. 
 By duality, the spherical distance~$(1-e)/2$ between the realizations of~$5$ and~$6$ 
 is the cosine of the angle between the diagonals of~$1234$. 
 We call this motion a \emph{constant diagonal angle motion}.
\end{proposition}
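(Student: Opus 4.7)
The plan is to construct the motion explicitly and verify everything. First I place vertices $1, 2, 3, 4$ as a spherical quadrilateral with $\langle 1, 2 \rangle = \langle 2, 3 \rangle = \langle 1, 4 \rangle = a$ and $\langle 3, 4 \rangle = -a$, so that three sides have spherical length $(1-a)/2$ and the fourth has length $(1+a)/2$. A dimension count (eight parameters for four points on $S^{2}$, minus three for $\SO_{3}(\R)$, minus four edge constraints) shows that, modulo rotations, the configuration space is one-dimensional.

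The crucial step is to prove that the angle $\omega$ between the diagonals, viewed as the great circles through $1, 3$ and through $2, 4$, is constant along this family. Writing $c = \langle 1, 3 \rangle$ and $e = \langle 2, 4 \rangle$, the Lagrange identity gives
\[
 \langle 1 \times 3,\, 2 \times 4 \rangle
 = \langle 1, 2 \rangle \langle 3, 4 \rangle - \langle 1, 4 \rangle \langle 2, 3 \rangle
 = -2a^{2},
\]
so the numerator of $\cos \omega$ is a constant. For the denominator $|1 \times 3| \cdot |2 \times 4| = \sqrt{(1 - c^{2})(1 - e^{2})}$, the chosen edge lengths imply the orthogonality relations $\langle 1 + 3, 4 \rangle = 0$ and $\langle 1 - 3, 2 \rangle = 0$. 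Expanding $2$ and $4$ in an orthonormal frame built from $1+3$, $1-3$ and their common perpendicular, a short computation produces the identity
\[
 (1 - c^{2})(1 - e^{2}) \;=\; 1 - (1 - 2a^{2})^{2} \;=\; 4a^{2}(1 - a^{2}),
\]
so the denominator is also constant. Hence $\cos \omega = -a/\sqrt{1 - a^{2}}$ is a constant of the motion.

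Next I define vertex $5$ as a continuously chosen pole of the great circle through $2$ and $4$, and vertex $6$ as a continuously chosen pole of the great circle through $1$ and $3$; this is the reading of \emph{duals of the diagonals} consistent with the bipartition of $K_{3,3}$. By construction $\langle 5, 2 \rangle = \langle 5, 4 \rangle = \langle 6, 1 \rangle = \langle 6, 3 \rangle = 0$, so the four $K_{3,3}$-edges $5$--$2$, $5$--$4$, $6$--$1$, $6$--$3$ all have constant spherical length $\frac{1}{2}$; the edge $5$--$6$ has constant length determined by $\cos \omega$; and the four quadrilateral edges are constant by construction. This yields a one-parameter family of real realizations of $K_{3,3}$ with fixed edge lengths, and for generic $a \in (-1,0) \cup (0,1)$ no two vertices coincide or are antipodal, so the motion is proper.

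Finally, I verify that this proper motion realizes Case~\eqref{case:dixon3} by checking the nine $K_{2,2}$-subgraphs against Definition~\ref{definition:classification_quadrilaterals}: the subgraphs $1245$ and $2345$ are even deltoids (with coefficients $\alpha = 1$ and $\alpha = -1$), the subgraphs $1346$ and $1236$ are odd deltoids (with $\alpha = -1$ and $\alpha = 1$), and the subgraphs $1234$, $1256$, $1456$, $2356$, $3456$ are all general, matching the prescribed type table. The only nontrivial computational content of this plan is the polynomial identity $(1 - c^{2})(1 - e^{2}) = 4a^{2}(1 - a^{2})$; the main obstacle is setting up an orthonormal frame that makes this short calculation transparent, and once the identity is in hand, every other check is routine.
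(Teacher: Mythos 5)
Your proof is correct and takes a genuinely different — and arguably more satisfying — route than the paper's. The paper first derives the constrained configuration from the type table and then verifies mobility and the relation between $a$ and $e$ via a Gr\"obner-basis elimination (the authors explicitly say they could not find a ``geometric'' proof that $e$ is constant). Your approach \emph{is} essentially that geometric proof: the Binet--Cauchy (Lagrange) identity $\langle 1\times 3, 2\times 4\rangle = \langle 1,2\rangle\langle 3,4\rangle - \langle 1,4\rangle\langle 3,2\rangle = -2a^2$ fixes the numerator of $\cos\omega$, and the orthogonality relations $\langle 1+3,4\rangle = \langle 1-3,2\rangle = 0$ give, via the orthonormal frame built from $1\pm 3$, the polynomial identity $(1-c^2)(1-e^2) = 4a^2(1-a^2)$, which I checked and which is correct. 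Your derived relation $\cos^2\omega = a^2/(1-a^2)$ agrees with the paper's eliminant $a^3e^2 + a^3 - ae^2 = 0$ after dividing by $a$. What the paper's approach buys is uniformity with the rest of Section~4 (it derives the coordinate constraints from the type table, so the argument simultaneously shows that any Case~(4) motion must look like this); what yours buys is conceptual transparency and no reliance on symbolic computation, and it directly answers the challenge posed in the paper.

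Two small points worth tightening. First, the range of the parameter: for the configuration space of $1234$ to be real and one-dimensional you need not only $a\neq 0$ but also $a^2 < 1/2$ — otherwise the interval $[2a^2-1,\,1-2a^2]$ of admissible values of $c = \langle 1,3\rangle$ is empty; saying ``generic $a\in(-1,0)\cup(0,1)$'' is therefore slightly too broad. Second, for completeness one should note that the dual of a great circle is only defined up to sign, so when you write ``a continuously chosen pole'' you are making a consistent branch choice for $5$ and $6$ along the motion; this is fine, but it is why $\langle 5,6\rangle = \pm\cos\omega$ rather than simply $\cos\omega$. Neither of these affects the validity of the argument.
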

\begin{proof}
 The last row and the last column of the type table imply that 
 certain values of the lengths~$\delta_{S^2}$ must be zero. 
 The two even deltoids in the last column, namely $2345$ and $1245$, 
 imply that we have the following situation, where the edges are labeled by the value of the function~$\delta_{S^2}$, 
 and $\alpha, \beta \in \{1, -1\}$:
 \begin{center}
	  \begin{tikzpicture}[quad1/.style={Apricot},quad2/.style={Aquamarine},quad3/.style={Fuchsia!40!white},scale=0.75]
	    \begin{scope}
	      \node[vertex,label=left:1] (1) at (-2,0) {};
	      \node[vertex,label=above:2] (2) at (0,2) {};
	      \node[vertex,label=left:3] (3) at (0.2,0) {};
	      \node[vertex,label=below:4] (4) at (0,-2) {};
	      \node[vertex,label=right:5] (5) at (2.5,0) {};
	      \draw[edge] (1)to node[above left,quad3] {$a$} (2)
										(1)to node[below left,quad3] {$\alpha a$} (4)
										(3)to node[left,quad1] {} (2)
										(3)to node[pos=0.3,left,quad1] {} (4)
										(3)to node[right,quad2] {$b$} (2)
										(3)to node[right,quad2] {$\beta b$} (4)
										(2)to node[below left,quad2] {$c$} (5)
										(4)to node[above=5pt,quad2] {$\beta c$} (5)
										(5)to node[above right,quad3] {$c$} (2)
										(5)to node[below right,quad3] {$\alpha c$} (4);
				\draw[quad1,line width=0.75] ($(1)+(0.1,0)$) -- ($(2)-(0.0707107,0.16)$) -- ($(3)-(0.1,0)$) -- ($(4)+(-0.0707107,0.16)$) -- ($(1)+(0.1,0)$) -- cycle;
				\draw[quad2,line width=0.75] ($(5)-(0.1,0)$) -- ($(2)-(-0.0707107,0.16)$) -- ($(3)+(0.1,0)$) -- ($(4)+(0.0707107,0.16)$) -- ($(5)-(0.1,0)$) -- cycle;
				\draw[quad3,line width=0.75] ($(1)-(0.16,0)$) -- ($(2)+(0,0.16)$) -- ($(5)+(0.16,0)$) -- ($(4)-(0,0.16)$) -- ($(1)-(0.16,0)$) -- cycle;
	    \end{scope}
	\end{tikzpicture}
\end{center}
By swapping the realization of vertex~$4$ with its antipode, we can suppose without loss of generality that $\delta_{S^2}(1,4) = a$, so $\alpha = 1$ in the diagram.
This forces $\delta_{S^2}(3,4) = -b$, so $\beta = -1$ in the diagram, because otherwise $1234$ would be \caseE, while from the table we know that it is \caseG.
Hence $c = 0$. 

A similar argument, using the last row of the type table, implies that $\delta_{S^2}(1,6) = \delta_{S^2}(3,6) = 0$. Moreover, since $1346$ is \caseO, we have $\delta_{S^2}(1,4) = \pm \delta_{S^2}(3,4)$, so $a = \pm b$. By swapping the realization of vertex~$3$ we can assume that $a = b$. The situation for the whole $K_{3,3}$ is then as follows, where $\delta_{S^2}(5,6) = e$:
 \begin{center}
	  \begin{tikzpicture}
	    \begin{scope}[scale=0.9]
	      \node[vertex,label=left:1] (1) at (-2,0) {};
	      \node[vertex,label=above:2] (2) at (0,2) {};
	      \node[vertex,label=left:3] (3) at (0.2,0.2) {};
	      \node[vertex,label=below:4] (4) at (0,-2) {};
	      \node[vertex,label=right:5] (5) at (2.5,0) {};
	      \node[vertex,label=right:6] (6) at (2,-2) {};
	      \draw[edge] (1)to node[above left] {$a$} (2)
										(1)to node[below left] {$a$} (4)
										(3)to node[left] {$a$} (2)
										(3)to node[pos=0.2,left] {$-a$} (4)
										(5)to node[above right] {$0$} (2)
										(5)to node[pos=0.3, right, below] {$0$} (4)
										(5)to node[below right] {$e$} (6)
										(3)to node[pos=0.3, right] {$0$} (6)
										(1)to node[pos=0.8, left, below] {$0$} (6);
	    \end{scope}
	\end{tikzpicture}
\end{center}
Due to the general type of several quadrilaterals, we know that $e \neq a$ and $e \neq -a$.

The fact that this kind of realizations of~$K_{3,3}$ is mobile and that, during the motion, the angle between the diagonals of~$1234$ stays constant (namely, the distance~$e$ stays constant), will be proven via symbolic computation techniques. In particular, the fact that $e$ is constant follows from the fact that $a$ and $e$ satisfy an algebraic relation. We challenge the reader to find a ``geometric'' proof (we could not find one). 

We set up a symbolic computation by Gr\"obner bases that will give us the result. We assume that $R_1, \dotsc, R_6 \in S^2$ are the realizations on the sphere of the vertices of~$K_{3,3}$. Without loss of generality, we can assume that $R_1 = (1,0,0)$ and $R_6 = (0,1,0)$, while we keep symbolic coordinates $(x_i, y_i, z_i)$ for all other points $R_2, \dotsc, R_5$. These $12$ variables satisfy the sphere equations 
\[
 x_i^2 + y_i^2 + z_i^2 = 1 
 \qquad
 \text{for } i \in \{2,3,4,5\} \,.
\]
More equations come from the pieces of information we have about the lengths~$\delta_{S^2}$: from $\delta_{S^2}(3,6) = 0$ we derive $y_3 = 0$, and from $\delta_{S^2}(5,6) = e$ we get $y_5 = e$. Similarly, we obtain $x_2 = x_4 = a$. We have four other equations:
\begin{align*}
 x_4 x_5 + y_4 y_5 + z_4 z_5 &= 0 \,, \\
 x_2 x_5 + y_2 y_5 + z_2 z_5 &= 0 \,, \\
 x_3 x_4 + y_3 y_4 + z_3 z_4 &= -a \,,\\
 x_2 x_3 + y_2 y_3 + z_2 z_3 &= a \,.
\end{align*}
Altogether, we have $12$ equations in $14$ variables. 
Via Gr\"obner bases we see that their solution space $\mcal{B} \subseteq \C^{14}$ has dimension~$2$. 
If we eliminate the variables $\{ x_i, y_i, z_i \}_{i=2}^{5}$ from these equations, 
we obtain the relation $a^3 e^2 + a^3 - a e^2$ between the parameters~$a$ and~$e$, 
which defines a curve $\mcal{A} \subseteq \C^2$. 
Hence we have a map $\mcal{B} \longrightarrow \mcal{A}$ which, by dimension reasons, must have infinite fibers. 
This implies that there are mobile instances of~$K_{3,3}$ in this case.
\end{proof}

\begin{example}
\begin{figure}
\centering
  \includegraphics[width=0.2\textwidth]{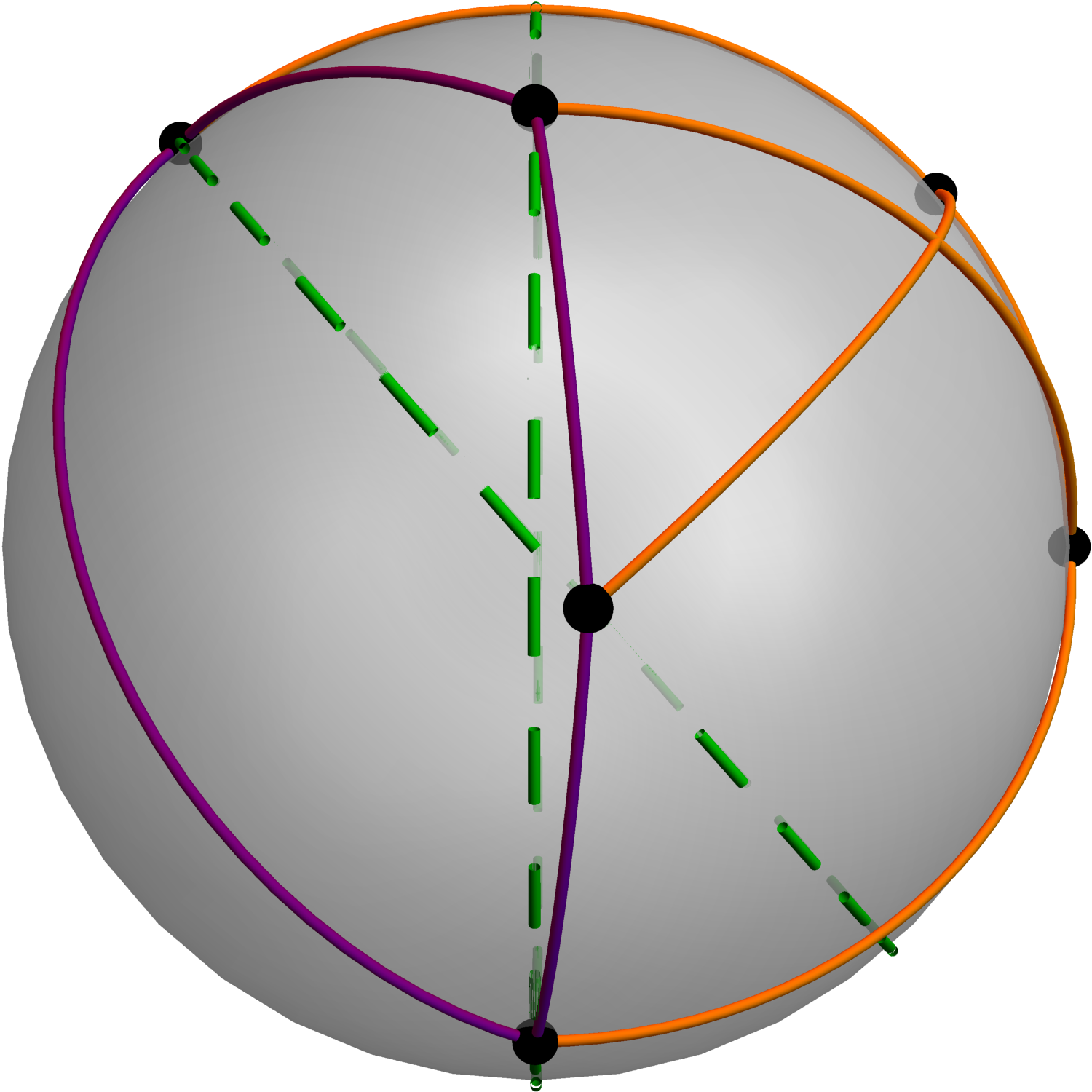} 
  \includegraphics[width=0.2\textwidth]{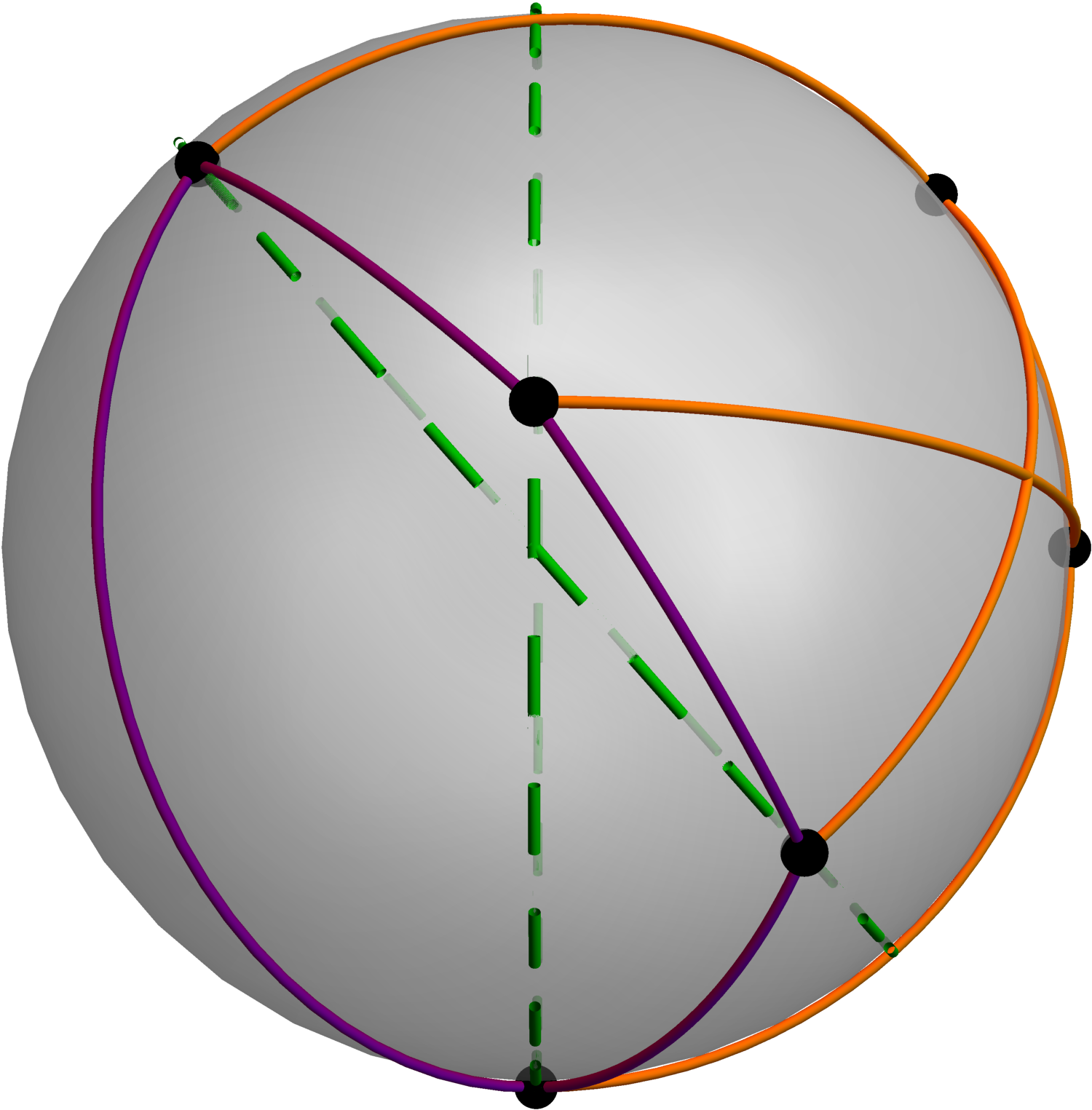} 
  \includegraphics[width=0.2\textwidth]{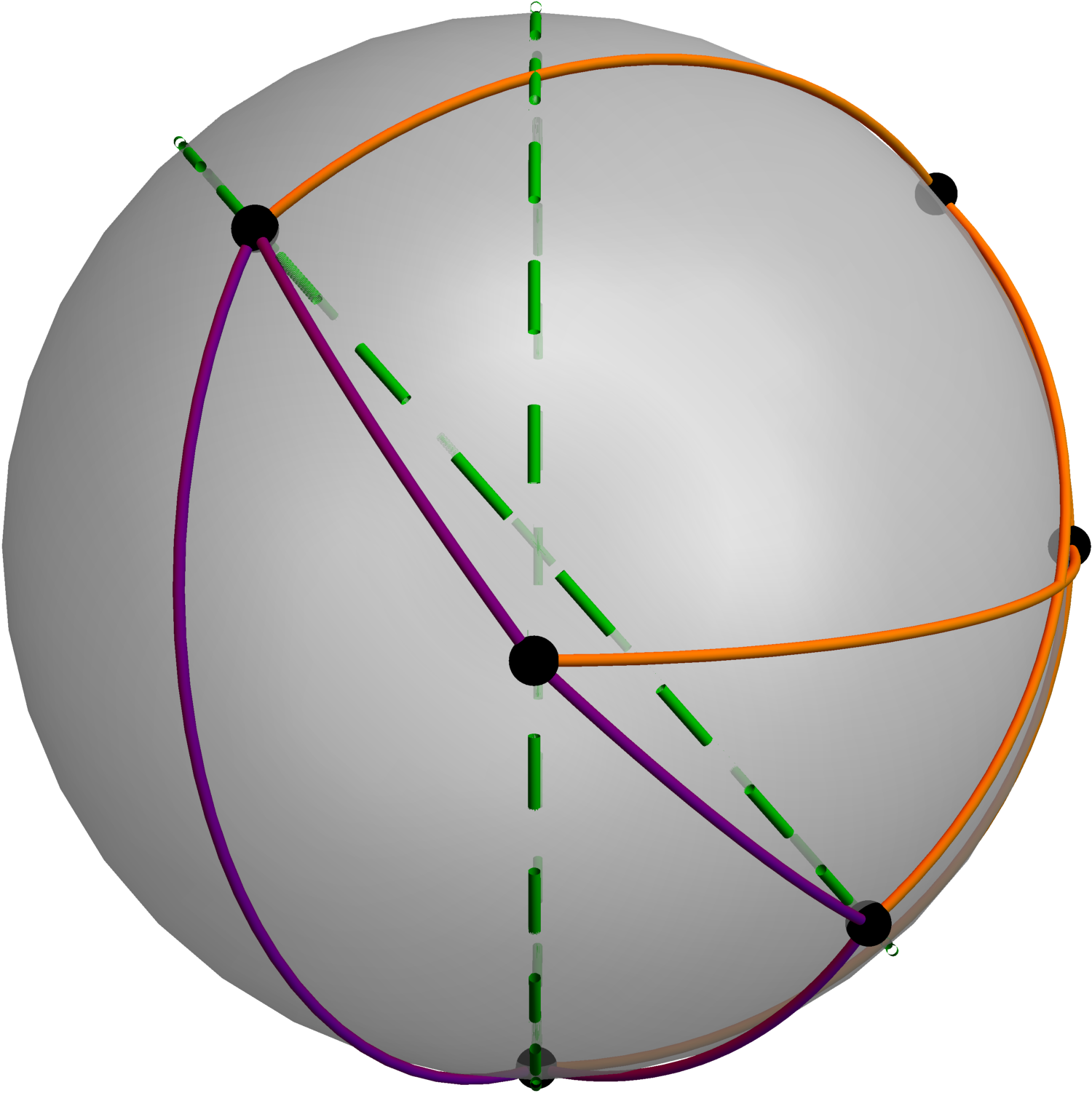} 
  \includegraphics[width=0.2\textwidth]{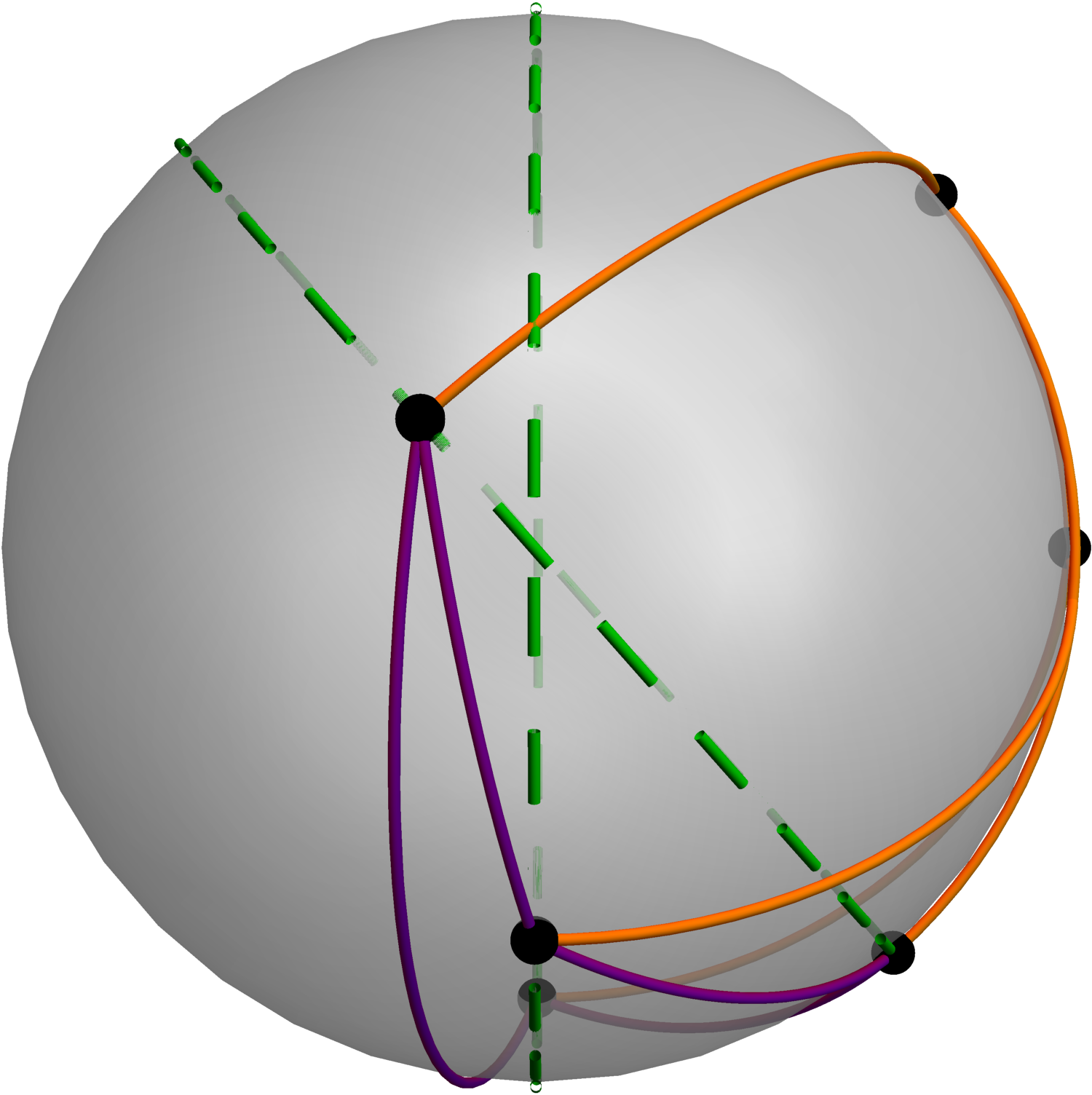} 
%
\caption{Visualization of four realizations of~$K_{3,3}$ during a constant diagonal angle motion. 
The diagonals (in green) and their poles on the silhoutte circle remain fixed.
The cosine of the orange edge between the poles is $\frac{3}{4}$.
The angles of the other orange edges are $\frac{\pi}{2}$.
The cosines of the purple edges are $\frac{3}{5}, \frac{3}{5}, \frac{3}{5}$ and $-\frac{3}{5}$.  
}
\label{figure:const_angle_frames}
\end{figure}
 If, in the construction provided by \cref{proposition:new_dixon}, 
we set $a = 3/5$ and $e = 3/4$, we can take the following (radical) 
parametrization of the constant diagonal angle motion:
\[
\left\{
\begin{aligned}
 R_1 &= (1, 0, 0), \\
 R_i &= \bigl( x_i(t), y_i(t), z_i(t) \bigr) \quad \text{for } i \in \{2,3,4,5\}, \\
 R_6 &= (0, 1, 0),
\end{aligned}
\right.
\]
where
\begin{align*}
 y_3(t) &= 0, &
 y_5(t) &= 3/4, \\
 x_2(t) &= 3/5, &
 x_4(t) &= 3/5,
\end{align*}
\begin{align*}
 x_3(t) &= \frac{2t}{t^2+1}, &
 z_3(t) &= \frac{t^2-1}{t^2+1}, \\
 z_2(t) &= \frac{3}{5} \cdot \frac{t-1}{t+1}, &
 z_4(t) &= -\frac{3}{5} \cdot \frac{t+1}{t-1},
\end{align*}
\begin{align*}
 y_2(t) &= \pm \frac{\sqrt{(t+7)(7t+1)}}{5 t+5}, \\
 z_5(t) &= \frac{-5 y_2 t^2 + 5 y_2 \pm \sqrt{25 t^4 y_2^2 - 50 t^2 y_2^2 - 72 t^3 + 25 y_2^2 - 72 t}}{8 (t^2+1)}, \\
 x_5(t) &= \frac{t (16 z_5^2 + 9)}{8 z_5 (t^2-1)}, \\
 y_4(t) &= y_2 + \frac{8 (t^2+1) z_5}{5 (t^2-1)}.
\end{align*}
\Cref{figure:const_angle_frames} shows some realizations of~$K_{3,3}$ during this motion.
\end{example}

Now that the analysis is concluded, we sum up the results we have obtained in the following theorem:

\begin{theorem}
\label{theorem:classification_K33}
 All the possible (real) motions of~$K_{3,3}$ on the sphere, for which no two vertices 
coincide or are antipodal, are the following: 
 \begin{itemize}
  \item spherical Dixon 1;
  \item spherical Dixon 2;
  \item constant diagonal angle motion.
 \end{itemize}
\end{theorem}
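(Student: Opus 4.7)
The plan is to assemble the theorem from the case analysis developed throughout Section~\ref{K33}. Given a proper motion~$\curveC$ of~$K_{3,3}$, the first dichotomy is provided by Lemma~\ref{lemma:dixon}: either some forgetful map~$p_{ij}$ has degree at least~$3$, in which case $\curveC$ is a spherical Dixon~$1$ motion, or every $p_{ij}$ has degree at most~$2$. In the latter situation, Proposition~\ref{proposition:nodixon_birational} forces all the maps~$p_i$ to be birational, so the motion is completely controlled by the collection of degrees~$\{\deg p_{ij}\}$.

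Next, I would invoke Lemma~\ref{lemma:determined} and Remark~\ref{remark:same_degree_q} to argue that the degree table~$\{\deg p_{ij}\}$ determines all the degrees~$\{\deg q_i^k\}$ and $\{\deg r_i^{k\ell}\}$, and therefore prescribes the type (\caseG, \caseO, \caseE, \caseR, or \caseL) of each of the nine subgraphs of~$K_{3,3}$ isomorphic to~$K_{2,2}$, producing a type table. The constraint from Proposition~\ref{proposition:allowed} on which rows and columns can occur in a type table, combined with a direct enumeration over the $26$ orbits of degree tables under $(S_3\times S_3)\rtimes \Z_2$, narrows the possibilities to exactly the four cases~\eqref{case:general}--\eqref{case:dixon3} listed above.

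Finally, the four cases are resolved one by one. Case~\eqref{case:general} is excluded by Proposition~\ref{proposition:general}, because the linear system for the numbers $\mu(i,T_1T_2T_3)$ obtained from the cuts $\divom$ on all nine quadrilaterals has no integer solution. Case~\eqref{case:dixon1} is ruled out by Proposition~\ref{proposition:dixon1} via Lemma~\ref{lemma:diagonals}, which forbids rhomboids with orthogonal diagonals. Case~\eqref{case:dixon2} is shown in Proposition~\ref{proposition:dixon2} to be exactly a spherical Dixon~$2$ motion: after reducing the $64$ possible assignments of rhomboid types by the action of the involutions~$\sigma_i$ and eliminating all but Type~$1$, the three rhomboid symmetries yield commuting involutive isometries satisfying $\tau\sigma\rho = \id$. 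Case~\eqref{case:dixon3} is identified in Proposition~\ref{proposition:new_dixon} as the constant diagonal angle motion, constructed explicitly by placing the realizations of vertices~$5$ and~$6$ at the poles of the diagonals of the quadrilateral~$1234$. Combining these four outcomes with the Dixon~$1$ alternative from the very first step gives the three-fold classification claimed in the theorem.

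The main obstacle in this plan is not a single hard step but rather keeping the bookkeeping honest: the reduction from ``proper motion'' to ``degree table'' to ``type table'' to ``four cases'' must be shown to be exhaustive, and the exclusion of spurious type tables via Proposition~\ref{proposition:allowed} relies on the somewhat delicate case-by-case geometric arguments about quadrilaterals. Once this combinatorial skeleton is in place, the theorem follows by invoking Propositions~\ref{proposition:general}, \ref{proposition:dixon1}, \ref{proposition:dixon2}, and~\ref{proposition:new_dixon} for the four surviving cases, together with Lemma~\ref{lemma:dixon} for the remaining Dixon~$1$ branch.
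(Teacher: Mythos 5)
Your proposal correctly reconstructs the paper's argument: the Dixon~1 branch via Lemma~\ref{lemma:dixon}, the reduction to degree/type tables via Proposition~\ref{proposition:nodixon_birational} and Lemma~\ref{lemma:determined}, the enumeration over orbits constrained by Proposition~\ref{proposition:allowed}, and the resolution of the four surviving cases via Propositions~\ref{proposition:general}, \ref{proposition:dixon1}, \ref{proposition:dixon2}, and~\ref{proposition:new_dixon}. This is exactly how Section~\ref{K33} builds to the theorem, which the paper presents as a summary of the preceding analysis rather than a standalone proof.
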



\begin{thebibliography}{10}

\bibitem{Alexandrov1997}
Victor Alexandrov.
\newblock An example of a flexible polyhedron with nonconstant volume in the
  spherical space.
\newblock {\em Beitr\"{a}ge zur Algebra und Geometrie}, 38(1):11--18, 1997.

\bibitem{Bottema1960}
Oene Bottema.
\newblock {Die Bahnkurven eines merkw\"urdigen Zw\"olfstabgetriebes}.
\newblock {\em \"Osterreichisches Ingenieur-Archiv}, 14(3):218--222, 1960.

\bibitem{Corinaldi2018}
David Corinaldi, Massimo Callegari, and Jorge Angeles.
\newblock Singularity-free path-planning of dexterous pointing tasks for a
  class of spherical parallel mechanisms.
\newblock {\em Mechanism and Machine Theory}, 128:47--57, 2018.
\newblock \href {http://dx.doi.org/10.1016/j.mechmachtheory.2018.05.006}
  {\path{doi:10.1016/j.mechmachtheory.2018.05.006}}.

\bibitem{Dixon1899}
Alfred~C. Dixon.
\newblock {On certain deformable frameworks}.
\newblock {\em Messenger}, 29(2):1--21, 1899.

\bibitem{Eftekhari2018}
Yaser Eftekhari, Bill Jackson, Anthony Nixon, Bernd Schulze, Shin-ichi
  Tanigawa, and Walter Whiteley.
\newblock Point-hyperplane frameworks, slider joints, and rigidity preserving
  transformations.
\newblock {\em Journal of Combinatorial Theory, Series B}, 135:48--74, 2019.
\newblock \href {http://dx.doi.org/10.1016/j.jctb.2018.07.008}
  {\path{doi:10.1016/j.jctb.2018.07.008}}.

\bibitem{Essomba2018}
Terence Essomba and Linh~Nguyen Vu.
\newblock Kinematic analysis of a new five-bar spherical decoupled mechanism
  with two-degrees of freedom remote center of motion.
\newblock {\em Mechanism and Machine Theory}, 119:184--197, 2018.
\newblock \href {http://dx.doi.org/10.1016/j.mechmachtheory.2017.09.010}
  {\path{doi:10.1016/j.mechmachtheory.2017.09.010}}.

\bibitem{Gaifullin2014}
Alexander~A. Gaifullin.
\newblock Flexible cross-polytopes in spaces of constant curvature.
\newblock {\em Proceedings of the Steklov Institute of Mathematics},
  286(1):77--113, 2014.
\newblock \href {http://dx.doi.org/10.1134/S0081543814060066}
  {\path{doi:10.1134/S0081543814060066}}.

\bibitem{Gaifullin2015}
Alexander~A. Gaifullin.
\newblock Embedded flexible spherical cross-polytopes with nonconstant volumes.
\newblock {\em Proceedings of the Steklov Institute of Mathematics},
  288(1):56--80, 2015.
\newblock \href {http://dx.doi.org/10.1134/S0081543815010058}
  {\path{doi:10.1134/S0081543815010058}}.

\bibitem{GalletGraseggerSchicho}
Matteo Gallet, Georg Grasegger, and Josef Schicho.
\newblock {Counting realizations of Laman graphs on the sphere}.
\newblock {\em Electronic Journal of Combinatorics}, 27(2):1--18, Art. P2.5,
  2019.
\newblock \href {http://dx.doi.org/10.37236/8548} {\path{doi:10.37236/8548}}.

\bibitem{Gibson1988a}
Christopher~G. Gibson and Jon~M. Selig.
\newblock {Movable hinged spherical quadrilaterals---I}.
\newblock {\em Mechanism and Machine Theory}, 23(1):13--18, 1988.
\newblock \href {http://dx.doi.org/10.1016/0094-114X(88)90004-3}
  {\path{doi:10.1016/0094-114X(88)90004-3}}.

\bibitem{Gibson1988b}
Christopher~G. Gibson and Jon~M. Selig.
\newblock {Movable hinged spherical quadrilaterals---II singularities and
  reductions}.
\newblock {\em Mechanism and Machine Theory}, 23(1):19--24, 1988.
\newblock \href {http://dx.doi.org/10.1016/0094-114X(88)90005-5}
  {\path{doi:10.1016/0094-114X(88)90005-5}}.

\bibitem{Grasegger2018}
Georg Grasegger, Jan Legersk{\'y}, and Josef Schicho.
\newblock {Graphs with Flexible Labelings}.
\newblock {\em Discrete {\&} Computational Geometry}, 62(2):461--480, 2019.
\newblock \href {http://dx.doi.org/10.1007/s00454-018-0026-9}
  {\path{doi:10.1007/s00454-018-0026-9}}.

\bibitem{Grasegger2018a}
Georg Grasegger, Jan Legersk{\'y}, and Josef Schicho.
\newblock Graphs with flexible labelings allowing injective realizations.
\newblock {\em Discrete Mathematics}, 343(6):1--14, Art. 111713, 2020.
\newblock \href {http://dx.doi.org/10.1016/j.disc.2019.111713}
  {\path{doi:10.1016/j.disc.2019.111713}}.

\bibitem{Hegedues2015}
G\'abor {Heged\"us}, Zijia {Li}, Josef {Schicho}, and Hans-Peter {Schr\"ocker}.
\newblock {The theory of bonds II: Closed 6R linkages with maximal genus}.
\newblock {\em {Journal of Symbolic Computation}}, 68:167--180, 2015.
\newblock \href {http://dx.doi.org/10.1016/j.jsc.2014.09.035}
  {\path{doi:10.1016/j.jsc.2014.09.035}}.

\bibitem{Hegedues2013}
G\'abor {Heged\"us}, Josef {Schicho}, and Hans-Peter {Schr\"ocker}.
\newblock {The theory of bonds: A new method for the analysis of linkages}.
\newblock {\em Mechanism and Machine Theory}, 70:407--424, 2013.
\newblock \href {http://dx.doi.org/10.1016/j.mechmachtheory.2013.08.004}
  {\path{doi:10.1016/j.mechmachtheory.2013.08.004}}.

\bibitem{Izmestiev}
Ivan Izmestiev.
\newblock {Projective background of the infinitesimal rigidity of frameworks.}
\newblock {\em {Geometriae Dedicata}}, 140:183--203, 2009.
\newblock \href {http://dx.doi.org/10.1007/s10711-008-9339-9}
  {\path{doi:10.1007/s10711-008-9339-9}}.

\bibitem{Keel1992}
Sean Keel.
\newblock Intersection theory of moduli space of stable {$n$}-pointed curves of
  genus zero.
\newblock {\em Transactions of the American Mathematical Society},
  330(2):545--574, 1992.
\newblock \href {http://dx.doi.org/10.2307/2153922}
  {\path{doi:10.2307/2153922}}.

\bibitem{Knudsen1983}
Finn~F. Knudsen.
\newblock The projectivity of the moduli space of stable curves. {II}. {T}he
  stacks {$M_{g,n}$}.
\newblock {\em Mathematica Scandinavica}, 52(2):161--199, 1983.
\newblock \href {http://dx.doi.org/10.7146/math.scand.a-12001}
  {\path{doi:10.7146/math.scand.a-12001}}.

\bibitem{Laman1970}
Gerard Laman.
\newblock On graphs and rigidity of plane skeletal structures.
\newblock {\em Journal of Engineering Mathematics}, 4:331--340, 1970.
\newblock \href {http://dx.doi.org/10.1007/BF01534980}
  {\path{doi:10.1007/BF01534980}}.

\bibitem{Li2018}
Zijia Li, Josef Schicho, and Hans-Peter Schr{\"o}cker.
\newblock A survey on the theory of bonds.
\newblock {\em IMA Journal of Mathematical Control and Information},
  35(1):279--295, 2018.
\newblock \href {http://dx.doi.org/10.1093/imamci/dnw048}
  {\path{doi:10.1093/imamci/dnw048}}.

\bibitem{Nawratil2011}
Georg Nawratil.
\newblock Reducible compositions of spherical four-bar linkages with a
  spherical coupler component.
\newblock {\em Mechanism and Machine Theory}, 46(5):725--742, 2011.
\newblock \href {http://dx.doi.org/10.1016/j.mechmachtheory.2010.12.004}
  {\path{doi:10.1016/j.mechmachtheory.2010.12.004}}.

\bibitem{Nawratil2012}
Georg Nawratil.
\newblock Reducible compositions of spherical four-bar linkages without a
  spherical coupler component.
\newblock {\em Mechanism and Machine Theory}, 49:87--103, 2012.
\newblock \href {http://dx.doi.org/10.1016/j.mechmachtheory.2011.11.003}
  {\path{doi:10.1016/j.mechmachtheory.2011.11.003}}.

\bibitem{Nawratil2010}
Georg Nawratil and Helmut Stachel.
\newblock Composition of spherical four-bar-mechanisms.
\newblock In Doina Pisla, Marco Ceccarelli, Manfred Husty, and Burkhard Corves,
  editors, {\em New Trends in Mechanism Science}, pages 99--106, Dordrecht,
  2010. Springer Netherlands.

\bibitem{NixonOwenPower2012}
Anthony Nixon, John~C. Owen, and Stephen~C. Power.
\newblock Rigidity of frameworks supported on surfaces.
\newblock {\em SIAM Journal on Discrete Mathematics}, 26(4):1733--1757, 2012.
\newblock \href {http://dx.doi.org/10.1137/110848852}
  {\path{doi:10.1137/110848852}}.

\bibitem{NixonOwenPower2014}
Anthony Nixon, John~C. Owen, and Stephen~C. Power.
\newblock A characterization of generically rigid frameworks on surfaces of
  revolution.
\newblock {\em SIAM Journal on Discrete Mathematics}, 28(4):2008--2028, 2014.
\newblock \href {http://dx.doi.org/10.1137/130913195}
  {\path{doi:10.1137/130913195}}.

\bibitem{NixonRoss2014}
Anthony Nixon and Elissa Ross.
\newblock {One brick at a time: a survey of inductive constructions in rigidity
  theory}.
\newblock In {\em {Rigidity and symmetry}}, pages 303--324. Springer, New York,
  2014.
\newblock \href {http://dx.doi.org/10.1007/978-1-4939-0781-6_15}
  {\path{doi:10.1007/978-1-4939-0781-6_15}}.

\bibitem{Geiringer1927}
Hilda {Pollaczek-Geiringer}.
\newblock {\"Uber die Gliederung ebener Fachwerke}.
\newblock {\em {Zeitschrift f\"ur Angewandte Mathematik und Mechanik (ZAMM)}},
  7:58--72, 1927.
\newblock \href {http://dx.doi.org/10.1002/zamm.19270070107}
  {\path{doi:10.1002/zamm.19270070107}}.

\bibitem{Ross2014}
Elissa Ross.
\newblock {The rigidity of periodic body-bar frameworks on the
  three-dimensional fixed torus}.
\newblock {\em {Philosophical Transactions of the Royal Society of London.
  Series A. Mathematical, Physical and Engineering Sciences}}, 372(2008):23,
  2014.
\newblock \href {http://dx.doi.org/10.1098/rsta.2012.0112}
  {\path{doi:10.1098/rsta.2012.0112}}.

\bibitem{Ross2015}
Elissa Ross.
\newblock {Inductive constructions for frameworks on a two-dimensional fixed
  torus}.
\newblock {\em {Discrete \& Computational Geometry}}, 54(1):78--109, 2015.
\newblock \href {http://dx.doi.org/10.1007/s00454-015-9697-7}
  {\path{doi:10.1007/s00454-015-9697-7}}.

\bibitem{SaliolaWhitely}
Franco~V. Saliola and Walter Whiteley.
\newblock Some notes on the equivalence of first-order rigidity in various
  geometries.
\newblock Technical report, arXiv, 2007.
\newblock \href {http://arxiv.org/abs/0709.3354} {\path{arXiv:0709.3354}}.

\bibitem{Stachel2001}
Hellmuth Stachel.
\newblock {Flexible Cross-Polytopes in the Euclidean 4-Space}.
\newblock {\em Journal for Geometry and Graphics}, 4(2):159--167, 2001.

\bibitem{Stachel2006}
Hellmuth Stachel.
\newblock {\em Flexible Octahedra in the Hyperbolic Space}, pages 209--225.
\newblock Springer US, Boston, MA, 2006.
\newblock \href {http://dx.doi.org/10.1007/0-387-29555-0_11}
  {\path{doi:10.1007/0-387-29555-0_11}}.

\bibitem{Stachel2013}
Helmuth Stachel.
\newblock On the flexibility and symmetry of overconstrained mechanisms.
\newblock {\em Philosophical Transactions of the Royal Society of London A:
  Mathematical, Physical and Engineering Sciences}, 372, 2013.
\newblock \href {http://dx.doi.org/10.1098/rsta.2012.0040}
  {\path{doi:10.1098/rsta.2012.0040}}.

\bibitem{Sun2018}
Jianwei Sun, Wenrui Liu, and Jinkui Chu.
\newblock Synthesis of spherical four-bar linkage for open path generation
  using wavelet feature parameters.
\newblock {\em Mechanism and Machine Theory}, 128:33--46, 2018.
\newblock \href {http://dx.doi.org/10.1016/j.mechmachtheory.2018.05.008}
  {\path{doi:10.1016/j.mechmachtheory.2018.05.008}}.

\bibitem{Walter2007}
Dominic Walter and Manfred~L. Husty.
\newblock On a nine-bar linkage, its possible configurations and conditions for
  paradoxical mobility.
\newblock In {\em 12th World Congress on Mechanism and Machine Science, IFToMM
  2007}, 2007.

\bibitem{Wunderlich1976}
Walter Wunderlich.
\newblock {On deformable nine-bar linkages with six triple joints}.
\newblock {\em Indagationes Mathematicae (Proceedings)}, 79(3):257--262, 1976.
\newblock \href {http://dx.doi.org/10.1016/1385-7258(76)90052-4}
  {\path{doi:10.1016/1385-7258(76)90052-4}}.

\end{thebibliography}
\end{document}